\documentclass[12pt,a4paper]{article}

\usepackage{tensor}

\usepackage{etoolbox}
\usepackage{mathtools}

\BeforeBeginEnvironment{cases}{\displaystyle}
\BeforeBeginEnvironment{cases}{}

\usepackage{amssymb,amsmath,amsthm,epsfig}
\numberwithin{equation}{section}
\numberwithin{figure}{section}
\numberwithin{table}{section}

\DeclareMathOperator\supp{supp}
\usepackage{mathrsfs}

\usepackage[table,xcdraw]{xcolor}

\usepackage{latexsym, enumerate}
\usepackage{eepic}
\usepackage{epic}
\usepackage{color}
\usepackage{ifpdf}

\usepackage[numbers,sort&compress]{natbib}
\usepackage{natbib}

\usepackage{verbatim}
\usepackage{titletoc}

\usepackage{dsfont}
\usepackage{multirow}
\usepackage{makecell}
\usepackage{bm}
\usepackage{epstopdf}
\usepackage{graphicx}
\usepackage{subfig}

\usepackage{tikz}

\graphicspath{{Figures/}{Figures/figure/}{Figures/adrIP/}{Figures/twoDimDatasets/}{Figures/locatesource/}{Figures/inverseopenarc/}{Figures/beam/}}

\usepackage{hyperref}
\hypersetup{hypertex=true,
	colorlinks=true,
	linkcolor=blue,
	filecolor=blue,
	urlcolor=blue,
	citecolor=blue,
	bookmarksopen=true,
}
\usepackage{soul}
\usepackage{xcolor}
\usepackage{color}
\sethlcolor{green}
\soulregister\cite7 
\soulregister\citep7 
\soulregister\citet7 
\soulregister\ref7 
\soulregister\pageref7 

\usepackage[toc,page]{appendix}  

\usepackage{booktabs}
\usepackage{caption}

\usepackage{algorithm}
\usepackage{algorithmicx}
\usepackage{algpseudocode}

\theoremstyle{plain}
\newtheorem{lem}{Lemma}[section]
\newcommand{\Thmref}[1]{Theorem \ref{#1}}
\newtheorem{thm}[lem]{Theorem}

\theoremstyle{definition}

\theoremstyle{remark}

\newcommand{\norm}[1]{\lVert #1 \rVert}

\begin{document}
	\title{\LARGE\bf  Vanishing Clutter: Fast and Accurate Shape Imaging via Active Cloaking}
	
	\author{
		Haoqiang Xiao\thanks
		{School of Mathematics, Hunan University, Changsha 410082, China.
			Email: xiaohaoqiang@hnu.edu.cn}
		\and
		Guang-Hui Zheng\thanks
		{School of Mathematics, Hunan Provincial Key Laboratory of Intelligent Information Processing and Applied Mathematics, Hunan University, Changsha 410082, China.
			Email: zhenggh2012@hnu.edu.cn (Corresponding author)}
	}
	
	\date{}
	\maketitle
\begin{center}{\bf ABSTRACT}
\end{center}\smallskip
Imaging a target sample in near-field scanning optical microscopy (NSOM) is fundamentally limited by measurement artifacts and data contamination from multiple probe–sample scattering. The probe, essential for subwavelength resolution, inherently perturbs the local field, degrading the signal-to-noise ratio and rendering the inverse problem for quantitative shape reconstruction highly ill‑posed. We first establish the well‑posedness of the corresponding forward model, providing a rigorous foundation for subsequent imaging. We then re‑purpose active cloaking—conventionally the antagonist of imaging—as an enabling mechanism to eliminate probe‑induced interference. Rather than directly reconstructing the sample from corrupted data, we actively cloak the probe by formulating an optimal control problem and prove the existence and stability of its minimizers. Leveraging the theory of localized anomalous resonance in layered plasmonic structures, we derive an exact closed‑form minimizer, thereby circumventing the computationally prohibitive iterative solution of the optimal control problem. The resulting cloaking‑driven interference removal yields a virtually probe‑free measurement environment, enabling fast, artifact‑free shape reconstruction. Extensive numerical experiments demonstrate accurate shape reconstruction and dramatic acceleration—often by orders of magnitude—over conventional iterative methods applied directly to probe‑contaminated data without cloaking‑based preprocessing, validating the robustness and transformative potential of the proposed approach for high‑fidelity subwavelength imaging.

\smallskip
\textbf{Keywords:}Shape imaging; active cloaking; local anomalous resonance; layer potential; optimal control.

\section{Introduction}
Near-field scanning optical microscopy (NSOM) has emerged as a cornerstone technique for subwavelength imaging, overcoming the diffraction limit of far-field optics. However, a fundamental challenge persists: the very probe that enables the capture of highly localized electromagnetic fields intrinsically and severely perturbs them. As demonstrated by Alù and Engheta \cite{Alu2010} and Bernal Arango et al. \cite{BernalArango2022}, the proximity of a bare near-field probe to a sample can drastically distort the local field distribution—standing waves form, surface plasmon polaritons are scattered, and the measured signal no longer faithfully represents the isolated sample. This probe-sample coupling introduces spurious “clutter” or artifacts into the measurement data, which contaminates the imaging process, reduces the effective signal-to-noise ratio, and fundamentally limits the achievable resolution and quantitative accuracy. In applications employing multiple probes, such as dual-scanning NSOM (DSNOM) for independent excitation and collection \cite{Fujimoto2012} or combined atomic force and optical microscopy \cite{Berezin2014}, these mutual scattering interactions become even more complex and debilitating. Consequently, the reliable recovery of a sample's true shape and material parameters from such corrupted near-field data is a highly ill-posed inverse problem, where small measurement errors can lead to disproportionately large artifacts in the reconstructed image.

To make our discussion concrete, we now introduce a precise mathematical formulation that models this multilayer coated probe near-field imaging scenario. Building upon the concept of a plasmonic cloaking layer for a single tip \cite{Alu2010,Bilotti2011} and generalizing it to multi-layered configurations, we represent the imaging apparatus as a collection of cylindrical structures. As schematically depicted in Fig. 1, the configuration consists of a penetrable sample \( D_1 \) to be imaged, a probe \( D_2 \), and a functional cloaking coating shell \( D_3 \) such that \( D_2 \subset D_3 \) and \( D_1 \cap D_3 = \emptyset \). While real probes and samples have complex three-dimensional geometries, in this work, the domains \( D_1 \), \( D_2 \), and \( D_3 \) are modeled as cylindrical inclusions, making them effectively two-dimensional domains assumed to be of class \( C^{1,\eta} \) for some \( 0<\eta<1 \). This idealization is perfectly suited for analyzing layered core-shell nanoprobes and their interactions with a sample target. The sample \( D_1 \) and the inner probe core \( D_2 \) are made of normal dielectric materials with positive permittivities \( \varepsilon_1 > 0 \) and \( \varepsilon_2 > 0 \), respectively. The background medium has a constant permittivity \( \varepsilon_m > 0 \). The cloaking layer \( D_3 \setminus \overline{D_2} \) is filled with a metamaterial whose frequency-dispersive permittivity is \( \varepsilon_3 = -s + i\delta \), where \( s>0 \) and the loss parameter \( \delta>0 \) serve as a Drude-like model for a plasmonic shell in the visible or infrared regime. This modeling directly connects to the layered metal-dielectric structures used for near-perfect scattering suppression \cite{Paria2019,Shalin2015}. Collectively, the permittivity distribution is
\[ \varepsilon(x) = \varepsilon_1 \chi(D_1) + \varepsilon_2 \chi(D_2) + \varepsilon_3 \chi(D_3 \setminus \overline{D_2}) + \varepsilon_m \chi(\mathbb{R}^2 \setminus (\overline{D_1 \cup D_3})). \]
Under a quasi-static illumination by an incident potential \( u^i(x) = x \cdot d \) with a unit vector \( d \), the total electric potential \( u \) satisfies the transmission problem
\begin{align}
\begin{cases}
\nabla \cdot (\varepsilon \nabla u) = 0 \quad & \text{in } \mathbb{R}^{2} \setminus (\partial D_{1} \cup \partial D_{2} \cup \partial D_{3}), \\
u|_{+} = u|_{-} \quad & \text{on } \partial D_{1} \cup \partial D_{2} \cup \partial D_{3}, \\
\varepsilon_{m} \frac{\partial u}{\partial \nu} \big|_{+} = \varepsilon_{1} \frac{\partial u}{\partial \nu} \big|_{-} \quad & \text{on } \partial D_{1}, \\
\varepsilon_{3} \frac{\partial u}{\partial \nu} \big|_{+} = \varepsilon_{2} \frac{\partial u}{\partial \nu} \big|_{-} \quad & \text{on } \partial D_{2}, \\
\varepsilon_{m} \frac{\partial u}{\partial \nu} \big|_{+} = \varepsilon_{3} \frac{\partial u}{\partial \nu} \big|_{-} \quad & \text{on } \partial D_{3}, \\
(u - u^{i})(x) = O(|x|^{-1}) \quad & \text{as } |x| \to \infty,
\end{cases} \label{Model}
\end{align}
where \( \nu \) denotes the outward unit normal. This configuration effectively serves as a mathematical surrogate for a multi-layered near-field probe, where the role of the “clutter” is played by the unwanted scattering signature of the coated probe itself ($D_3$).

     \begin{figure}[ht]
     	\centering
     	\begin{tikzpicture}[
     		region/.style={draw, thick, fill=#1!20}
     		]
     		
     		\filldraw[region=blue] (2,0) circle (1.8);
     		
     		\filldraw[region=red] (2,0) circle (0.8);
     		

          \filldraw[region=green] plot [smooth cycle, tension=0.8] coordinates {
         (-2.2,0.4) (-1.5,0.3) (-0.8,0.3)
         (-0.9,-0.2) (-1.2,-0.5) (-1.7,-0.5)
          };

     		\node at (-1.5,0) {$D_{1},\varepsilon_1$};  
     		\node at (2,0) {$D_{2},\varepsilon_2$};      
     		\node at (2,1.2) {$D_{3},\varepsilon_3$};    
     		
     		\node at (4,2) {$\varepsilon_m$};
     		\draw[->, thick, black] (-4,2) -- (-2.5,1) node[midway, above] {$u^i$};
     	\end{tikzpicture}
     \caption{ Scattering of an incident wave $u^{i}$ by a system of a sample($D_{1}$),the probe($D_{2}$) and the shell ($D_{3}\backslash \overline{D_{2}}$).} \label{fig1}
     \end{figure}
With this model, the forward problem is to determine the total field \( u \) from the incident field \( u^{i} \), given the geometry and permittivities of the sample and the multi-layered probe. The inverse problem of primary interest is to recover the shape and location of the unknown sample \( D_1 \) from measurements of the total field \( u \) on a remote circle \( \partial B_R \), assuming that the probe geometry and parameters are known. This inverse shape problem is severely ill-posed: small measurement errors are amplified dramatically, and the presence of the highly scattering, resonant probe shell \( D_3 \) exacerbates instability, with the probe's field leaking into and corrupting the sample's signature. Our strategy is to first actively cloak the probe assembly \( D_2 \cup D_3 \) so that its scattered field is suppressed and the measurement interference is eliminated, effectively revealing \( D_1 \) as if it were probed by an ideal, invisible sensor.

The classical framework for tackling such an inverse shape problem is built upon the theory of layer potentials and the systematic use of asymptotic expansions of scattered fields. Foundational work established a comprehensive toolkit—polarization and moment tensors—enabling high-resolution reconstruction of small inclusions from boundary measurements \cite{Ammari2007}. Subsequently, a rich collection of mathematical methods for multistatic imaging was developed \cite{Ammari2013b}, along with algorithms for the simultaneous reconstruction of shape and impedance parameters \cite{Cakoni2014} and the extraction of fine geometric details from spectral data \cite{Ruiz2018}. While these methods are effective in relatively benign environments, they are not designed to handle the severe data contamination introduced by a resonant, multi-layered probe in the near-field. Directly applying iterative optimization schemes in this context requires solving the forward problem at each step for a complex coupled system involving the sample and the probe. The resulting cost landscape is riddled with local minima caused by the probe's dominant and distorting scattered field, making convergence to the true sample shape difficult and computationally prohibitive. This fundamental limitation underscores a critical need: rather than attempting to computationally “see through” the probe's scattering interference, one should ask whether the probe’s influence can be eliminated physically from the data—before reconstruction is even attempted.

This question naturally connects to the rapidly maturing field of cloaking. Since the pioneering works on transformation optics \cite{Leonhardt2006,Pendry2006} and the fundamental studies on non-uniqueness in inverse problems \cite{Greenleaf2003,Greenleaf2009}, a variety of cloaking mechanisms have been developed, including change-of-variable cloaking \cite{Kohn2008} and acoustic counterparts \cite{Chen2007,Cummer2007,Zhao2026}. A particularly compelling mechanism is cloaking by anomalous localized resonance (ALR), first proposed \cite{Milton2006} and later placed on a rigorous spectral footing through the analysis of the Neumann–Poincaré operator \cite{Ammari2013a,Ando2016,DengLiu2024,Kohn2014}. Crucially, the ALR phenomenon is intimately tied to the geometry of layered, plasmonic structures—exactly the multi-layered core-shell geometry used to model cloaked near-field probes \cite{Alu2010,Bilotti2011}. These theoretical foundations have been extended to elastodynamic contexts \cite{Li2023} and continue to push the boundaries of bandwidth limits \cite{Hu2026}.

Yet, cloaking and imaging have historically been viewed as antagonists: cloaking is designed to conceal, whereas imaging strives to reveal. The deliberate application of cloaking to the sensor itself, as a means to purify measurements, remains an underexplored paradigm shift. This paper introduces a conceptual and mathematical framework that reverses this dichotomy. Instead of cloaking the target to hide it, we cloak the near-field probe(s)—the source of measurement clutter—rendering them invisible and free of artifacts, exactly as if the tip were not there \cite{Alu2010}. In this way, active cloaking is transformed from a concealment tool into a powerful, physics-driven preprocessor that purifies the scattering environment before imaging.

The central contributions of this work are as follows:
\begin{itemize}

    \item We subvert the intrinsic antagonism between imaging and cloaking by actively hiding the probe, and thereby first establish a rigorous analytical and computational framework for this paradigm, turning cloaking into a computationally direct tool for fast, artifact‑free near‑field shape reconstruction.

    \item We formulate the active suppression of the probe's spurious scattering signature as an optimal control problem where the control is the permittivity of the shell $D_{3}\backslash \overline{D_{2}}$. We prove the existence and stability of minimizers, providing a rigorous basis for active probe cloaking.

    \item By exploiting the spectral theory of the Neumann–Poincaré operator associated with the layered probe’s interfaces, we derive an exact closed-form minimizer of the optimal control problem under suitable conditions on the permittivity and geometry. This analytical result not only eliminates the need for any iterative solution but also unveils the intrinsic connection between two distinct cloaking strategies—one formulated via optimal control, and the other rooted in localized anomalous resonance.

    \item We establish the well-posedness of the forward scattering problem in the presence of both the sample \( D_1 \) and the coated probe \( D_3 \). We prove uniqueness, existence, and stability of the solution to \eqref{Model}, thereby providing a rigorous mathematical foundation that is a prerequisite for both the optimal control formulation and the subsequent imaging of the sample.

    \item After the probe is cloaked, the residual scattered field corresponds to that of the isolated sample \( D_1 \) in a homogeneous medium. We develop a fast, high-accuracy shape reconstruction algorithm that recovers \( D_1 \) directly from the decloaked data. Numerical experiments, performed on single, multi-layered, and multi-probe geometries, demonstrate orders-of-magnitude acceleration over conventional iterative inversion applied to contaminated data, while preserving high fidelity and robustness.
\end{itemize}
These contributions collectively establish active cloaking as a rigorous and enabling preprocessor for probe-artifact-free near-field imaging.

The remainder of this paper is organized as follows. Section 2 introduces preliminaries on layer potentials and the theory of anomalous localized resonance and cloaking. Section 3 establishes the well-posedness of the forward problem. Section 4 develops the active probe cloaking framework, presenting both the optimal control formulation and the resonance-based closed-form scheme. Section 5 is devoted to the fast shape reconstruction algorithm and numerical experiments. Finally, Section 6 concludes with a discussion of extensions.

\section{Preliminary}
\subsection{Layer potential}
	We denote by $G(x,y)$ the Green function for the Laplacian in the free space. We have
	\[G(x,y) = \frac{1}{2 \pi} \log|x-y|.\]
	\par We assume that the bounded domain D is the open complement of an unbounded domain of class of $C^{1,\eta}$ in $\mathbb{R}^{2}$ for $0<\eta<1$. The single potential $\mathcal{S}_{D}$ is given by
	\[ \mathcal{S}_{D}[\varphi](x) = \int_{\partial D} G(x,y) \varphi(y)d\sigma(y), \quad x \in \mathbb{R}^{2}.\]
	The Neumann-Poincar\'e (NP) operator $\mathcal{K}^{*}_{D}$ is defined by
	\[ \mathcal{K}^{*}_{D}[\varphi](x) = \int_{\partial D} \frac{\partial G(x,y)}{\partial \nu(x)}
	 \varphi(y)d\sigma(y), \quad x \in \partial D,\]
	 where $\nu(x)$ denotes the outward unit normal of $D$.
	 we have the following jump relations:
	 \begin{align}
	 	\mathcal{S}_{D}[\varphi]\big|_{+}(x) &= \mathcal{S}_{D}[\varphi]\big|_{-}(x), \quad x \in \partial D, \label{jumprelation1} \\
	 	\frac{\partial \mathcal{S}_{D}[\varphi] }{\partial \nu} \Big|_{\pm}(x) &= \Big(\pm \frac{1}{2}I + \mathcal{K}^{*}_{D} \Big)
	 	[\varphi](x) , \quad x \in \partial D. \label{jumprelation2}
	 \end{align}

    We denote by $H^{-1/2}(\partial D)$  the dual space of $H^{1/2}(\partial D)$. The duality pairing of $H^{-1/2}$ and $H^{1/2}$ is written as $\langle \cdot,\cdot \rangle$. Let $H^{-1/2}_{0}(\partial D)$ be the space of $\varphi \in H^{-1/2}(\partial D)$ satisfying $\langle \varphi,1 \rangle = 0$. We define $\mathcal{H}^{*}(\partial D)$ to be $H^{-1/2}_{0}(\partial D)$ equipped with the inner product given by
    \[\langle \varphi,\phi \rangle_{\mathcal{H}^{*}(\partial D)} = -\langle \varphi,
    \mathcal{S}_{D}[\phi] \rangle\] 
    for $\varphi,\phi \in H^{-1/2}_{0}(\partial D)$. The norm $\norm{\cdot}_{\mathcal{H}^{*}(\partial D)}$ induced by this inner product is equivalent to
    the $H^{-1/2}_{0}(\partial D)$ norm(see \cite{Ando2016}).

    The NP operator $\mathcal{K}^{*}_{D}$ is self-adjoint and compact in $\mathcal{H}^{*}(\partial D)$. Its spectrum is discrete and lies within the interval $(-\frac{1}{2},\frac{1}{2}]$. Furthermore, the free-space Green function $G(x,y)$ admits the following spectral expansion:
    \[G(x,y) = -\sum_{j=0}^{\infty}\mathcal{S}_{D}[\varphi_{j}](x) \mathcal{S}_{D}[\varphi_{j}](y)
    + \mathcal{S}_{D}[\varphi_{0}](x) \quad for \ x\in \mathbb{R}^{2}\backslash\overline{D}\ and \ y \in \overline{D},\]
    where $\varphi_{j},j=1,2,\cdots$, are eigenfunctions of $\mathcal{K}^{*}_{D}$ on $\mathcal{H}^{*}(\partial D)$ and $\varphi_{0}$ is an eigenfunction associated to the eigenvalue
    $\frac{1}{2}$. Then for any $\varphi \in \mathcal{H}^{*}(\partial D)$, we have
    \begin{align}
    	\mathcal{S}_{D}[\varphi](x) &= \sum_{j=1}^{\infty}\mathcal{S}_{D}[\varphi_{j}](x)
    	\langle \varphi,\varphi_{j}\rangle_{\mathcal{H}^{*}(\partial D)} +  \mathcal{S}_{D}[\varphi_{0}]\int_{\partial D} \varphi(y)d\sigma(y) \nonumber \\
    	&=\sum_{j=1}^{\infty}\mathcal{S}_{D}[\varphi_{j}](x)
    	\langle \varphi,\varphi_{j}\rangle_{\mathcal{H}^{*}(\partial D)}. \label{GreenFunctionSpectralExpan} 
    \end{align}
    For more details on the properties of the NP operator and $\mathcal{H}^{*}(\partial D)$, we
    refer to \cite{Ammari2007}.

\subsection{Anomalous localized resonance and cloaking}
The functionality of the coated probe as a cloaking device is intimately related to the phenomenon of anomalous localized resonance (ALR). To illustrate this mechanism, we temporarily ignore the sample \(D_1\) and focus on the probe–coating system $D_{3}$. For simplicity, we set the background permittivity and the core permittivity to unity, i.e., \(\varepsilon_m=\varepsilon_2=1\), and let the plasmonic shell possess a permittivity \(\varepsilon_3=-1+\mathrm{i}\delta\) with a small loss parameter \(\delta>0\). The limit \(\delta\to0\) corresponds to the ideal plasmonic case where the real part of \(\varepsilon_3\) approaches \(-1\), the negative of the surrounding permittivity.

Let \(f\in L^1(\mathbb{R}^2)\) be a source function with compact support satisfying the charge neutrality condition \(\int_{\mathbb{R}^2}f\,dx=0\). We consider the quasi-static dielectric problem
\[
\nabla\cdot(\varepsilon\nabla V_\delta)=\alpha f\quad\text{in }\mathbb{R}^2,\qquad V_\delta(x)\to0\;\text{as }|x|\to\infty,
\]
where $\alpha$ is a normalisation constant. The energy dissipated in the shell is proportional to
\[
E_\delta=\int_{D_3\setminus D_2}\delta\,|\nabla V_\delta|^2\,dx.
\]

Anomalous localized resonance occurs if, as \(\delta\to0\), the field \(V_\delta\) diverges in a region that extends beyond the shell while remaining bounded outside some sufficiently large radius. Quantitatively, we say that the source \(f\) induces ALR if, for \(\alpha=1\),
\[
E_\delta\to\infty\quad\text{and}\quad |V_\delta(x)|\le C\;\text{for }|x|>a,
\]
with constants \(C,a\) independent of \(\delta\). Under these conditions, renormalising the source to \(\widetilde{f}=f/\sqrt{E_\delta}\) (equivalently, taking \(\alpha=1/\sqrt{E_\delta}\)) yields a solution \(\widetilde{V}_\delta=V_\delta/\sqrt{E_\delta}\) that vanishes in the far field while the dissipated power remains finite. This is the essence of cloaking due to anomalous localized resonance (CALR): the source becomes invisible to an exterior observer. A weaker form, termed weak CALR, occurs when \(\limsup_{\delta\to0}E_\delta=\infty\) together with the far-field boundedness; then invisibility is achieved for a sequence of vanishing loss parameters.

The analysis of CALR for the concentric circular geometry (where \(D_2\) and \(D_3\) are disks of radii \(r_i\) and \(r_e\), respectively) reveals a critical radius
\[
r_*=\sqrt{r_i^3 r_e^{-1}}.
\]
Using layer potential techniques, the problem can be reduced to a singularly perturbed system of non-self-adjoint integral equations, which admits a symmetrisation. It can be shown that if the source \(f\) is supported inside the annular region \(B_{r_*}\setminus \overline{D}_2\) and its Newtonian potential does not extend harmonically into \(B_{r_*}\), then weak CALR takes place; a mild gap condition on the Fourier coefficients of the Newtonian potential guarantees full CALR. Conversely, sources supported outside \(B_{r_*}\) do not exhibit ALR and cannot be cloaked.

In the context of our near-field imaging setup, the sample \(D_1\) acts as a perturbation that can be represented by an effective source located in the vicinity of the probe. By appropriately designing the coating so that this effective source lies inside the critical region, the dominant scattering signature of the probe itself is strongly suppressed through CALR, while the weak signal from the sample remains detectable. This physical picture forms the basis for the ultra-low clutter imaging strategy analyzed in the following sections.
	
\section{Well-posedness of forward peroblem}
We first establish the well-posedness of the forward problem \eqref{Model}, which underlies the cloaking analysis developed in the following sections.
\subsection{Uniqueness}
For our immediate convenience we establish a uniqueness theorem for equation \eqref{Model}. Given $u^{i} =0$, we obtain that $u$ satisfies
	\begin{align}
		u(x) = O(|x|^{-1}), \quad \frac{\partial u}{\partial v} = O(|x|^{-2}),\quad &as \ |x| \to \infty. \label{SRMC}
	\end{align}
    Using the energy estimation method, we obtain the following uniqueness theorem:
    \begin{thm}\label{thm1}
    	Let $\epsilon_{1},\epsilon_{2},\epsilon_{m}>0$ and $\epsilon_{3}=-s+i\delta$ with $s>0$ and
    	$\delta>0$, the solution to equation \eqref{Model} is unique.
    \end{thm}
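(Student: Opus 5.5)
By linearity it suffices to show that the homogeneous problem has only the trivial solution. Let $u$ be the difference of two solutions. Then $u$ satisfies \eqref{Model} with $u^{i}=0$, so $u=O(|x|^{-1})$ and $\partial u/\partial\nu=O(|x|^{-2})$ at infinity as in \eqref{SRMC}. We must show $u\equiv 0$.

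\textbf{Energy identity.} Multiply the equation $\nabla\cdot(\varepsilon\nabla u)=0$ by $\overline{u}$ and integrate over $B_R\setminus(\partial D_1\cup\partial D_2\cup\partial D_3)$, with $R$ large. Integrate by parts in each subdomain: $D_1$, $D_2$, the shell $D_3\setminus\overline{D_2}$, and $B_R\setminus\overline{D_1\cup D_3}$. The continuity of $u$ across each interface, together with the flux transmission conditions on $\partial D_1,\partial D_2,\partial D_3$, makes all interface contributions cancel pairwise. This leaves
\[
\varepsilon_1\int_{D_1}|\nabla u|^2+\varepsilon_2\int_{D_2}|\nabla u|^2+\varepsilon_3\int_{D_3\setminus\overline{D_2}}|\nabla u|^2+\varepsilon_m\int_{B_R\setminus\overline{D_1\cup D_3}}|\nabla u|^2=\varepsilon_m\int_{\partial B_R}\frac{\partial u}{\partial\nu}\,\overline{u}\,d\sigma .
\]
By \eqref{SRMC}, the right-hand side is $O(R^{-3})\cdot 2\pi R\to 0$ as $R\to\infty$. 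The exterior integral then converges to the full exterior Dirichlet energy, which is finite.

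\textbf{Using the loss term.} Take imaginary parts. Since $\varepsilon_1,\varepsilon_2,\varepsilon_m$ are real, the only imaginary contribution is $\delta\int_{D_3\setminus\overline{D_2}}|\nabla u|^2=0$. Because $\delta>0$, this gives $\nabla u=0$ in the shell. Now take real parts: the shell term vanishes since it carries the factor $-s$ times zero. The remaining terms are sums of positive constants times nonnegative integrals, so they vanish individually. Hence $\nabla u=0$ in $D_1$, $D_2$ and in $\mathbb{R}^2\setminus\overline{D_1\cup D_3}$.

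\textbf{Conclusion.} On each connected component, $u$ is therefore constant. The exterior domain is connected and unbounded, and $u=O(|x|^{-1})$ there, so $u=0$ in the exterior. Continuity of traces across $\partial D_1$ and $\partial D_3$ gives $u=0$ in $D_1$ and in the shell. Continuity across $\partial D_2$ then gives $u=0$ in $D_2$. If the shell has several components, each one touches $\partial D_3$ or $\partial D_2$, and the same trace argument applies.

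The only step requiring care is justifying the integration by parts. Solutions are only $H^1_{loc}$ on $C^{1,\eta}$ domains, so the normal derivatives should be read as $H^{-1/2}$ traces and the interface cancellations as identities between duality pairings, which the flux conditions provide. The decay of the boundary term on $\partial B_R$ follows directly from \eqref{SRMC}, so the main content of the proof is the positivity argument: the loss $\delta>0$ forces zero energy in the shell, and only then does the sign-indefinite real part become harmless.
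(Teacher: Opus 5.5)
Your proof is correct and follows the paper's argument: the same energy identity from Green's formula on each subdomain, with the flux conditions cancelling the interface terms, then the imaginary part (using $\delta>0$) kills the shell energy, the real part kills the remaining energies, and the decay condition forces $u=0$. Your treatment of the $\partial B_R$ term as a limit $R\to\infty$ is more careful than the paper's claim that this boundary integral vanishes for large $R$. In fact it equals $-\int_{|x|>R}|\nabla u|^2\,dx$, which only tends to zero.
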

    \begin{proof}
    	Considering the homogeneous form of equation \eqref{Model} with $u^{i}=0$.
    	For the convenience of the proof, we rewrite the total electric potential $u$.
    	\begin{align}
    		u = u \chi(\mathbb{R}^2\backslash\overline{D_1\cup D_3}) + v_1 \chi(D_1) + v_2 \chi(D_2) + v_3 \chi(D_3\backslash\overline{D_2}).
    		\label{Model2}
    	\end{align}
        Then the transmission boundary conditions in equation \eqref{Model} become:
        \begin{align}
        	\begin{cases}
        		u = v_1,\quad \epsilon_m \partial_\nu u = \epsilon_1 \partial_\nu v_1 & \text{on } \partial D_1, \\[1mm]
        		v_3 = v_2,\quad \epsilon_3 \partial_\nu v_3 = \epsilon_2 \partial_\nu v_2 & \text{on } \partial D_2, \\[1mm]
        		u = v_3,\quad \epsilon_m \partial_\nu u = \epsilon_3 \partial_\nu v_3 & \text{on } \partial D_3.
        	\end{cases}
        	\label{TBound}
        \end{align}
    	Let $B_{R}$ be a
    	 circle of radius $R$. Assume that $(D_{1} \cup D_{3}) \subset B_{R}$. Applying Green's theorems over $B_{R}\backslash(\overline{D_{1}\cup D_{3}})$ and equation \eqref{Model} and \eqref{TBound}, we obtain
    	 \begin{align}
    	 	\int_{B_{R}\backslash(\overline{D_{1}\cup D_{3}})}|\nabla u|^{2}dx &=\int_{\partial B_{R}} \overline{u} \frac{\partial u}{\partial \nu}d\sigma - \int_{\partial D_{1}} \bar{u} \frac{\partial u}{\partial \nu}d\sigma - \int_{\partial D_{3}} \bar{u} \frac{\partial u}{\partial \nu}d\sigma \nonumber \\
    		&= \int_{\partial B_{R}} \overline{u} \frac{\partial u}{\partial \nu}d\sigma - \frac{\epsilon_{1}}{\epsilon_{m}} \int_{\partial D_{1}} \overline{v_{1}} \frac{\partial  v_{1}}{\partial \nu}d\sigma
    	    - \frac{\epsilon_{3}}{\epsilon_{m}}\int_{\partial D_{3}} \overline{v_{3}} \frac{\partial v_{3}}{\partial \nu}d\sigma \label{BRD1D2}
    	 \end{align}
     From Green's theorems over $D_{1}$, It follows that
     \begin{align}
     	\int_{D_{1}}|\nabla v_{1}|^{2}dx =  \int_{\partial D_{1}} \overline{v_{1}} \frac{\partial  v_{1}}{\partial \nu}d\sigma \label{D1}
     \end{align}
    Similarly, applying Green's theorems on $D_{3}\backslash \overline{D_{2}}$ and \eqref{TBound}, we have
    \begin{align}
    	\int_{D_{3}\backslash \overline{D_{2}}}|\nabla v_{3}|^{2}dx &=  \int_{\partial D_{3}} \overline{v_{3}}
    		\frac{\partial v_{3}}{\partial \nu}d\sigma - \int_{\partial D_{2}} \overline{v_{3}}
    		\frac{\partial v_{3}}{\partial \nu}d\sigma \nonumber \\
    	    &=\int_{\partial D_{3}} \overline{v_{3}}\frac{\partial v_{3}}{\partial \nu}d\sigma -
    	     \frac{\epsilon_{2}}{\epsilon_{3}}\int_{\partial D_{2}} \overline{v_{2}}\frac{\partial v_{2}}{\partial \nu}d\sigma \nonumber \\
    		&=\int_{\partial D_{3}} \overline{v_{3}}\frac{\partial v_{3}}{\partial \nu}d\sigma -
    		\frac{\epsilon_{2}}{\epsilon_{3}}\int_{D_{2}}|\nabla v_{2}|^{2}dx.\label{D3D2}
    \end{align}
    When we take $R$ sufficiently large, it follows form \eqref{SRMC} that
    \begin{align}
    	\int_{\partial B_{R}} \overline{u} \frac{\partial u}{\partial \nu}d\sigma=0.\label{SRMC1}
    \end{align}
    Combining \eqref{BRD1D2},\eqref{D1},\eqref{D3D2} and \eqref{SRMC1},we obtain
    \begin{align}
    	\int_{{B_{R}\backslash(\overline{D_{1}\cup D_{3}})}}|\nabla u|^{2}dx +
    	\frac{\epsilon_{1}}{\epsilon_{m}}\int_{D_{1}}|\nabla v_{1}|^{2}dx+
    	\frac{\epsilon_{2}}{\epsilon_{m}}\int_{D_{2}}|\nabla v_{2}|^{2}dx+
    	\frac{\epsilon_{3}}{\epsilon_{m}}\int_{D_{3}\backslash \overline{D_{2}}}|\nabla v_{3}|^{2}dx = 0. \label{EnergyE}
    \end{align}
    Since $\epsilon_{1},\epsilon_{2},\epsilon_{m}>0$ and $\epsilon_{3}=-s+i\delta$ with $s>0$,taking the imaginary part of \eqref{EnergyE} yields
    \begin{align}
    	-\frac{\delta}{\epsilon_{m}}\int_{D_{3}\backslash \overline{D_{2}}}|\nabla v_{3}|^{2}dx = 0 \label{V3}
    \end{align}
    Then taking the real part of \eqref{EnergyE} and using \eqref{V3}, we obtain
    \begin{align}
    	\int_{{B_{R}\backslash(\overline{D_{1}\cup D_{3}})}}|\nabla u|^{2}dx +
    	\frac{\epsilon_{1}}{\epsilon_{m}}\int_{D_{1}}|\nabla v_{1}|^{2}dx+
    	\frac{\epsilon_{2}}{\epsilon_{m}}\int_{D_{2}}|\nabla v_{2}|^{2}dx = 0. \label{uV1V2}
    \end{align}
    Therefore, we can conclude that $\nabla u = \nabla v_{1} = \nabla v_{2} = \nabla v_{3} =0$.
    Combined with the decay condition \eqref{SRMC}, it follows that $u=0$ in $\mathbb{R}^{2}$.
    \end{proof}

\subsection{Existence and stability}
We shall settle the existence problem for equation (1.1) by establishing that Fredholm's alternative applies to the problem. Following the approach in \cite{Ruiz2018}, it is first necessary to construct the Green‘s function $G_{D_{1}}(x,y)$, which denotes the total field generated by a point source located at $y$ in the presence of $D_{1}$ alone. Therefore, for $y \notin \overline{D_{1}}$, $G_{D_{1}}(x,y)$ satisfies the following equation:
    \begin{align}
    	\begin{cases}
    		\nabla \cdot \big(\epsilon_{1}\chi(D_{1}) + \epsilon_{m}\chi(\mathbb{R}^{2}\backslash\overline{D_{1}})\big) \nabla u = \delta_{y} \quad &in \ \mathbb{R}^{2}\backslash(\partial D_{1}) \\
    		u|_{+} = u|_{-}  \quad &on \ \partial D_{1}  \\
    		\epsilon_{m} \frac{\partial u}{\partial \nu} \big|_{+} = \epsilon_{1} \frac{\partial u}{\partial \nu} \big|_{-}  \quad &on \ \partial D_{1} \\
    		u(x) = O(|x|^{-1}) \quad &as \ |x| \to \infty.
    	\end{cases} \label{ModelGreen}
    \end{align}
    Moreover, $G_{D_1}$ admits the following explicit representation:
    \begin{align}
    	G_{D_{1}}(x,y) = G(x,y) + \mathcal{S}_{D_{1}}(\lambda_{1}I-\mathcal{K}^{*}_{D_{1}})^{-1}\bigg[\frac{\partial}{\partial \nu_{1}}G(\cdot,y)\bigg](x),\ for \ x \in \mathbb{R}^{2}\ and \ y \notin \overline{D_{1}},
    	\label{GD1}
    \end{align}
    where $\lambda_1 = \frac{\epsilon_1+\epsilon_m}{2(\epsilon_1-\epsilon_m)}$.

     We now proceed to construct the solution $u$ to equation \eqref{Model}. First, we define $u_{D_{1}}$ as the total field generated by the incident wave $u^{i}$ in the presence of only $D_{1}$. Additionally, we denote by $\frac{\partial}{\partial \nu_{i}}$ the outward normal derivative on the boundary $\partial D_{i}$. We can represent $u_{D_{1}}$ as
     \begin{align}
     	u_{D_{1}} (x) = u^{i}(x) + \mathcal{S}_{D_{1}}(\lambda_{1}I-\mathcal{K}^{*}_{D_{1}})^{-1}\bigg[\frac{\partial u^{i}}{\partial \nu_{1}} \bigg](x), \quad for \ x \in \mathbb{R}^{2}. \label{uD1}
     \end{align}
    In order to represent the field produced by $D_{2}$ and $D_{3}$, we define the operator
    \begin{align}
    	\mathcal{S}_{D_{i},D_{1}}[\varphi](x) = \int_{\partial D_{i}}G_{D_{1}}(x,y)\varphi(y)
    	d\sigma(y), \quad for \ i=2,3. \nonumber
    \end{align}
    From \eqref{GD1}, the above operator can be further written as
    \begin{align}
    	\mathcal{S}_{D_{i},D_{1}}[\varphi](x) = \mathcal{S}_{D_{i}}[\varphi](x) + \mathcal{S}^{1}_{D_{i},D_{1}}[\varphi](x), \nonumber
    \end{align}
    where $\mathcal{S}^{1}_{D_{i},D_{1}}$ is given by
    \begin{align}
    	\mathcal{S}^{1}_{D_{i},D_{1}}[\varphi](x) = \int_{\partial D_{i}}\mathcal{S}_{D_{1}}(\lambda_{1}I-\mathcal{K}^{*}_{D_{1}})^{-1}\bigg[\frac{\partial}{\partial \nu_{1}}G(\cdot,y)\bigg](x)\varphi(y)d\sigma(y).\nonumber
    \end{align}
    Combining \eqref{GreenFunctionSpectralExpan} with results from \cite{Ruiz2018}, for any
    $\varphi \in \mathcal{H}^{*}(\partial D_{i})$, $\mathcal{S}^{1}_{D_{i},D_{1}}[\varphi]$ can be further expressed as
    \begin{align}
    	\mathcal{S}^{1}_{D_{i},D_{1}}[\varphi](x) = \mathcal{S}_{D_{1}}(\lambda_{1}I-\mathcal{K}^{*}_{D_{1}})^{-1}
    	\frac{\partial\mathcal{S}_{D_{i}}[\varphi]}{\partial\nu_{1}}(x).\label{S1D1D2}
    \end{align}
    Thus, we arrive at the following representation for the total potential $u$
    \begin{align}
    	u(x) = u_{D_{1}}(x) + \mathcal{S}_{D_{2},D_{1}}[\varphi_{2}](x) + \mathcal{S}_{D_{3},D_{1}}[\varphi_{3}](x),\ for \ x \in \mathbb{R}^{2},
    	\label{totalpotential}
    \end{align}
    where $\varphi_{2} \in \mathcal{H}^{*}(\partial D_{2})$ and $\varphi_{3} \in \mathcal{H}^{*}(\partial D_{3}).$ Combining the boundary conditions in \eqref{Model}, \eqref{totalpotential} and jump relations \eqref{jumprelation1} and \eqref{jumprelation2}, we
    obtain the following system of equations for the density $\varphi_{2}$ and $\varphi_{3}$:
    \begin{align}
       \begin{cases}
    		\lambda_{2}\varphi_{2}+A_{11}[\varphi_{2}](x) + A_{12}[\varphi_{3}](x)= \frac{\partial u_{D_{1}}(x)}{\partial \nu_{2}},\quad for \ x\in \partial D_{2},\\
    		\lambda_{3}\varphi_{3}+A_{21}[\varphi_{2}](x) + A_{22}[\varphi_{3}](x)=\frac{\partial u_{D_{1}}(x)}{\partial \nu_{3}},\quad for \ x\in \partial D_{3},\\
        \end{cases} \label{Integral-equations}
     \end{align}
    where
    \begin{align*}
    	\begin{cases}
    		\lambda_{2} =\frac{\epsilon_{2} + \epsilon_{3}}{2(\epsilon_{2}-\epsilon_{3})},\\
    		\lambda_{3} =\frac{\epsilon_{3} + \epsilon_{m}}{2(\epsilon_{3}-\epsilon_{m})},\\
    		A_{11} =- \mathcal{K}^{*}_{D_{2}} - \frac{\partial}{\partial\nu_{2}}\mathcal{S}^{1}_{D_{2},D_{1}}, \\
    		A_{12} =-\frac{\partial}{\partial\nu_{2}} \mathcal{S}_{D_{3}} -
    		\frac{\partial}{\partial\nu_{2}}
    		\mathcal{S}^{1}_{D_{3},D_{1}},	\\
    		A_{21} =  -\frac{\partial}{\partial\nu_{3}} \mathcal{S}_{D_{2}} -
    		\frac{\partial}{\partial\nu_{3}}\mathcal{S}^{1}_{D_{2},D_{1}},	\\
    		A_{22} = -\mathcal{K}^{*}_{D_{3}} - \frac{\partial}{\partial\nu_{3}}\mathcal{S}^{1}_{D_{3},D_{1}}.
    	\end{cases}
    \end{align*}
    We now introduce operators $E,A:\mathcal{H}^{*}(\partial D_{2}) \times \mathcal{H}^{*}(\partial D_{3}) \to \mathcal{H}^{*}(\partial D_{2}) \times \mathcal{H}^{*}(\partial D_{3})$ defined by
    \begin{align*}
    	E = \begin{bmatrix} \lambda_{2}I & 0 \\ 0 & \lambda_{3}I \end{bmatrix}, \quad
    	A = \begin{bmatrix} A_{11} & A_{12} \\ A_{21} & A_{22} \end{bmatrix}.
    \end{align*}
    Setting
    \begin{align*}
    	\Phi = \begin{bmatrix} \varphi_{2} \\  \varphi_{3} \end{bmatrix}, \quad
    	g =  \begin{bmatrix}  \frac{\partial u_{D_{1}}(x)}{\partial \nu_{2}}\\  \frac{\partial u_{D_{1}}(x)}{\partial \nu_{3}} \end{bmatrix},
    \end{align*}
    then the integral equations \eqref{Integral-equations} can be written in the form
    \begin{align}
    	(E+A)\Phi=g. \label{Integral-equations-matrix}
    \end{align}	

    Under the assumptions of this paper, $\epsilon_{1},\epsilon_{2},\epsilon_{m}>0$ and $\epsilon_{3}=-s+i\delta$ with $s>0, \delta>0$, this implies $\lambda_{2},\lambda_{3} \ne 0$ and it follows that the matrix $E$ is invertible. According to \cite{Ruiz2018}, the entries of matrix $A$ are compact operators, hence, $A$ itself is compact. Thus, it suffices to prove that the homogeneous equation of \eqref{Integral-equations-matrix} has only the zero
    solution. Applying the Riesz-Fredholm theorem then yields the invertibility of $E+A$.
    \begin{thm}\label{thm2}
    	If $\epsilon_{1},\epsilon_{2},\epsilon_{m}>0$ and $\epsilon_{3}=-s+i\delta$ with $s>0, \delta>0$, then $E+A$ is invertible.
    \end{thm}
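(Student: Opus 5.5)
Since $E$ is invertible and $A$ is compact, $E+A = E(I+E^{-1}A)$ with $E^{-1}A$ compact. By the Riesz–Fredholm theorem it therefore suffices to show that $E+A$ is injective. So we take $\Phi=(\varphi_2,\varphi_3)\in\mathcal{H}^*(\partial D_2)\times\mathcal{H}^*(\partial D_3)$ with $(E+A)\Phi=0$ and aim to show $\Phi=0$. The plan is to reduce to Theorem \ref{thm1}: build a field from $\Phi$ that solves \eqref{Model} with $u^i=0$, then use uniqueness to force that field to vanish, and finally read off $\Phi=0$ from the jump relations.

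\textbf{Step 1 (the associated field).} Set
\[
w(x)=\mathcal{S}_{D_2,D_1}[\varphi_2](x)+\mathcal{S}_{D_3,D_1}[\varphi_3](x),\qquad x\in\mathbb{R}^2 .
\]
By construction of $G_{D_1}$ in \eqref{ModelGreen}, $w$ has the following properties:
\begin{itemize}
\item it is harmonic off $\partial D_1\cup\partial D_2\cup\partial D_3$;
\item it satisfies the $D_1$ transmission conditions;
\item it is continuous across $\partial D_2$ and $\partial D_3$, by \eqref{jumprelation1}, since $\mathcal{S}^1_{D_i,D_1}[\varphi_i]$ is smooth near $\partial D_i$.
\end{itemize}
Because $\varphi_i$ has zero mean and the correction term is a single layer on $\partial D_1$ with mean-zero density, $w=O(|x|^{-1})$ at infinity. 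Reversing the derivation of \eqref{Integral-equations} from \eqref{jumprelation2}, the two homogeneous rows $\lambda_2\varphi_2+A_{11}\varphi_2+A_{12}\varphi_3=0$ and $\lambda_3\varphi_3+A_{21}\varphi_2+A_{22}\varphi_3=0$ are exactly the flux conditions $\varepsilon_3\partial_\nu w|_+=\varepsilon_2\partial_\nu w|_-$ on $\partial D_2$ and $\varepsilon_m\partial_\nu w|_+=\varepsilon_3\partial_\nu w|_-$ on $\partial D_3$. I would verify this by writing $\partial_\nu w|_\pm=(\pm\tfrac12 I+\mathcal{K}^*_{D_i})\varphi_i+(\text{smooth terms})$ and checking the identity $\tfrac{\varepsilon_a+\varepsilon_b}{2(\varepsilon_a-\varepsilon_b)}$ algebra. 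Hence $w$ solves \eqref{Model} with $u^i=0$.

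\textbf{Step 2 (vanishing of $w$).} Theorem \ref{thm1} applies with $\varepsilon_3=-s+i\delta$, $\delta>0$, and gives $w\equiv 0$ in $\mathbb{R}^2$.

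\textbf{Step 3 (recovering $\Phi=0$).} Near $\partial D_2$, the terms $\mathcal{S}_{D_3}[\varphi_3]$ and $\mathcal{S}^1_{D_i,D_1}[\varphi_i]$ are smooth across $\partial D_2$, because $\overline{D_1}$ is disjoint from $\overline{D_3}$ and $\partial D_2\cap\partial D_3=\emptyset$. The jump of $\partial_\nu w$ across $\partial D_2$ therefore equals the jump of $\partial_\nu\mathcal{S}_{D_2}[\varphi_2]$, which by \eqref{jumprelation2} is $\varphi_2$. Since $w\equiv0$, this jump is $0$, so $\varphi_2=0$. The same argument on $\partial D_3$ gives $\varphi_3=0$. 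Thus $\ker(E+A)=\{0\}$, and invertibility follows from Fredholm's alternative, with bounded inverse on the product space.

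\textbf{Main obstacle.} The main obstacle is the bookkeeping in Step 1: confirming that the rows of \eqref{Integral-equations}, with the given signs of $A_{ij}$ and the given $\lambda_2,\lambda_3$, encode precisely the transmission conditions. This includes the orientation issue that $\partial D_2$ is an interior interface of the shell, so the "$+$" side lies in $D_3\setminus\overline{D_2}$. A second point is the decay claim $w=O(|x|^{-1})$. The mean-zero property of $\varphi_i$ handles the $\mathcal{S}_{D_i}$ part. For the $D_1$ correction, I would use that $(\lambda_1 I-\mathcal{K}^*_{D_1})^{-1}$ preserves $H^{-1/2}_0$ and that normal derivatives of harmonic functions integrate to zero over $\partial D_1$. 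That gives the normalization needed to invoke Theorem \ref{thm1}.
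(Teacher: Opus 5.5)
Your proposal is correct and follows essentially the same route as the paper. Both reduce the claim, via the Fredholm alternative, to injectivity of $E+A$. Both then observe that $w=\mathcal{S}_{D_2,D_1}[\varphi_2]+\mathcal{S}_{D_3,D_1}[\varphi_3]$ solves the homogeneous version of \eqref{Model}, apply Theorem~\ref{thm1} to get $w\equiv 0$, and read off $\varphi_2=\varphi_3=0$ from the jumps of $\partial_\nu w$ across $\partial D_2$ and $\partial D_3$. Your extra checks fill in details the paper leaves implicit: that the homogeneous rows of \eqref{Integral-equations} encode the flux conditions with the stated $\lambda_2,\lambda_3$, and that the mean-zero densities give $w=O(|x|^{-1})$.
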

    \begin{proof}
	From the above discussion, it suffices to prove that $\Phi=0$ when $g=0$ in \eqref{Integral-equations-matrix}. We assume that
	\begin{align*}
		\omega(x) = u(x)-u_{D_{1}}(x) =
          \mathcal{S}_{D_{2},D_{1}}[\varphi_{2}](x)
         + \mathcal{S}_{D_{3},D_{1}}[\varphi_{3}](x),\ for \ x \in \mathbb{R}^{2}.
	\end{align*}
     Since $w$ satisfies the homogeneous form of equation \eqref{Model}, it follows from \Thmref{thm1} that $w=0$ in $\mathbb{R}^{2}$. Therefore
     \begin{align*}
     	\varphi_{2} &= \frac{\partial w}{\partial \nu_{2}} \bigg|_{+} - \frac{\partial w}{\partial \nu_{2}} \bigg|_{-}=0, \\
     	\varphi_{3} &= \frac{\partial w}{\partial \nu_{3}} \bigg|_{+} - \frac{\partial w}{\partial \nu_{3}} \bigg|_{-}=0,
     \end{align*}
     and the theorem is proved.
    \end{proof}
    Next, the main objective of this paper is to investigate the dependence of the solution $u$ on
     the parameter $s$ and we observe that $s$ appears only in $E$ with Lipschitz continuity, we derive a stability result for the density $\Phi$ with respect to $s$. This result is stated in the theorem below.
     \begin{thm}\label{thm3}
     Let $\epsilon_{1},\epsilon_{2},\epsilon_{m},\delta>0$. Choose parameters $\epsilon^{(1)}_{3} = -s_{1}+i\delta$ and $ \epsilon^{(2)}_{3} = -s_{2}+i\delta$, with $s_{1},s_{2}$ in a compact set
     $K\subset(0,+\infty)$. We now define $Y=\mathcal{H}^{*}(\partial D_{2}) \times \mathcal{H}^{*}(\partial D_{3})$ equipped with the norm
     \[ \norm{\Phi}_{Y} = \big(\norm{\varphi_{2}}^{2}_{\mathcal{H}^{*}(\partial D_{2})}
      + \norm{\varphi_{3}}^{2}_{\mathcal{H}^{*}(\partial D_{3})}\big)^{1/2}.\]
      Then
      \begin{align}
      	\norm{\Phi^{(1)} - \Phi^{(2)}}_{Y} \le C|s_{1}-s_{2}|.
      \end{align}\label{stability}
     \end{thm}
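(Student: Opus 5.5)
We write $E_k$ for the diagonal operator $E$ built from $\epsilon^{(k)}_3=-s_k+i\delta$. The density $\Phi^{(k)}$ then solves $(E_k+A)\Phi^{(k)}=g$, $k=1,2$. The compact part $A$ and the right-hand side $g$ depend only on $D_1$, $\epsilon_1$, $\epsilon_m$ and the geometry, not on $s$. The plan is the standard perturbation argument. Subtracting the two equations gives
\begin{align*}
(E_1+A)\big(\Phi^{(1)}-\Phi^{(2)}\big)=(E_2-E_1)\Phi^{(2)},
\end{align*}
and hence
\begin{align*}
\norm{\Phi^{(1)}-\Phi^{(2)}}_{Y}\le \norm{(E_1+A)^{-1}}\,\norm{E_2-E_1}\,\norm{(E_2+A)^{-1}}\,\norm{g}_{Y}.
\end{align*}
It therefore suffices to prove three things: a Lipschitz bound $\norm{E_1-E_2}\le C|s_1-s_2|$, a uniform bound on $\norm{(E(s)+A)^{-1}}$ for $s\in K$, and $g\in Y$. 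The last holds because $u_{D_1}$ is harmonic near $\partial D_2$ and $\partial D_3$, which are disjoint from $\overline{D_1}$. Its normal derivatives are smooth there and have zero mean, since each is the flux of a harmonic function through a closed curve enclosing no singularity.

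For the Lipschitz bound, $E(s)-E(s')$ is diagonal with entries $\lambda_j(s)-\lambda_j(s')$, $j=2,3$. Here $\lambda_2(s)=\frac{\epsilon_2-s+i\delta}{2(\epsilon_2+s-i\delta)}$ and $\lambda_3(s)=\frac{-s+i\delta+\epsilon_m}{2(-s+i\delta-\epsilon_m)}$. Both are rational in $s$, and their denominators are bounded below in modulus by $2\delta>0$ because of the imaginary part. So both are smooth on $(0,\infty)$ with derivatives bounded on $K$, and the mean value theorem gives $|\lambda_j(s_1)-\lambda_j(s_2)|\le C_K|s_1-s_2|$. The operator norm on $Y$ of a scalar diagonal operator is the maximum of the moduli of its entries.

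The main step is the uniform bound on $(E(s)+A)^{-1}$ over $K$. By Theorem \ref{thm2}, $E(s)+A$ is invertible for every $s\in K$. The map $s\mapsto E(s)+A$ is continuous, indeed Lipschitz, in operator norm by the previous step. Inversion is continuous on the open set of invertible operators, via the Neumann series $(T+H)^{-1}=(I+T^{-1}H)^{-1}T^{-1}$ for $\norm{H}<\norm{T^{-1}}^{-1}$. Hence $s\mapsto\norm{(E(s)+A)^{-1}}$ is continuous on the compact set $K$ and attains a finite maximum $M_K$.

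Combining the three bounds gives $\norm{\Phi^{(1)}-\Phi^{(2)}}_Y\le M_K^2 C_K\norm{g}_Y|s_1-s_2|$. The only subtle point I expect is checking that $A$ indeed maps $Y$ into $Y$, so that everything lives in the mean-zero spaces $\mathcal{H}^*$. This is guaranteed by the cited results of \cite{Ruiz2018} used earlier to set up \eqref{Integral-equations-matrix}, and also by the same flux argument as for $g$, since each off-diagonal and correction term is a normal derivative of a function harmonic inside the relevant domain.
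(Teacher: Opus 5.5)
Your proposal is correct and follows the paper's proof step for step. You subtract the two systems, write $\Phi^{(1)}-\Phi^{(2)}=(E^{(1)}+A)^{-1}(E^{(2)}-E^{(1)})(E^{(2)}+A)^{-1}g$, bound $E^{(2)}-E^{(1)}$ through the Lipschitz dependence of $\lambda_2,\lambda_3$ on $s$, and bound the inverses uniformly on $K$ by continuity of inversion and compactness, which you justify via the Neumann series more carefully than the paper does.

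One side remark is slightly off, though it does not affect the argument. $\mathcal{S}_{D_2}[\varphi_2]$ is not harmonic inside $D_3$, so the zero mean of $A_{21}\varphi_2$ on $\partial D_3$ does not come from your flux argument. It comes instead from $\int_{\partial D_3}\partial_\nu\mathcal{S}_{D_2}[\varphi_2]\,d\sigma=\int_{\partial D_2}\varphi_2\,d\sigma=0$.
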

     \begin{proof}
     	From the derivation of \eqref{Integral-equations-matrix}, it is clear that only $E$ and $\Phi$ depend on $s$, we denote by $E^{(i)}$ and $\Phi^{(i)}$ the terms corresponding to $\epsilon^{(i)}_{3}$.  From \eqref{Integral-equations-matrix} it follows that
     	\begin{align*}
     		\begin{cases}
     			(E^{(1)} + A)\Phi^{(1)}=g,\\
     			(E^{(2)} + A)\Phi^{(2)}=g.
     		\end{cases}
     	\end{align*}
     Subtracting the two equations above and adding and subtracting $(E^{(2)} + A)\Phi^{(2)}$, we obtain
     \begin{align}\label{tuidao1}
     	(E^{(1)} + A)(\Phi^{(1)}- \Phi^{(2)}) = (E^{(2)}-E^{(1)})\Phi^{(2)}.
     \end{align}
     \Thmref{thm2} guarantees the invertibility of $E^{(i)}+A$ for our chosen parameters. Consequently, $\Phi^{(i)}$ can be expressed as $\Phi^{(i)} = (E^{(i)}+A)^{-1}g$. Therefore,
     \eqref{tuidao1} can be rewritten in the form
     \begin{align*}
     	\Phi^{(1)} - \Phi^{(2)} = (E^{(1)}+A)^{-1}(E^{(2)}-E^{(1)})(E^{(2)}+A)^{-1}g.
     \end{align*}
     Taking the norm on both sides of the above equation yields
     \begin{align}
     	\norm{\Phi^{(1)} - \Phi^{(2)}}_{Y} \le \norm{(E^{(1)}+A)^{-1}}_{\mathcal{L}(Y)}
     	\norm{E^{(2)}-E^{(1)}}_{\mathcal{L}(Y)} \norm{(E^{(2)}+A)^{-1}}_{\mathcal{L}(Y)}
     	\norm{g}_{Y}.
     \end{align}
     We observe that $E+A$ is continuously invertible with respect to $s$ on the compact set $K$.
     Consequently, its inverse $(E+A)^{-1}$ is also continuous in $s$. Since $K$ is compact, it follows that $(E+A)^{-1}$ is uniformly bounded on $K$. The operator $g$ is independent of the parameter $s$ and depends only on $\epsilon_{1}$ and the incident field $u^{i}$. Moreover, when
     $\epsilon_{1}>0$, $\norm{g}_{Y}$ is bounded. For $E^{(1)}-E^{(2)}$, we have
     \begin{align*}
     	E^{(2)}-E^{(1)} = \begin{bmatrix} (\lambda^{(2)}_{2}-\lambda^{(1)}_{2})I & 0 \\ 0 & (\lambda^{(2)}_{3}-\lambda^{(1)}_{3})I \end{bmatrix},
     \end{align*}
     where
     \begin{align*}
     	\begin{dcases}
     		\lambda^{(2)}_{1}-\lambda^{(1)}_{2} = \frac{\epsilon_{2}(\epsilon^{(2)}_{3} - \epsilon^{(1)}_{3})}{(\epsilon_{2}-\epsilon^{(1)}_{3})(\epsilon_{2}-\epsilon^{(2)}_{3})}
     		= \frac{\epsilon_{2}(s_{2}-s_{1})}
     		{(\epsilon_{2}+s_{1}-i\delta)(\epsilon_{2}+s_{2}-i\delta)},\\
     		\lambda^{(2)}_{3}-\lambda^{(1)}_{3} = \frac{\epsilon_{m}(\epsilon^{(2)}_{3} - \epsilon^{(1)}_{3})}{(\epsilon_{m}-\epsilon^{(1)}_{3})(\epsilon_{m}-\epsilon^{(2)}_{3})}
     		= \frac{\epsilon_{m}(s_{2}-s_{1})}
     		{(\epsilon_{m}+s_{1}-i\delta)(\epsilon_{m}+s_{2}-i\delta)}.\\
     	\end{dcases}
     \end{align*}
     Since $\epsilon_{2},\epsilon_{m},\delta>0$ and $s_{i}$ lying in a compact set $K\subset(0,+\infty)$, we can conclude that
     \begin{align*}
     	\norm{E^{(2)}-E^{(1)}}_{\mathcal{L}(Y)} \le C|s_{1}-s_{2}|.
     \end{align*}
     Therefore, \eqref{stability} follows from the above derivation, which completes the proof.
     \end{proof}
\section{Active cloaking}
\subsection{Optimal control scheme}
     The goal of this paper is to achieve cloaking for the core-shell probe $D_{3}$ by designing a shell with a permittivity of the form $\epsilon_{3}=-s+i\delta$. Based on the representation of the total potential $u$ given in \eqref{totalpotential}, we can decompose $u$ as
     \begin{align}
     	u(x) = u_{D_{1}}(x) + u_{D_{2},D_{3}}(x) \quad for \ x \in \mathbb{R}^{2}, \label{totalpotential_decompose}
     \end{align}
     where
     \begin{align}\label{uD2D3}
     	u_{D_{2},D_{3}}(x) = \mathcal{S}_{D_{2}}[\varphi_{2}](x) + \mathcal{S}^{1}_{D_{2},D_{1}}[\varphi_{2}](x)
       + \mathcal{S}_{D_{3}}[\varphi_{3}](x)
     	+ \mathcal{S}^{1}_{D_{3},D_{1}}[\varphi_{3}](x),\ for \ x \in \mathbb{R}^{2}.
     \end{align}
     According to \eqref{uD1},\eqref{S1D1D2} and \eqref{totalpotential},
     $u_{D_{1}}$ denotes the total potential when only $D_{1}$ is present. Meanwhile $u_{D_{2},D_{3}}$ represents the potential generated by $D_{3}$ and the field resulting from the interaction between $D_{1},D_{2}$ and $D_{3}$. Therefore, to achieve invisibility (cloaking) for $D_{3}$,  it suffices to control the real part of the permittivity $\epsilon_{3}$ such that $u_{D_{2},D_{3}}$ is minimized.

     We take a ball $B_{R}$ such that $D_{1} \subset B_{R}$ and $\partial B_{R} \cap (D_{1} \cup D_{3}) = \emptyset$. Since $\partial D_{1}\cap \partial D_{2}\cap \partial D_{3}\cap \partial B_{R} = \emptyset$, the kernel function $G(x,y)$
     of the single-layer potential operator $\mathcal{S}_{D_{i}}$ has the property that for any fixed $y\in\partial D_{i}$(where $i=1,2,3$), $G_{y}(x) = G(x,y)$ belongs to $C^{\infty}(\partial B_{R})$.  Consequently, for every $\varphi \in \mathcal{H}^{*}(\partial D_{i})$, $\mathcal{S}_{D_{i}}[\varphi]$ is bounded and belongs to $L^{2}(\partial B_{R})$.
     Therefore, we introduce the following minimization functional:
     \begin{align}\label{minimization_functional_J}
     	\mathcal{J}(s) = \frac{1}{2}\norm{u_{D_{2},D_{3}}(s,\cdot)}^{2}_{L^{2}(\partial B_{R})},
     \end{align}
     where $\epsilon_{3} = -s+i\delta$ and $s \in K \subset (0,+\infty)$.

      \begin{thm}\label{thm01}
      	There exists a minimizer $s\in K$ for minimization functional $\mathcal{J}(s)$.
      \end{thm}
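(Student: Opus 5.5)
The plan is the direct method via Weierstrass: $K\subset(0,+\infty)$ is compact, so it suffices to show that $s\mapsto \mathcal{J}(s)$ is continuous (indeed Lipschitz) on $K$; a continuous real function on a compact set attains its minimum. Continuity will come from the stability estimate of \Thmref{thm3} and the boundedness of the layer potentials from $\partial D_i$ into $L^2(\partial B_R)$.

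\emph{Step 1: a bounded linear map from the densities to the data.} By \eqref{uD2D3}, $u_{D_2,D_3}(s,\cdot)=\mathcal{T}\Phi(s)$, where
\[
\mathcal{T}\Phi=\mathcal{S}_{D_2}[\varphi_2]+\mathcal{S}^1_{D_2,D_1}[\varphi_2]+\mathcal{S}_{D_3}[\varphi_3]+\mathcal{S}^1_{D_3,D_1}[\varphi_3]\Big|_{\partial B_R}.
\]
The key point is that $\mathcal{T}$ does not depend on $s$: the parameter $s$ enters only through $E$, hence only through $\Phi=(E+A)^{-1}g$. I will show $\mathcal{T}:Y\to L^2(\partial B_R)$ is bounded.

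For the $\mathcal{S}_{D_i}$ terms, $\partial B_R$ has positive distance from $\partial D_i$. Hence the kernel $G(x,\cdot)$, for $x\in\partial B_R$, is smooth on $\partial D_i$ with $H^{1/2}(\partial D_i)$ norm bounded uniformly in $x$. Duality then gives
\[
|\mathcal{S}_{D_i}[\varphi](x)|\le C\norm{\varphi}_{H^{-1/2}}\le C\norm{\varphi}_{\mathcal{H}^*(\partial D_i)},
\]
using the norm equivalence recalled in Section 2. For the terms $\mathcal{S}^1_{D_i,D_1}$, I use \eqref{S1D1D2}. The map $\varphi\mapsto \partial_{\nu_1}\mathcal{S}_{D_i}[\varphi]$ is bounded into $H^{-1/2}(\partial D_1)$, and in fact into smooth functions, since $\overline{D_1}\cap\overline{D_3}=\emptyset$. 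The operator $(\lambda_1 I-\mathcal{K}^*_{D_1})^{-1}$ is bounded because $|\lambda_1|>1/2$ for $\epsilon_1,\epsilon_m>0$. Finally, $\mathcal{S}_{D_1}$ maps $H^{-1/2}(\partial D_1)$ boundedly into $L^2(\partial B_R)$, by the same distance argument if $\partial B_R$ is away from $\partial D_1$, or by trace theorems otherwise.

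\emph{Step 2: continuity of $\mathcal{J}$.} From \Thmref{thm3}, $\norm{\Phi(s_1)-\Phi(s_2)}_Y\le C|s_1-s_2|$, and $\Phi$ is uniformly bounded on $K$. Then
\[
|\mathcal{J}(s_1)-\mathcal{J}(s_2)|\le \tfrac12\big(\norm{\mathcal{T}\Phi(s_1)}+\norm{\mathcal{T}\Phi(s_2)}\big)\norm{\mathcal{T}}\,\norm{\Phi(s_1)-\Phi(s_2)}_Y\le C'|s_1-s_2|.
\]

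\emph{Step 3: conclusion.} Applying Weierstrass on $K$ gives a minimizer.

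The main obstacle is the uniform boundedness underlying Step 2, namely $\sup_{s\in K}\norm{(E+A)^{-1}}<\infty$. \Thmref{thm3} asserts this, but I would justify it directly. The map $s\mapsto E(s)$ is norm-continuous, and $E(s)+A$ is invertible for every $s$ by \Thmref{thm2}. Inversion is continuous on the open set of invertible operators, so $s\mapsto (E(s)+A)^{-1}$ is continuous. Compactness of $K$ then gives a uniform bound.

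A secondary point to check is the $L^2(\partial B_R)$ boundedness of $\mathcal{S}^1_{D_i,D_1}$ when $D_1\subset B_R$. There $\partial B_R$ still avoids $\partial D_1$ by the standing assumption $\partial B_R\cap(D_1\cup D_3)=\emptyset$ together with $D_1\subset B_R$ open, so it is handled by smoothness of the kernel.
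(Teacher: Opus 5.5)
Your proposal is correct and follows the paper's own argument. The paper obtains continuity of $s\mapsto u_{D_2,D_3}(s,\cdot)$ in $L^2(\partial B_R)$ from the stability estimate of Theorem~\ref{thm3} and the $s$-independent boundedness of the layer potentials into $L^2(\partial B_R)$, then applies Weierstrass on the compact set $K$; you make explicit what it only asserts (the uniform bound on $(E(s)+A)^{-1}$ via continuity of inversion, and the Lipschitz bound it states separately as Theorem~\ref{thm4}), with one caveat: $D_1\subset B_R$ open does not by itself rule out tangency of $\partial D_1$ and $\partial B_R$, but your trace-theorem fallback covers that case.
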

      \begin{proof}
     From \eqref{stability} and \eqref{uD2D3}, we see that $u_{D_{2},D_{3}}$ is continuous with respect to $s$. Since $s$ is restricted to a compact set $K$, it follows that $\norm{u_{D_{2},D_{3}}(s,\cdot)}_{L^{2}(\partial B_{R})}$ is uniformly bounded and continuous on
     $K$. Consequently, $\mathcal{J}$ has at least one minimizer.
     \end{proof}

     Moreover, applying
      \Thmref{thm3}, we obtain that $\mathcal{J}$ is Lipschitz stable with respect to $s$. The theorem is stated as follows.
      \begin{thm}\label{thm4}
      	Let $\epsilon_{1},\epsilon_{2},\epsilon_{m},\delta>0$. Choose parameters $\epsilon^{(1)}_{3} = -s_{1}+i\delta$ and $ \epsilon^{(2)}_{3} = -s_{2}+i\delta$, with $s_{1},s_{2}$ in a compact set
      	$K\subset(0,+\infty)$, the following Lipschitz stability holds:
      	\begin{align}
      		\big|\mathcal{J}(s_{1}) - \mathcal{J}(s_{2})\big| \le C|s_{1} - s_{2}|. \label{J_stability}
      	\end{align}
      \end{thm}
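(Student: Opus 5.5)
The plan is to reduce \eqref{J_stability} to \Thmref{thm3} by showing that the map $\Phi\mapsto u_{D_2,D_3}|_{\partial B_R}$ is a bounded linear operator from $Y$ into $L^2(\partial B_R)$ that does not depend on $s$. By \eqref{uD2D3}, $u_{D_2,D_3}(s,\cdot)=\mathcal{T}\Phi^{(s)}$ with
\[
\mathcal{T}\Phi=\mathcal{S}_{D_2}[\varphi_2]+\mathcal{S}^1_{D_2,D_1}[\varphi_2]+\mathcal{S}_{D_3}[\varphi_3]+\mathcal{S}^1_{D_3,D_1}[\varphi_3]\Big|_{\partial B_R}.
\]
Only the densities depend on $s$. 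Boundedness of $\mathcal{S}_{D_i}:\mathcal{H}^*(\partial D_i)\to L^2(\partial B_R)$ holds because the kernel $G(x,y)$ is smooth for $x\in\partial B_R$ and $y\in\partial D_i$, which is the observation made before \eqref{minimization_functional_J}. Pairing a $C^\infty$ function of $y$ against an $H^{-1/2}$ density then gives a pointwise bound uniform in $x\in\partial B_R$.

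For $\mathcal{S}^1_{D_i,D_1}$ I would use \eqref{S1D1D2}. The factor $\partial\mathcal{S}_{D_i}[\varphi]/\partial\nu_1$ is bounded into $\mathcal{H}^*(\partial D_1)$, since $\partial D_1$ and $\partial D_i$ are separated and the kernel is smooth. The operator $(\lambda_1 I-\mathcal{K}^*_{D_1})^{-1}$ is bounded because $\lambda_1\notin(-1/2,1/2]$ for $\epsilon_1,\epsilon_m>0$. Finally $\mathcal{S}_{D_1}$ maps $\mathcal{H}^*(\partial D_1)$ boundedly into $L^2(\partial B_R)$, since $\mathcal{S}_{D_1}$ maps into $H^{1/2}$ traces and is smooth off $\partial D_1$. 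Altogether $\|\mathcal{T}\Phi\|_{L^2(\partial B_R)}\le C_T\|\Phi\|_Y$.

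Next, write $a_j=u_{D_2,D_3}(s_j,\cdot)$ and use
\[
|\mathcal{J}(s_1)-\mathcal{J}(s_2)|=\tfrac12\big|\|a_1\|-\|a_2\|\big|\,(\|a_1\|+\|a_2\|)\le \tfrac12\|a_1-a_2\|\,(\|a_1\|+\|a_2\|),
\]
with all norms in $L^2(\partial B_R)$. Then $\|a_1-a_2\|\le C_T\|\Phi^{(1)}-\Phi^{(2)}\|_Y\le C_TC|s_1-s_2|$ by \Thmref{thm3}.

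The remaining step, and the one I expect to be the main obstacle, is a uniform bound on $\|a_j\|$ for $s_j\in K$. This needs $\|\Phi^{(s)}\|_Y=\|(E^{(s)}+A)^{-1}g\|_Y$ bounded uniformly on $K$. I would argue this directly rather than rely on the brief remark in the proof of \Thmref{thm3}:
\begin{itemize}
\item $s\mapsto E^{(s)}$ is Lipschitz continuous in operator norm, by the explicit differences of $\lambda_2,\lambda_3$ computed there.
\item By \Thmref{thm2}, $E^{(s)}+A$ is invertible for every $s\in K$.
\item Inversion is continuous on the open set of invertible operators, so $s\mapsto (E^{(s)}+A)^{-1}$ is continuous on $K$.
\item Since $K$ is compact, $\sup_{s\in K}\|(E^{(s)}+A)^{-1}\|<\infty$.
\end{itemize}
The vector $g$ is independent of $s$ and bounded. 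Combining these gives $\|a_1\|+\|a_2\|\le 2C_T\sup_K\|\Phi^{(s)}\|_Y$, and hence \eqref{J_stability}.
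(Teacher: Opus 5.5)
Your proposal is correct and follows essentially the same route as the paper: the $s$-independent layer-potential map $Y\to L^2(\partial B_R)$ is bounded, which combined with \Thmref{thm3} gives Lipschitz dependence of $u_{D_2,D_3}(s,\cdot)$, and a difference-of-squares estimate then transfers this to $\mathcal{J}$. Two of your steps are ones the paper leaves implicit, and you make them rigorous: the treatment of the $\mathcal{S}^1_{D_i,D_1}$ terms, and the uniform bound on $\norm{u_{D_2,D_3}(s,\cdot)}_{L^2(\partial B_R)}$ over $K$ (via continuity of inversion and compactness), which is exactly what the paper's step $\tfrac12\bigl|\norm{a_1}^2-\norm{a_2}^2\bigr|\le C\bigl|\norm{a_1}-\norm{a_2}\bigr|$ silently requires.
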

      \begin{proof}
      From \eqref{uD2D3}, we denote $u_{D_{2},D_{3}}$ corresponding to $s_{i}$ by
      \begin{align*}
      	\begin{dcases}
      		u_{D_{2},D_{3}}(s_{1},x) = \mathcal{S}_{D_{2}}[\varphi^{(1)}_{2}](x) + \mathcal{S}^{1}_{D_{2},D_{1}}[\varphi^{(1)}_{2}](x) + \mathcal{S}_{D_{3}}[\varphi^{(1)}_{3}](x)
      		+ \mathcal{S}^{1}_{D_{3},D_{1}}[\varphi^{(1)}_{3}](x), \\
      		u_{D_{2},D_{3}}(s_{2},x) = \mathcal{S}_{D_{2}}[\varphi^{(2)}_{2}](x) + \mathcal{S}^{1}_{D_{2},D_{1}}[\varphi^{(2)}_{2}](x) + \mathcal{S}_{D_{3}}[\varphi^{(2)}_{3}](x)
      		+ \mathcal{S}^{1}_{D_{3},D_{1}}[\varphi^{(2)}_{3}](x).
      	\end{dcases}
      \end{align*}
     Based on the preceding discussion, we know that
     $\mathcal{S}_{D_{i}}: \mathcal{H}^{*}(\partial D_{i}) \to L^{2}(\partial B_{R})$
     is bounded. Combining this with \eqref{stability} yields
     \begin{align*}
     	\norm{u_{D_{2},D_{3}}(s_{1},\cdot)-u_{D_{2},D_{3}}(s_{2},\cdot)}_{L^{2}(\partial B_{R})} \le C|s_{1} - s_{2}|.
     \end{align*}
     Then we can obtain
     \begin{align*}
     	\big|\mathcal{J}(s_{1}) - \mathcal{J}(s_{2}) \big| &= \frac{1}{2}\Big|\norm{u_{D_{2},D_{3}}(s_{1},\cdot)}^{2}_{L^{2}(\partial B_{R})}-\norm{u_{D_{2},D_{3}}(s_{2},\cdot)}^{2}_{L^{2}(\partial B_{R})}\Big| \\
     	&\le C\Big|\norm{u_{D_{2},D_{3}}(s_{1},\cdot)}_{L^{2}(\partial B_{R})}
     	 - \norm{u_{D_{2},D_{3}}(s_{2},\cdot)}_{L^{2}(\partial B_{R})}\Big| \\
     	 &\le C\norm{u_{D_{2},D_{3}}(s_{1},\cdot)-u_{D_{2},D_{3}}(s_{2},\cdot)}_{L^{2}(\partial B_{R})}\\
     	 & \le C|s_{1} - s_{2}|.
     \end{align*}
      \end{proof}
  \begin{thm}\label{thm5}
  	Let $\{\mu_{k}\}$ be sequences such that
  	\begin{align}\label{raodong}
  	\norm{u^{(k)}_{D_{2},D_{3}}(s,\cdot) - u_{D_{2},D_{3}}(s,\cdot)}_{L^{2}(\partial B_{R})} \le \mu_{k},
  	\end{align}
   $s_{k}$ is a minimizer of \eqref{minimization_functional_J} with $u_{D_{2},D_{3}}$ replaced by
   $u^{(k)}_{D_{2},D_{3}}$. When $\mu_{k} \to 0$ as $k\to \infty$, there exists a convergent subsequence of $\{s_{k}\}$ and the limit of convergent subsequence is a minimizer of \eqref{minimization_functional_J}.
  \end{thm}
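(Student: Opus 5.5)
The plan is the standard stability argument for Tikhonov-type minimization: compactness of $K$ gives a convergent subsequence, and comparing perturbed and unperturbed functionals identifies the limit as a minimizer. First, since every $s_k$ lies in the compact set $K\subset(0,+\infty)$, Bolzano--Weierstrass gives a subsequence, still denoted $\{s_k\}$, with $s_k\to s^*\in K$. It remains to show that $\mathcal{J}(s^*)\le\mathcal{J}(s)$ for every $s\in K$.

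Write $\mathcal{J}_k(s)=\frac12\norm{u^{(k)}_{D_2,D_3}(s,\cdot)}^2_{L^2(\partial B_R)}$. I read \eqref{raodong} as holding uniformly in $s\in K$, which is the natural meaning since $\mu_k$ does not depend on $s$. The key intermediate estimate is
\begin{align*}
\big|\mathcal{J}_k(s)-\mathcal{J}(s)\big| \le \tfrac12\,\mu_k\big(2\norm{u_{D_2,D_3}(s,\cdot)}_{L^2(\partial B_R)}+\mu_k\big)\le C\mu_k+\tfrac12\mu_k^2, \qquad s\in K.
\end{align*}
It follows from $|a^2-b^2|=|a-b|(a+b)$, the triangle inequality, and the uniform bound $\sup_{s\in K}\norm{u_{D_2,D_3}(s,\cdot)}_{L^2(\partial B_R)}\le C$. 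That uniform bound was obtained in the proof of Theorem \ref{thm01} from Theorem \ref{thm3} and the boundedness of $\mathcal{S}_{D_i}:\mathcal{H}^*(\partial D_i)\to L^2(\partial B_R)$. Hence $\mathcal{J}_k\to\mathcal{J}$ uniformly on $K$.

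Now fix any $s\in K$. Minimality of $s_k$ for $\mathcal{J}_k$ gives $\mathcal{J}_k(s_k)\le\mathcal{J}_k(s)$. Then
\begin{align*}
\mathcal{J}(s^*) \le |\mathcal{J}(s^*)-\mathcal{J}(s_k)| + |\mathcal{J}(s_k)-\mathcal{J}_k(s_k)| + \mathcal{J}_k(s) .
\end{align*}
The first term is at most $C|s^*-s_k|\to0$ by the Lipschitz estimate \eqref{J_stability} of Theorem \ref{thm4}. The second term is at most $C\mu_k+\frac12\mu_k^2\to0$ by the uniform estimate. The last term satisfies $\mathcal{J}_k(s)\to\mathcal{J}(s)$. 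Letting $k\to\infty$ gives $\mathcal{J}(s^*)\le\mathcal{J}(s)$, so $s^*$ is a minimizer.

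The main obstacle is the uniformity in $s$ used at the moving points $s_k$. If \eqref{raodong} were only pointwise in $s$, the second term could not be controlled directly. In that case I would first try assuming, or deriving from the structure of the perturbation (for instance perturbed data entering $g$ in \eqref{Integral-equations-matrix} with a uniformly bounded $(E+A)^{-1}$ on $K$ as in Theorem \ref{thm3}), that the perturbation bound holds uniformly on $K$. This is what the statement implicitly intends.
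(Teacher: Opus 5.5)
Your proof is correct and follows the paper's argument: extract $s_k\to s^*\in K$ by compactness, then pass to the limit in the minimality inequality $\mathcal{J}_k(s_k)\le\mathcal{J}_k(s)$, using continuity of $\mathcal{J}$ in $s$ (you invoke the Lipschitz bound of Theorem~\ref{thm4}, the paper invokes continuity of $u_{D_2,D_3}$) together with the perturbation bound \eqref{raodong}. Reading \eqref{raodong} as uniform in $s\in K$ is exactly what the paper assumes silently when it replaces $\norm{u_{D_2,D_3}(s_k,\cdot)}$ by $\norm{u^{(k)}_{D_2,D_3}(s_k,\cdot)}$ in the limit, so making that assumption explicit is a clarification, not a gap.
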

\begin{proof}
	By the definition of $s_{k}$ we have
	\begin{align}\label{tuidao2}
		\norm{u^{(k)}_{D_{2},D_{3}}(s_{k},\cdot)}^{2}_{L^{2}(\partial B_{R})} \le
		\norm{u^{(k)}_{D_{2},D_{3}}(s,\cdot)}^{2}_{L^{2}(\partial B_{R})}, \ \forall s \in K.
	\end{align}
Since $\{s_{k}\} \in K$, there exists a convergent subsequence of $\{s_{k}\}$, still denoted as $\{s_{k}\}$, such that $s_{k} \to s^{*}$ and $s^{*} \in K$.
Based on the continuity of the $L^{2}$-norm, together with \eqref{raodong} and \eqref{tuidao2}, we
can obtain
\begin{align*}
	\norm{u_{D_{2},D_{3}}(s^{*},\cdot)}^{2}_{L^{2}(\partial B_{R})} =&
	\lim_{k\to \infty}\norm{u_{D_{2},D_{3}}(s_{k},\cdot)}^{2}_{L^{2}(\partial B_{R})} \\
	=& \lim_{k\to \infty}\norm{u^{(k)}_{D_{2},D_{3}}(s_{k},\cdot)}^{2}_{L^{2}(\partial B_{R})} \\
	\le& \lim_{k\to \infty}\norm{u^{(k)}_{D_{2},D_{3}}(s,\cdot)}^{2}_{L^{2}(\partial B_{R})} \\
	=& \norm{u_{D_{2},D_{3}}(s,\cdot)}^{2}_{L^{2}(\partial B_{R})}, \ \forall s \in K.	
\end{align*}
This implies that $s^{*}$ is a minimizer of \eqref{minimization_functional_J}.
\end{proof}

\subsection{Anomalous localized resonance scheme}
In the previous sections, we have proved that the minimization functional $\mathcal{J}(s)$ admits at least one minimizer $s^{*}$ on a bounded compact set. However, the explicit form of $s^{*}$ remains unknown, and it is not clear whether $\mathcal{J}(s^{*})$ tends to zero as
$\delta \to 0$; that is, whether the invisibility of $D_{3}$ can be achieved. In this section, we will employ the technique of local anomalous resonance from \cite{Ammari2013a} to show that, with an appropriate choice of parameters, there indeed exists an $s^{*}$ that
makes $D_{3}$ invisible.

 We assume that $D_{2}$ and $D_{3}$ form a concentric annular region centered at the origin, with $D_{2} = B_{r_{2}}=\{|x|<r_{2}\}, D_{3} = B_{r_{3}}=\{|x|<r_{3}\}$ and $r_{2}<r_{3}$. Let $D_{1}$ be centered at $z_{0}$ and satisfy $D_{1} \cap B_{r^{*}} = \emptyset$, where $r^{*} = \frac{r^{3}_{3}}{r^{2}_{2}}$. The permittivities are given by $\epsilon_{2}=\epsilon_{m}=1, \epsilon_{3}=-1+i\delta$, and
$\epsilon_{1}$ is taken as a positive real number. Then, from \eqref{Integral-equations}, it yields
\begin{align}\label{gama-delta}
  \lambda_{2} = -\lambda_{3} = \gamma_{\delta} = \frac{i\delta}{2(2-i\delta)}.
\end{align}
The integral equation \eqref{Integral-equations-matrix} can be rewritten as
\begin{align}
    (\gamma_{\delta}\tilde{I} + K^{*} + S)\Phi = b,\label{The integral equation2}
\end{align}
where $\tilde{I}$ is the $2\times2$ identity matrix(with identity operators as its entries), $b = [g_1, -g_2]^{T}$ and the operators $K^{*},S:\mathcal{H}^{*}(\partial D_{2}) \times \mathcal{H}^{*}(\partial D_{3}) \to \mathcal{H}^{*}(\partial D_{2}) \times \mathcal{H}^{*}(\partial D_{3})$ are given by
\begin{align*}
    	K^{*}= \begin{bmatrix} -\mathcal{K}^{*}_{D_{2}} & -\frac{\partial}{\partial \nu_{2}}\mathcal{S}_{D_{3}}
         \\\frac{\partial}{\partial \nu_{3}}\mathcal{S}_{D_{2}} & \mathcal{K}^{*}_{D_{3}} \end{bmatrix}, \quad
    	S = \begin{bmatrix} - \frac{\partial}{\partial\nu_{2}}\mathcal{S}^{1}_{D_{2},D_{1}} &
         - \frac{\partial}{\partial\nu_{2}}\mathcal{S}^{1}_{D_{3},D_{1}}
         \\ \frac{\partial}{\partial\nu_{3}}\mathcal{S}^{1}_{D_{2},D_{1}}
          & \frac{\partial}{\partial\nu_{3}}\mathcal{S}^{1}_{D_{3},D_{1}} \end{bmatrix}.
\end{align*}
    Assuming that $D_{1}$ is located far enough from $D_{3}$, i.e., $z_{0}$ is sufficiently large, we set
    $\eta = \frac{1}{z_{0}}$,
    then we observe that $S$ is in fact a compact perturbation of $\gamma_{\delta}\tilde{I} + K^{*}$ and we obtain
    the following estimate for the operator $S$.
    \begin{lem}\label{lem1}
      Let $\epsilon_{1}>0,\epsilon_{m}>0$ and define $Y=\mathcal{H}^{*}(\partial D_{2}) \times \mathcal{H}^{*}(\partial D_{3})$,
      then
      $\norm{S}_{Y} = O(\eta^{2})$ as $\eta \to 0$.
    \end{lem}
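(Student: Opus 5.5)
Each entry of $S$ factors through the sample. It has the form $\pm\frac{\partial}{\partial\nu_{i}}\mathcal{S}^{1}_{D_{j},D_{1}}$ with $i,j\in\{2,3\}$. By \eqref{S1D1D2} this operator is the composition
\[
\varphi \;\longmapsto\; \frac{\partial \mathcal{S}_{D_{j}}[\varphi]}{\partial\nu_{1}}\Big|_{\partial D_{1}} \;\longmapsto\; (\lambda_{1}I-\mathcal{K}^{*}_{D_{1}})^{-1}[\,\cdot\,] \;\longmapsto\; \frac{\partial}{\partial\nu_{i}}\mathcal{S}_{D_{1}}[\,\cdot\,]\Big|_{\partial D_{i}} .
\]
So it suffices to bound each of the three factors in norm, tracking the dependence on the separation $|z_{0}|=1/\eta$. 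We normalize $D_{1}=z_{0}+B$ with a fixed reference domain $B$, so that $\partial D_{1}$ has fixed size while $\operatorname{dist}(\partial D_{1},\partial D_{3})\sim 1/\eta$.

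\textbf{Middle factor.} Since $\epsilon_{1},\epsilon_{m}>0$, we have $|\lambda_{1}|>1/2$. Hence $\lambda_{1}$ lies outside the spectrum of $\mathcal{K}^{*}_{D_{1}}$ on $H^{-1/2}(\partial D_{1})$, and $(\lambda_{1}I-\mathcal{K}^{*}_{D_{1}})^{-1}$ is bounded. Its norm is independent of $z_{0}$ by translation invariance, so this factor is $O(1)$.

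\textbf{First factor.} For $\varphi\in\mathcal{H}^{*}(\partial D_{j})$ we have $\int_{\partial D_{j}}\varphi=0$, so we may subtract $G(x,0)$. Taylor expansion of $y\mapsto G(x,y)$ about $0$ for $x\in\partial D_{1}$ then gives
\[
\mathcal{S}_{D_{j}}[\varphi](x)=\int_{\partial D_{j}}\big(G(x,y)-G(x,0)\big)\varphi(y)\,d\sigma(y)=O\big(|x|^{-1}\big)\norm{\varphi}_{\mathcal{H}^{*}(\partial D_{j})}.
\]
Each $x$-derivative gains a further factor $|x|^{-1}$, so the normal derivative on $\partial D_{1}$ is $O(\eta^{2})\norm{\varphi}$ in $C^{1}(\partial D_{1})$, and hence in $H^{-1/2}(\partial D_{1})$. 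To make this rigorous we pair against the smooth kernel, using that $y\mapsto \nabla_{x}G(x,y)-\nabla_{x}G(x,0)$ is bounded in $H^{1/2}(\partial D_{j})$ by $C|x|^{-2}$ uniformly for $x\in\partial D_{1}$.

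\textbf{Last factor.} The same argument applies with the roles of the sample and probe reversed. The density $\psi=(\lambda_{1}I-\mathcal{K}^{*}_{D_{1}})^{-1}[\cdots]$ also has zero mean: the input $\partial_{\nu_{1}}\mathcal{S}_{D_{j}}[\varphi]$ is the normal derivative of a function harmonic in $D_{1}$, and $\mathcal{K}^{*}_{D_{1}}$ preserves $H^{-1/2}_{0}$. So we can again subtract $G(x,z_{0})$, and $\partial_{\nu_{i}}\mathcal{S}_{D_{1}}[\psi]$ on $\partial D_{i}$ is $O(\eta^{2})\norm{\psi}_{H^{-1/2}(\partial D_{1})}$ in $C^{1}(\partial D_{i})$.

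\textbf{Combining.} Multiplying the three bounds gives each entry as $O(\eta^{2})\cdot O(1)\cdot O(\eta^{2})=O(\eta^{4})$, which in particular is $O(\eta^{2})$, as claimed. Summing the four entries and using the equivalence of $\norm{\cdot}_{\mathcal{H}^{*}}$ with the $H^{-1/2}$ norm yields $\norm{S}_{Y}=O(\eta^{2})$. If we do not use the zero-mean property of $\psi$, the third factor only gives $O(\eta)$, and the result is still $O(\eta^{3})\subset O(\eta^{2})$. The lemma therefore holds even under the cruder estimate.

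The main obstacle is making the kernel bounds uniform in operator norm between the dual spaces $H^{-1/2}\to H^{-1/2}$, rather than pointwise. I would handle this by observing that both kernels are smooth on the product of the separated boundaries. Their $C^{1}\times C^{1}$ norms, after subtracting the leading constant term, are controlled by $C|z_{0}|^{-2}$. A smoothing kernel $k(x,y)$ then defines an operator $H^{-1/2}\to H^{1/2}\subset H^{-1/2}$ whose norm is bounded by the $C^{1}\times C^{1}$ norm of $k$, with constants depending only on the fixed curves.
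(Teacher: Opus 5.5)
Your proposal is correct and follows the same route as the paper: each entry of $S$ is factored through $\partial D_{1}$ via \eqref{S1D1D2}, the resolvent $(\lambda_{1}I-\mathcal{K}^{*}_{D_{1}})^{-1}$ is bounded independently of $z_{0}$, and the two transfer operators $\partial_{\nu_{1}}\mathcal{S}_{D_{j}}$ and $\partial_{\nu_{i}}\mathcal{S}_{D_{1}}$ are bounded using decay of the kernel. The only difference is that the paper uses the crude bound $|\nabla G(x,y)|=O(|x-y|^{-1})=O(\eta)$ on each outer factor, which gives exactly $O(\eta^{2})$, whereas you subtract $G(x,0)$ and $G(x,z_{0})$ using the zero-mean property and obtain the sharper, though unnecessary, bound $O(\eta^{4})$.
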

    \begin{proof}
      We estimate the entries of $S$ individually. Since the analysis is identical for each entry, we only provide the proof for
      $S_{11}$ as an example. Fix $\varphi_{2} \in \mathcal{H}^{*}(\partial D_{2})$ and let
      \begin{align*}
       \widetilde{\varphi}_{2} = (\lambda_{1}I-\mathcal{K}^{*})^{-1}
       \Big[\frac{\partial \mathcal{S}_{D_{2}}[\varphi_{2}]}{\partial \nu_{1}}\Big].
      \end{align*}
      Given that $\mathcal{S}_{D_{2}}[\varphi_{2}]$ is harmonic in $D_{1}$, it follows from Green's identity that $\int_{\partial D_{1}}
       \frac{\partial}{\partial \nu_{1}}\mathcal{S}_{D_{2}}[\varphi_{2}]=0$. Moreover, since $\epsilon_{1}>0,\epsilon_{m}>0$, the operator $(\lambda_{1}I-\mathcal{K}^{*})^{-1}: \mathcal{H}^{*}(\partial D_{1}) \to \mathcal{H}^{*}(\partial D_{1}) $ is bounded, and hence
       $\widetilde{\varphi}_{2} \in \mathcal{H}^{*}(\partial D_{1})$. From \eqref{S1D1D2}.$\mathcal{S}_{11}$ can be written as
       \[ \frac{\partial}{\partial\nu_{2}}\mathcal{S}^{1}_{D_{2},D_{1}}[\varphi_{2}](x)=
        \frac{\partial}{\partial\nu_{2}}\mathcal{S}_{D_{1}}[\widetilde{\varphi}_{2}](x)
        =\int_{\partial D_{1}} \frac{\partial G(x,y)}{\partial \nu_{2}(y)}\widetilde{\varphi}_{2}(y)d\sigma(y), \ x\in \partial D_{2}.\]
        Therefore,since
        \[ \bigg|\frac{\partial G(x,y)}{\partial \nu_{2}(y)}\bigg| = O\bigg(\frac{1}{|x-y|}\bigg) = O(\eta),\]
        we obtain
        \[\bigg\vert\bigg\vert{\frac{\partial}{\partial\nu_{2}}\mathcal{S}_{D_{1}}[\widetilde{\varphi}_{2}]}\bigg\vert
        \bigg\vert_{\mathcal{H}^{*}(\partial D_{2})}
        \le C\eta\norm{\widetilde{\varphi}_{2}}_{\mathcal{H}^{*}(\partial D_{1})}.\]
        Applying the same method again yields
        \begin{align*}
          \norm{\widetilde{\varphi}_{2}}_{\mathcal{H}^{*}(\partial D_{1})}  &=
          \bigg\vert\bigg\vert (\lambda_{1}I-\mathcal{K}^{*})^{-1}
          \Big[\frac{\partial \mathcal{S}_{D_{2}}[\varphi_{2}]}{\partial \nu_{1}}\Big]
          \bigg\vert\bigg\vert_{\mathcal{H}^{*}(\partial D_{1})}       \\
          &\le C\bigg\vert\bigg\vert \frac{\partial}{\partial \nu_{1}} \mathcal{S}_{D_{2}}[\varphi_{2}]
           \bigg\vert\bigg\vert_{\mathcal{H}^{*}(\partial D_{1})} \\
          &\le C\eta\norm{\varphi_{2}}_{\mathcal{H}^{*}(\partial D_{2})}.
        \end{align*}
        Thus we obtain $\norm{\mathcal{S}_{11}}_{\mathcal{H}^{*}(\partial D_{2})} = O(\eta^{2})$. By the same argument,
        we find that the operator norms of all entries of $S$ are $O(\eta^{2})$, hence $\norm{S}_{Y} = O(\eta^{2})$.
        This completes the proof.
    \end{proof}

    Next, we turn to the analysis of the properties of $\gamma_{\delta}\tilde{I} + K^{*}$. We consider the following equation
    \begin{align}
    	\nabla \cdot \big(\epsilon_{2}\chi(D_{2}) + \epsilon_{3}\chi(D_{3}\backslash \overline{D_{2}})
        +\epsilon_{m}\chi(\mathbb{R}^{2}\backslash\overline{D_{3}})\big) \nabla u = f
         \quad &in \ \mathbb{R}^{2}\backslash(\partial D_{2} \cup \partial D_{3}), \label{equf} 
    \end{align}
    where the source $f$ is supported in $\mathbb{R}^{2}\backslash\overline{D_{3}}$. Then the solution to the equation can be expressed as
    \begin{align}
    	u(x) = F(x) + \mathcal{S}_{D_{2}}[\varphi_{2}](x) + \mathcal{S}_{D_{3}}[\varphi_{3}](x)
    	\label{Ntotalpotential} 
    \end{align}
    where
    \begin{align}
    	F(x) = \int_{\mathbb{R}^{2}}G(x-y)f(y)dy, \quad x\in \mathbb{R}^{2}. \label{Nwpoential}
    \end{align}
   Since $u$ satisfies the transmission boundary conditions on $\partial D_{2}$
   and $\partial D_{3}$, hence the potential $\Phi$ satisfies the equation:
   \begin{align}
   	(\gamma_{\delta}\tilde{I} + K^{*})\Phi = h, \label{integral-equation-NONS}
   \end{align}
    where
    \[h = \begin{bmatrix} \frac{\partial F}{\partial \nu_{2}} \\  -\frac{\partial F}{\partial \nu_{3}} \end{bmatrix}.\]
    Since $D_{2}$ and $D_{3}$ are concentric disks, $\Phi$ can be expressed as
    \[ \Phi=\sum_{n \neq 0}
        \begin{bmatrix}
        	\varphi^{(n)}_{2} \\
        	\varphi^{(n)}_{3}
        \end{bmatrix}
    	e^{in\theta}.
    \]
    Here, $\varphi^{(n)}_{2}$ and $\varphi^{(n)}_{3}$ are the Fourier coefficients of $\varphi_{2}$ and $\varphi_{3}.$
    It is given in \cite{Ammari2013a} that
    \[ K^{*}\Phi=\sum_{n \neq 0}
    \begin{bmatrix}
    	\frac{\rho^{|n|-1}}{2}\varphi^{(n)}_{2} \\
    	\frac{\rho^{|n|+1}}{2}\varphi^{(n)}_{3}
    \end{bmatrix}
    e^{in\theta}.
    \]
    where
    \[ \rho = \frac{r_{2}}{r_{3}}.\]
    Given that the support of $f$ lies in $\mathbb{R}^{2}\backslash\overline{D_{3}}$, $F$
    is harmonic in $D_{3}$. Therefore, $F$ can be expanded as
    \begin{align} \label{Fourier-NWpotential}
    	F(x)=c-\sum_{n \neq 0}\frac{h^{(n)}}{|n|r^{|n|-1}_{3}}r^{|n|}e^{in\theta},
    \end{align}
    then we have
    \[ h=\sum_{n \neq 0}
    \begin{bmatrix}
    	h^{(n)} \\
    	-h^{(n)}\rho^{|n|-1}
    \end{bmatrix}
    e^{in\theta}.
    \]
    Hence, we obtain the solution $\Phi=[\varphi_{2}, \varphi_{3}]^{T}$ to \eqref{integral-equation-NONS}
    as
    \begin{align}
    	\begin{dcases}
    		\varphi_{2} = -2\sum\limits_{n \neq 0}  \frac{(2\gamma_{\delta}+1)\rho^{|n|-1}h^{(n)}}
    		{4\gamma^{2}_{\delta}-\rho^{2|n|}}e^{in\theta}, \\
    		\varphi_{3} = 2\sum\limits_{n \neq 0}\frac{(2\gamma_{\delta}+\rho^{2|n|})h^{(n)}}
    		{4\gamma^{2}_{\delta}-\rho^{2|n|}}e^{in\theta}.
    	\end{dcases} \label{solution-potential}
    \end{align}
    Using the conclusion in \cite{Ammari2013a}, it can be shown that
    \begin{align}\label{solution-S2S3}
       \mathcal{S}_{D_{2}}[\varphi_{2}](x) + \mathcal{S}_{D_{3}}[\varphi_{3}](x) =
       \sum_{n\ne 0} \frac{2(r^{2|n|}_{2}-r^{2|n|}_{3})\gamma_{\delta}}
       {|n|r^{|n|-1}_{3}(4\gamma^{2}_{\delta}-\rho^{2|n|})}\frac{h^{(n)}}{r^{|n|}}e^{in\theta}, \quad r=|x|>r_{3}.
    \end{align}

    For the solution \eqref{solution-potential} to integral system \eqref{integral-equation-NONS}, we have
    a crucial observation, which is stated in the following lemma. This will help us analyze the limiting properties
    of the solution to the original problem \eqref{The integral equation2}.
    \begin{lem} \label{lem2} 
      Let $f$ be a source function supported in $\mathbb{R}^{2}\backslash\overline{B_{r^{*}}}$ where
      $r^{*}=\frac{r_{3}}{\rho^{2}}$ and let $F$ be the Newtonian
      potential of $f$. Let $\Phi_{\delta}$ denote the solution to \eqref{integral-equation-NONS},
      then it follows that
      \[\lim_{\delta \to 0}|\Phi_{\delta}| \le C,\]
      where $C$ is a constant independent of $\delta$.
    \end{lem}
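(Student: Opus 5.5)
The plan is to work directly from the explicit Fourier representation \eqref{solution-potential} of $\Phi_\delta=[\varphi_2,\varphi_3]^T$. The aim is to bound the $\mathcal{H}^*$-norms (equivalently weighted $\ell^2$ norms of the Fourier coefficients, since on a circle of radius $r$ one has $\norm{\varphi}^2_{\mathcal{H}^*}\asymp \sum_{n\neq0}|\varphi^{(n)}|^2/|n|$ up to $r$-dependent constants) uniformly in $\delta$.

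\textbf{Step 1: decay of the source coefficients.} Because $\operatorname{supp} f\subset \mathbb{R}^2\setminus\overline{B_{r^*}}$, the Newtonian potential $F$ is harmonic in $B_{r^*}$. Hence the expansion \eqref{Fourier-NWpotential} converges on $B_{r^*}$, not merely on $D_3$. Comparing with the Taylor series of $F$ at a radius $r^*-\epsilon'$ then gives
\[
\frac{|h^{(n)}|}{|n|\,r_3^{|n|-1}}\,(r^*-\epsilon')^{|n|}\le C_{\epsilon'} ,
\qquad\text{that is,}\qquad
|h^{(n)}|\le C|n|\,r_3^{|n|}(r^*-\epsilon')^{-|n|}\approx C|n|\,\rho^{2|n|}\tau^{|n|},
\]
with $\tau<1$ close to $1$. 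I will use the fact that $F$ is actually harmonic in a slightly larger disk $B_{r^*+\epsilon''}$. This holds because the support of $f$ is compact and separated from $\overline{B_{r^*}}$, as I read the hypothesis. It lets me take $\tau<1$ strictly, so I get $|h^{(n)}|\le C\rho^{2|n|}\tau^{|n|}$.

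\textbf{Step 2: uniform bound on the multipliers.} Write $\gamma_\delta=\frac{i\delta}{2(2-i\delta)}$. Then $4\gamma_\delta^2=-\delta^2/(2-i\delta)^2$, which is approximately $-\delta^2/4$ for small $\delta$. The key lower bound is
\[
|4\gamma_\delta^2-\rho^{2|n|}|\ge c\,\max(\delta^2,\rho^{2|n|}),
\]
which holds because $4\gamma_\delta^2$ is essentially negative real while $\rho^{2|n|}$ is positive. One checks the argument of $(2-i\delta)^{-2}$ is small for $\delta$ small. With this bound I can estimate the two components separately:
\[
|\varphi_2^{(n)}|\le C\frac{\rho^{|n|-1}|h^{(n)}|}{\rho^{2|n|}}\le C\rho^{|n|-1}\tau^{|n|},
\qquad
|\varphi_3^{(n)}|\le C\frac{(\delta+\rho^{2|n|})|h^{(n)}|}{\max(\delta^2,\rho^{2|n|})}\le C\tau^{|n|}\bigl(1+\delta\rho^{-2|n|}\cdot\min(1,\rho^{2|n|}/\delta^2)\bigr).
\]
In the last factor, $\delta\rho^{-2|n|}\min(1,\rho^{2|n|}/\delta^2)=\min(\delta\rho^{-2|n|},1/\delta)$. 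For $\varphi_3$ this term needs more care, and I will redo the bound by splitting the modes according to whether $\rho^{2|n|}$ is larger or smaller than $\delta$:
\begin{itemize}
\item[(i)] When $\rho^{2|n|}\ge\delta$, the ratio is at most $2$.
\item[(ii)] When $\rho^{2|n|}<\delta$, the ratio is at most $\delta/\max(\delta^2,\rho^{2|n|})\cdot$, which is bounded by $\delta\rho^{-2|n|}$ if $\rho^{2|n|}\ge\delta^2$ and by $1/\delta$ otherwise. Combined with the factor $\rho^{2|n|}$ already present in $|h^{(n)}|$, the product is at most $\delta\le1$ in the first subcase and $\rho^{2|n|}/\delta\le\delta$ in the second.
\end{itemize}
So in every case $|\varphi_3^{(n)}|\le C\tau^{|n|}$ uniformly in $\delta$.

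\textbf{Step 3: summation.} The coefficient bounds of Step 2 are geometrically decaying and independent of $\delta$. Summing them in the $\mathcal{H}^*$ norms gives $\norm{\Phi_\delta}_Y\le C$, and passing to $\delta\to0$ yields the claimed estimate.

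The main obstacle is Step 1, specifically obtaining a strict geometric gain $\tau<1$. At exactly the critical radius $r^*$ the multiplier for $\varphi_3$ is only $O(1)$ per mode, so the summation would fail without that gain. If the support of $f$ touches $\partial B_{r^*}$, I would instead fall back on the finiteness of $\sum|n|^{-1}|h^{(n)}|^2\rho^{-4|n|}$, i.e. square-integrability of $F$'s trace on $\partial B_{r^*}$, which still gives a $\delta$-uniform $\ell^2$ bound.
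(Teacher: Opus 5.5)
Your proposal is correct and is essentially the paper's argument. Harmonicity of $F$ in a disk $B_{r^*+2\epsilon}$ gives geometric decay of $h^{(n)}/(|n|r_3^{|n|-1})$, and the lower bound $|4\gamma_\delta^2-\rho^{2|n|}|\ge\rho^{2|n|}$ then controls the coefficients in \eqref{solution-potential} uniformly in $\delta$. That harmonicity is automatic, since $\supp f$ is closed and disjoint from the compact set $\overline{B_{r^*}}$; your fallback for a touching support is never needed.

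Two small slips are harmless. At radius $r^*-\epsilon'$, Step 1 actually gives $\rho^{2|n|}\tau^{-|n|}$, which is exactly why the larger disk is needed. In Step 2 you dropped the factor $\rho^{2|n|}$ from $|h^{(n)}|$, so the case split is unnecessary: $|\varphi_3^{(n)}|\le C(\delta+\rho^{2|n|})\tau^{|n|}$ follows directly.
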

    \begin{proof}
      Since $\supp(f) \subset \mathbb{R}^{2}\backslash\overline{B_{r^{*}}}$, then $F$ is a harmonic function
      in a neighborhood of $\overline{B_{r^{*}}}$ and the power series of $F$, which is given by \eqref{Fourier-NWpotential},
      converges for $r<r^{*} + 2\epsilon $ for some $\epsilon>0$. Therefore, we can obtain
      \begin{align}\label{harmonic-series-converge}
        \frac{h^{(n)}}{|n|r^{|n|-1}_{3}} \le C\frac{1}{(r^{*}+\epsilon)^{|n|}}.
      \end{align}
      For sufficiently small $\delta$, together with \eqref{gama-delta}, we have
      \begin{align}\label{lem2-tuidao-estmi}
        |4\gamma^{2}_{\delta}-\rho^{2|n|}| = \bigg|\frac{-4\delta^{2}(4-\delta^{2}+2i\delta)}{(4+\delta^{2})^{2}}-\rho^{2|n|}\bigg|
        > \rho^{2|n|}.
      \end{align}
      Combining \eqref{solution-potential},\eqref{harmonic-series-converge} and \eqref{lem2-tuidao-estmi},it follows that
     \begin{align*}
     	|\varphi_{2}|&\le 2\sum\limits_{n \neq 0}  \frac{|(2\gamma_{\delta}+1)\rho^{|n|-1}h^{(n)}|}
     	{|4\gamma^{2}_{\delta}-\rho^{2|n|}|} \le
     	\sum\limits_{n \neq 0} \frac{4C|n|}{r_{2}}
     	\bigg(\frac{r^{*}}{r^{*}+\epsilon}\bigg)^{|n|}< \infty. \\
     	 |\varphi_{3}|&\le 2\sum\limits_{n \neq 0}\frac{|(2\gamma_{\delta}+\rho^{2|n|})h^{(n)}|}
     	 {|4\gamma^{2}_{\delta}-\rho^{2|n|}|} \le
     	 \sum\limits_{n \neq 0} \frac{4C|n|}{r_{3}}
     	 \bigg(\frac{r^{*}}{r^{*}+\epsilon}\bigg)^{|n|}< \infty.
     \end{align*}
     This completes the proof.
    \end{proof}
     Lemma~\ref{lem2} shows that for the integral equation \eqref{integral-equation-NONS}, if $F$
     is harmonic in $B_{r_{0}}$ $(r_{0}>r^{*})$, then the solution $\Phi$ is uniformly bounded as
     $\delta \to 0$.

     We now return to the original integral equation \eqref{The integral equation2}, we next employ an asymptotic expansion technique. This allows us to transform the problem of solving it into solving an infinite sequence of integral equations \eqref{integral-equation-NONS}, through which we analyze the limiting properties of the solution. As in lemma~\ref{lem2}, we start with the source term. The source in the integral equation \eqref{The integral equation2} is $u_{D_{1}}$. From \cite{Ammari2007}, $u_{D_{1}}$ can be expanded as
     \begin{align}\label{uD1-GCPT}
       u_{D_{1}}(x) = u^{i}(x) + \sum_{|\alpha|,|\beta| \ge 1} \frac{1}{\alpha ! \beta !}\partial^{\alpha}u^{i}(z_{0})
       M_{\alpha \beta}(\lambda_{1},D_{1})\partial^{\beta}G(x,z_{0}), \quad |x-z_{0}| \to \infty,
     \end{align}
     where $M_{\alpha \beta}(\lambda_{1},D_{1})$ is given by
     \[ M_{\alpha \beta}(\lambda_{1},D_{1}) = \int_{\partial D_{1}} (y-z_{0})^{\beta}(\lambda_{1}I-\mathcal{K}^{*}_{D_{1}})^{-1}
     \Big[\frac{\partial (x-z_{0})^{\alpha}}{\partial \nu}\Big](y)d\sigma(y), \quad \alpha,\beta \in \mathbb{N}^{2}.\]
      Let $(r_{x},\theta_{x})$ be the polar coordinates of $x$. Acooriding to \cite{Ammari2013b}, for $|x| < |z_{0}|$,
      \begin{align*}
        G(x,z_{0}) = \sum_{m=0}^{\infty} \frac{-1}{2\pi  m} \frac{\cos(m\theta_{z_{0}}-m\theta_{x})r^{m}_{x}}{r^{m}_{z_{0}}}
        = \sum_{m=0}^{\infty} \eta^{m}H_{m}(x),
      \end{align*}
      where $H_{m}(x)$ is an $m$-th order homogeneous harmonic polynomial defined on $B_{r}$, with $r<\frac{1}{\eta}$. Therefore, we have
      \begin{align*}
      \partial^{\beta}G(x,z_{0}) = \sum_{m=|\beta|}^{\infty} \eta^{m}\partial^{\beta}H_{m}.
      \end{align*}
       Since $u^{i} = x\cdot d$, only the $|\alpha|=1$ in the summation of \eqref{uD1-GCPT} are nonzero. Let $C_{\beta}$ denote
       the constants in this summation and combine the harmonic polynomials of the same order. Then \eqref{uD1-GCPT} becomes
       \begin{align}\label{ASY-uD1-GCPT}
         u_{D_{1}} = x\cdot d + \sum_{|\beta|=1}^{\infty}C_{\beta}\sum_{m=|\beta|}^{\infty} \eta^{m}\partial^{\beta}H_{m}(x)
         = x\cdot d + \sum_{m=1}^{\infty}\eta^{m} \widetilde{H}_{m},
       \end{align}
       where
       \[\widetilde{H}_{m} = \sum_{|\beta| = 1}^{m}C_{\beta}\partial^{\beta}H_{m}.\]
   Consequently, the right-hand side $b$ of the integral equation \eqref{The integral equation2}
   can be expanded as
   \begin{align} \label{bbbb}
   	b= \begin{bmatrix}
   		\frac{ \partial u_{D_{1}}}{\partial \nu_{2}} \\
   		-\frac{ \partial u_{D_{1}}}{\partial \nu_{2}}
   	\end{bmatrix}
   =\begin{bmatrix}
   	d\cdot \nu_{2} \\
   	-d\cdot \nu_{3} \\
   \end{bmatrix}
   +\sum_{m=1}^{\infty}\eta^{m}  \begin{bmatrix}
   	\frac{\partial \widetilde{H}_{m} }{\partial \nu_{2}} \\
   	-\frac{\partial \widetilde{H}_{m} }{\partial \nu_{3}}
   \end{bmatrix}
   =b_{0} + \sum_{m=2}^{\infty}\eta^{m}b_{m}.
   \end{align}
    We also perform an expansion for the solution $\Phi$ of \eqref{The integral equation2}
    as follows
    \begin{align}\label{ASY-Phi}
    	\Phi = \sum_{m=0}^{\infty} \eta^{m}\Phi_{m},
    \end{align}
    where the unknown coefficients $\Phi_{m}$ remain to be determined. Substituting \eqref{bbbb} and
    \eqref{ASY-Phi} into the integral equation \eqref{The integral equation2} and
     using Lemma~\ref{lem1}, which gives $\norm{S}_{Y}=O(\eta^{2})$, then the zeroth-order term
     $\Phi_{0}$ satisfies
     \begin{align*}
     	(\gamma_{\delta}\tilde{I} + K^{*})\Phi_{0} = b_{0},
     \end{align*}
    because $u^{i}$ is harmonic in $\mathbb{R}^{2}$, Lemma~\ref{lem2} ensures that $\Phi_{0}$
    remains uniformly bounded as $\delta \to 0$. Furthermore, for $d=(\cos(\theta_{d}),\sin(\theta_{d}))$, it follows from \eqref{solution-potential} that $\Phi_{0}=(\Phi_{0}(1),\Phi_{0}(2))$ can be explicitly solved as
    \begin{align*}
    	\begin{dcases}
    		\Phi_{0}(1) = \frac{4\gamma_{\delta}+2}
    		{4\gamma^{2}_{\delta}-\rho^{2}}\cos(\theta-\theta_{d}), \\
    		\Phi_{0}(2) = \frac{-4\gamma_{\delta}-2\rho^{2}}
    		{4\gamma^{2}_{\delta}-\rho^{2}}\cos(\theta-\theta_{d}).
    	\end{dcases}
    \end{align*}
    For the first-order term $\Phi_{1}$, since $b_{1}=0$, it follows from \eqref{solution-potential}
    that $\Phi_{1}=0$. Defining $S = \eta^{2}\widetilde{S}$, then the second-order term
    $\Phi_{2}$ satisfies
    \begin{align*}
    	(\gamma_{\delta}\tilde{I} + K^{*})\Phi_{2} = -\widetilde{S}\Phi_{0} + b_{2}.
    \end{align*}
    Given that $z_{0}>r^{*}$ and sufficiently large, the entries of $S$ and $\widetilde{H}_{2}$
    are harmonic in $B_{r^{*}}$, and $\Phi_{0}$ is uniformly bounded as $\delta \to 0$. Hence,
    by Lemma~\ref{lem2}, $\Phi_{2}$ is also uniformly bounded as $\delta \to 0$. Repeating this argument, we obtain that $\Phi$ is uniformly bounded as $\delta \to 0$. We state this result as the following lemma.
    \begin{lem}
    	\label{lem3} 
    	Let $\Phi_{\delta}$ denote the solution to \eqref{The integral equation2},
    	then it holds that
    		\[\lim_{\delta \to 0}|\Phi_{\delta}| \le C,\]
    		where $C$ is a constant independent of $\delta$.
    \end{lem}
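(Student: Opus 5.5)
The plan is to make rigorous the formal expansion sketched just before the statement. We solve \eqref{The integral equation2} by a perturbation argument around the operator $T_\delta:=\gamma_\delta\tilde I+K^*$, whose behaviour on harmonic data is controlled by Lemma~\ref{lem2}. Rather than summing the formal series \eqref{ASY-Phi} term by term, we work with the fixed-point form
\begin{align*}
\Phi_\delta = T_\delta^{-1} b - T_\delta^{-1} S\,\Phi_\delta .
\end{align*}
The aim is to show that $T_\delta^{-1}S$ has norm less than $1$, uniformly in $\delta$ small, on a suitable closed subspace, and that $T_\delta^{-1}b$ is uniformly bounded there. A Neumann series then gives $\Phi_\delta=(I+T_\delta^{-1}S)^{-1}T_\delta^{-1}b$ with a $\delta$-independent bound.

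\textbf{Step 1 (data term).} Write $b=b_0+\sum_{m\ge2}\eta^m b_m$ as in \eqref{bbbb}. Then $u_{D_1}$ restricted to a neighbourhood of $\overline{B_{r^*}}$ is harmonic there, because $D_1\cap B_{r^*}=\emptyset$ and $z_0$ is large. Its Fourier coefficients $h^{(n)}$ on $\partial D_3$ therefore obey the geometric decay \eqref{harmonic-series-converge}. Lemma~\ref{lem2}, applied with $F=u_{D_1}$ (Lemma~\ref{lem2} only uses harmonicity of $F$ in $B_{r^*+2\epsilon}$), gives $\|T_\delta^{-1}b\|\le C$ uniformly in $\delta$.

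\textbf{Step 2 (perturbation term).} For any $\Psi\in Y$, the function $S\Psi$ is the normal trace on $\partial D_2\cup\partial D_3$ of $\mathcal S_{D_1}[\widetilde\varphi]$, with $\widetilde\varphi$ a density on $\partial D_1$; this is seen by writing out the entries of $S$ using \eqref{S1D1D2}. This function is harmonic in $\mathbb R^2\setminus\partial D_1\supset \overline{B_{r^*+2\epsilon}}$. Hence $S\Psi$ is exactly of the type covered by Lemma~\ref{lem2}. Making Lemma~\ref{lem2} quantitative is the key point: its proof yields $\|T_\delta^{-1}h\|\le C\sup_n |h^{(n)}|(r^*+\epsilon)^{|n|}/(|n|r_3^{|n|-1})$. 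We bound this weighted coefficient norm for $h=S\Psi$ by the sup of $\mathcal S_{D_1}[\widetilde\varphi]$ on the circle $|x|=r^*+\epsilon$, using Cauchy-type estimates for harmonic functions. We then bound that sup by $C\eta^2\|\Psi\|_Y$, repeating the kernel estimate of Lemma~\ref{lem1} with $\partial B_{r^*+\epsilon}$ in place of $\partial D_2,\partial D_3$; this works since the distance from $D_1$ is still of order $1/\eta$. This gives $\|T_\delta^{-1}S\|_{Y\to Y}\le C\eta^2$ with $C$ independent of $\delta$. Choosing $z_0$ large (so $\eta$ small) makes this less than $1/2$.

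\textbf{Step 3.} Combining the two steps, $\|\Phi_\delta\|_Y\le 2\|T_\delta^{-1}b\|_Y\le C$. Letting $\delta\to0$ gives the claim. This also justifies a posteriori that the series \eqref{ASY-Phi} converges, since $\Phi_m$ is the $\eta^m$ coefficient of the Neumann series.

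The main obstacle is Step 2. $T_\delta^{-1}$ is not uniformly bounded on all of $Y$: its norm blows up like $\rho^{-2|n|}$ on high modes. The uniform bound therefore depends entirely on the range of $S$ consisting of traces of functions harmonic beyond $r^*$ with quantitative control. The delicate point is that the estimate in Lemma~\ref{lem2} gains $(r^*/(r^*+\epsilon))^{|n|}$ only in a weighted norm. We must check that the weight exactly compensates the factor $|4\gamma_\delta^2-\rho^{2|n|}|^{-1}\lesssim\rho^{-2|n|}$. We must also check that the constants in the Cauchy estimate stay independent of $\delta$ and grow at most polynomially in $|n|$, so that the resulting series converges.
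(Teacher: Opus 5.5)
Your proposal is correct, but it closes the argument by a different mechanism than the paper.

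\textbf{The paper's route.} The paper expands $b$ and $\Phi$ formally in powers of $\eta$ and writes $S=\eta^{2}\widetilde S$. It then solves $(\gamma_\delta\tilde I+K^{*})\Phi_m=b_m-\widetilde S\Phi_{m-2}$ order by order, applying Lemma~\ref{lem2} at each order (``repeating this argument''). That shows each $\Phi_m$ is bounded in $\delta$. It never controls how these bounds grow with $m$, so it does not by itself bound $\sum_m\eta^m\Phi_m$. Also, $\widetilde S$ still depends on $\eta$, so \eqref{ASY-Phi} is not a genuine power series.

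\textbf{Your route.} You treat $S$ as a single perturbation. The crude estimate $|4\gamma_\delta^2-\rho^{2|n|}|>\rho^{2|n|}$ in the proof of Lemma~\ref{lem2} bounds $T_\delta^{-1}h$ by your weighted coefficient norm, with geometric factor $(r^*/(r^*+\epsilon))^{|n|}$, because $r_3\rho^{-2}=r^*$. Combining this with a Cauchy estimate on $|x|=r^*+\epsilon$ and far-field kernel bounds gives $\norm{T_\delta^{-1}S}_{Y\to Y}\le C\eta^2$ uniformly in $\delta$. The Neumann series then yields a genuine uniform bound $\norm{\Phi_\delta}_Y\le 2\norm{T_\delta^{-1}b}_Y$, with an explicit smallness condition on $\eta$ in place of ``$z_0$ sufficiently large''.

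\textbf{What each buys.} The paper's expansion gives explicit leading-order information ($\Phi_0$ in closed form, $\Phi_1=0$), which your route does not produce. Your route gives an actual proof of uniform boundedness. Your closing remark identifying $\Phi_m$ with $\eta^m$-coefficients of the Neumann series is only heuristic, for the same reason that $S$ is not literally $\eta^2$ times an $\eta$-independent operator. The lemma does not depend on that remark.

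\textbf{One detail in Step 2.} The sup of $W=\mathcal S_{D_1}[\widetilde\varphi]$ on the circle involves the kernel $G$ itself, which is $O(\log(1/\eta))$. It is not the $O(\eta)$ normal-derivative kernel of Lemma~\ref{lem1}. You have two easy fixes:
\begin{itemize}
\item use that $\widetilde\varphi$ has zero mean, so $G(x,y)$ can be replaced by $G(x,y)-G(x,z_0)=O(\eta)$; or
\item run the Cauchy estimate on $\partial_r W$ instead.
\end{itemize}
Either way $\sup_{|x|=r^*+\epsilon}|W|\le C\eta^2\norm{\Psi}_Y$ follows, and the contraction argument goes through.
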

    We next investigate the behavior of $u_{D_{2},D_{3}}$ as $\delta \to 0$. We first decompose $u_{D_{2},D_{3}}$ as
    \[u_{D_{2},D_{3}}(x) = V_{\delta}(x) + T_{\delta}(x),\]
    where
    \begin{align*}
    \begin{dcases}
    	V_{\delta}(x) = \mathcal{S}_{D_{2}}[\varphi_{2}](x) + + \mathcal{S}_{D_{3}}[\varphi_{3}](x), \\
      T_{\delta}(x) =  \mathcal{S}^{1}_{D_{2},D_{1}}[\varphi_{2}](x)
     	+ \mathcal{S}^{1}_{D_{3},D_{1}}[\varphi_{3}](x).
    	\end{dcases}
    \end{align*}
    Using this notation, we have the following theorem.
    \begin{thm}\label{thm6}
      Let the center $z_{0}$ of $D_{1}$ be sufficiently far from $D_{3}$ such that $z_{0}>r^{*}$ and $D_{1}\cap B_{r^{*}}=\emptyset$. Then
       \begin{align} \label{thm-result}
       \sup_{|x| \ge r^{*}} |V_{\delta} + T_{\delta}| \to 0 \quad as \quad \delta \to 0.
       \end{align}
    \end{thm}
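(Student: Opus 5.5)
The argument handles the two pieces of $u_{D_{2},D_{3}}=V_{\delta}+T_{\delta}$ separately, using the explicit Fourier representation \eqref{solution-S2S3} for $V_{\delta}$ and the uniform bound of Lemma~\ref{lem3} for $T_{\delta}$. The starting observation is that \eqref{solution-S2S3} gives the exterior field of the annulus for any density solving $(\gamma_{\delta}\tilde I+K^{*})\Phi=h$. Each coefficient carries the factor $\gamma_{\delta}$, which is $O(\delta)$ by \eqref{gama-delta}.

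\textbf{Step 1: rewrite the exterior field.} Write the full system \eqref{The integral equation2} as
\[
(\gamma_{\delta}\tilde I+K^{*})\Phi_{\delta}=b-S\Phi_{\delta}=:\tilde b_{\delta}.
\]
By Lemma~\ref{lem3}, $\Phi_{\delta}$ is bounded uniformly in $\delta$. The entries of $S\Phi_{\delta}$ are normal derivatives of $\mathcal{S}_{D_{1}}[\cdot]$, which are harmonic outside $D_{1}$ and hence in a neighbourhood of $\overline{B_{r^{*}}}$. The entries of $b$ come from $u_{D_{1}}$, which is harmonic there as well. So $\tilde b_{\delta}$ has exactly the form $h$ in \eqref{integral-equation-NONS} for an effective potential $F_{\delta}$ harmonic in $B_{r^{*}+2\epsilon}$. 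Its Fourier coefficients satisfy \eqref{harmonic-series-converge} with a constant independent of $\delta$; this uniformity comes from Lemma~\ref{lem3} together with $\norm{S}_{Y}=O(\eta^{2})$ from Lemma~\ref{lem1}. Therefore $V_{\delta}$ is given by \eqref{solution-S2S3} with $h^{(n)}$ replaced by $\tilde h^{(n)}_{\delta}$.

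\textbf{Step 2: show $V_{\delta}\to0$ on $|x|\ge r^{*}$.} For $r\ge r^{*}$, bound the $n$-th term of \eqref{solution-S2S3} by
\[
\frac{2r_{3}^{2|n|}|\gamma_{\delta}|\,|\tilde h^{(n)}_{\delta}|}{|n|r_{3}^{|n|-1}\,|4\gamma_{\delta}^{2}-\rho^{2|n|}|\,(r^{*})^{|n|}}.
\]
Split the sum at a cutoff $N$.

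For $|n|\le N$, the denominator is bounded below by $\rho^{2N}/2$ once $\delta$ is small. The block is then $O(|\gamma_{\delta}|)$, hence $O(\delta)$.

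For $|n|>N$, use $|4\gamma_{\delta}^{2}-\rho^{2|n|}|\ge c\,\max(|\gamma_{\delta}|^{2},\rho^{2|n|})$; the $\max(|\gamma_{\delta}|^{2},\cdot)$ part relies on $\gamma_{\delta}^{2}$ being essentially negative real, which makes $-\rho^{2|n|}$ add rather than cancel. Then
\[
\frac{|\gamma_{\delta}|}{|4\gamma_{\delta}^{2}-\rho^{2|n|}|}\le\frac{C}{\rho^{|n|}}\cdot\frac{|\gamma_{\delta}|\rho^{|n|}}{|\gamma_{\delta}|^{2}+\rho^{2|n|}}\le \frac{C}{2\rho^{|n|}}.
\]
Combined with \eqref{harmonic-series-converge}, the term is bounded by
\[
C\,|n|\left(\frac{r_{3}}{\rho\,r^{*}}\right)^{|n|}\left(\frac{r^{*}}{r^{*}+\epsilon}\right)^{|n|}\cdot(\text{the }r_{3}\text{-scaling}).
\]
The critical radius $r^{*}=r_{3}/\rho^{2}$ is chosen exactly so that the geometric factors balance to at most $1$, leaving the decaying factor $(r^{*}/(r^{*}+\epsilon))^{|n|}$. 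The tail is therefore uniformly small in $\delta$ once $N$ is large. Choosing $N$ first and then $\delta$ gives $\sup_{|x|\ge r^{*}}|V_{\delta}|\to0$.

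\textbf{Step 3: handle $T_{\delta}$.} By \eqref{S1D1D2},
\[
T_{\delta}=\mathcal{S}_{D_{1}}(\lambda_{1}I-\mathcal{K}^{*}_{D_{1}})^{-1}\Big[\frac{\partial V_{\delta}}{\partial\nu_{1}}\Big].
\]
Since $D_{1}\subset\{|x|\ge r^{*}\}$, Step 2 (applied on a slightly smaller radius, with interior estimates for the harmonic function $V_{\delta}$) gives $\partial V_{\delta}/\partial\nu_{1}\to0$ in $\mathcal{H}^{*}(\partial D_{1})$. The resolvent is bounded because $\epsilon_{1},\epsilon_{m}>0$. Hence $T_{\delta}\to0$ uniformly, using the decay of $\mathcal{S}_{D_{1}}$ on mean-zero densities.

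\textbf{Main obstacle.} The hard step is the tail estimate in Step 2: showing that the exponent bookkeeping at $r^{*}$ closes, and that the effective coefficients $\tilde h^{(n)}_{\delta}$ obey \eqref{harmonic-series-converge} uniformly in $\delta$. If the exponents do not balance at exactly $r^{*}$, I would first try to use the $\epsilon$-margin from harmonicity in $B_{r^{*}+2\epsilon}$, and otherwise shrink the statement to $|x|\ge r^{*}+\epsilon'$.
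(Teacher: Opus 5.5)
Your proposal is correct and follows the paper's proof essentially step for step. It uses the same rewriting $(\gamma_{\delta}\tilde{I}+K^{*})\Phi_{\delta}=b-S\Phi_{\delta}$ with effective potential $u_{D_{1}}+T_{\delta}$, whose Fourier coefficients are controlled uniformly in $\delta$ via Lemma~\ref{lem3}. It uses the same termwise bound on $V_{\delta}$ from a lower bound on $|4\gamma_{\delta}^{2}-\rho^{2|n|}|$, which the paper imports from Theorem 5.4 of the cited resonance work. It also treats $T_{\delta}=\mathcal{S}_{D_{1}}(\lambda_{1}I-\mathcal{K}^{*}_{D_{1}})^{-1}[\partial V_{\delta}/\partial\nu_{1}]$ the same way.

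The step you flagged as the main obstacle closes with room to spare. At $|x|=r^{*}$ your geometric factors multiply to $\rho^{3|n|}$, so the $n$-th term is bounded by $C\bigl(r_{2}/(r^{*}+\epsilon)\bigr)^{|n|}$ uniformly in $\delta$, which is exactly the paper's dominating bound; your cutoff-$N$ argument is just dominated convergence made explicit.
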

    \begin{proof}
      Regarding $T_{\delta} + u_{D_{1}}$ as the source term, \eqref{The integral equation2} can be written as
      \begin{align*}
        (\gamma_{\delta}\tilde{I} + K^{*})\Phi = b - S\Phi.
      \end{align*}
      Analogous to the treatment of \eqref{integral-equation-NONS}, we expand $F=T_{\delta} + u_{D_{1}}$ using Fourier series as in \eqref{Fourier-NWpotential},
      with the coefficient $h^{(n)}$ replaced by $\widetilde{h}^{(n)}$. Then from \eqref{solution-S2S3}, we obtain
      \begin{align}\label{Vdelta}
       V_{\delta}(x) =
       \sum_{n\ne 0} \frac{2(r^{2|n|}_{2}-r^{2|n|}_{3})\gamma_{\delta}}
       {|n|r^{|n|-1}_{3}(4\gamma^{2}_{\delta}-\rho^{2|n|})}\frac{\widetilde{h}^{(n)}}{r^{|n|}}e^{in\theta}, \quad r=|x|>r_{3}.
    \end{align}
    It should be emphasized that, unlike the case of \eqref{solution-S2S3}, we do not obtain an analytic expression for \eqref{Vdelta};
    the expression is only formal. The reason is that $\widetilde{h}$ is a coupling term that contains information about the density
    $\Phi$. From the previous discussion, $T_{\delta} + u_{D_{1}}$ is harmonic in $B_{r^{*}}$ and by Lemma~\ref{lem3}, $\Phi$ is
    uniformly bounded as $\delta \to 0$. Therefore, we obtain
    \begin{align}\label{thm3-tuidao}
        \frac{\widetilde{h}^{(n)}}{|n|r^{|n|-1}_{3}} \le C\frac{1}{(r^{*}+\epsilon)^{|n|}}.
      \end{align}
      where $C$ is a constant independent of $\delta$. If $|x|=r^{*}=\frac{r^{3}_{3}}{r^{2}_{2}}$,
      \begin{align*}
       \frac{2(r^{2|n|}_{2}-r^{2|n|}_{3})\gamma_{\delta}}
       {|n|r^{|n|-1}_{3}(4\gamma^{2}_{\delta}-\rho^{2|n|})}\frac{\widetilde{h}^{(n)}}{r^{|n|}}
       = \frac{2(1-\rho^{2|n|})\gamma_{\delta}}{(\rho^{|n|}-4\gamma^{2}_{\delta}\rho^{-|n|})}
       \frac{r^{|n|}_{2}\widetilde{h}^{(n)}}{|n|r_{3}^{|n|-1}}
      \end{align*}
      By Theorem 5.4 of \cite{Ammari2013a}, it follows that
      \begin{align*}
       \bigg| \frac{2(1-\rho^{2|n|})\gamma_{\delta}}{(\rho^{|n|}-4\gamma^{2}_{\delta}\rho^{-|n|})}\bigg| \le
       2\bigg( \frac{\delta}{4+\delta^{2}} \rho^{-|n|} + \frac{1}{\delta}\rho^{|n|}\bigg)^{-1}
      \end{align*}
      According to \eqref{thm3-tuidao}, we have
      \begin{align}\label{V-dacay}
        |V_{\delta}(x)| \le C\sum_{n\ne 0}\bigg( \frac{\delta}{4+\delta^{2}} \rho^{-|n|} + \frac{1}{\delta}\rho^{|n|}\bigg)^{-1}
        \bigg( \frac{r_{2}}{r^{*}+\epsilon}\bigg)^{|n|}
      \end{align}
      and hence
      \begin{align*}
        |V_{\delta}(x)| \to 0 \quad as \quad \delta \to 0.
      \end{align*}
      We now turn to the estimation of $T_{\delta}(x)$. From \eqref{S1D1D2}, $T_{\delta}(x)$ can be written as
      \begin{align*}
        T_{\delta}(x)= \mathcal{S}_{D_{1}}(\lambda_{1}I-\mathcal{K}^{*}_{D_{1}})^{-1}
    	\frac{\partial V_{\delta}}{\partial\nu_{1}}(x),
      \end{align*}
      $\mathcal{S}_{D_{1}}(\lambda_{1}I-\mathcal{K}^{*}_{D_{1}})^{-1}$ is a bounded linear operator and $D_{1}\cap B_{r^{*}}=\emptyset$.
      In order to estimate the normal derivative of $V_{\delta}$ on $D_{1}$, we first estimate the gradient of
      $V$ on $\mathbb{R}^{2} \backslash \overline{B_{r^{*}}}$. A direct computation of \eqref{Vdelta} yields
      \begin{align*}
       |\nabla V_{\delta}(x)| \le C\sum_{n\ne 0}\bigg( \frac{\delta}{4+\delta^{2}} \rho^{-|n|} + \frac{1}{\delta}\rho^{|n|}\bigg)^{-1}
       \frac{|n|}{r^{*}}\bigg( \frac{r_{2}}{r^{*}+\epsilon}\bigg)^{|n|}
      \end{align*}
      Thus, it follows that
      \begin{align*}
      |T_{\delta}(x)| \to 0 \quad as \quad \delta  \to 0.
      \end{align*}
     Since $V_{\delta}$ and $\nabla V_{\delta}$ is harmonic in $|x|>r_{3}$ and tends to $0$ as $|x| \to \infty$, applying the maximum principle together with the continuity of the operator $\mathcal{S}_{D_{1}}(\lambda_{1}I-\mathcal{K}^{*}_{D_{1}})^{-1}$, we
    obtain \eqref{thm-result}. This completes the proof.
    \end{proof}
    From \Thmref{thm5} and \eqref{totalpotential_decompose}, we have
    \[\lim_{\delta \to 0} \sup_{x \ge r^{*}} |u(x)| = |u_{D_{1}}(x)|,\]
     which shows that for small $\delta$, the total field $u$ can be approximated by $u_{D_{1}}$. Moreover, if $u_{D_{2},D_{3}}$ decays to zero sufficiently fast, then the iterative numerical reconstruction of $D_{1}$ can be carried out by simply solving $u_{D_{1}}$ at each step, rather than repeatedly solving the original equation for $u$ at each step, thereby significantly improving computational efficiency. Numerical evidence is provided in Section 5.

\section{Shape reconstruction via active cloaking}
\subsection{Reconstruction algorithm}

Two reconstruction strategies are considered for recovering the unknown penetrable sample $\partial D_1$ from data on $\partial B_R$. The reduced-domain method, neglects the probe entirely during the inversion process. The full-domain method, in contrast, solves the full probe-sample coupled system at each iteration.

The common framework for both methods is as follows. The boundary $\partial D_1$ is parameterized in star-shaped form as
\begin{align*}
	x(t) = r(t)(\cos t, \sin t), \quad 0 \le t \le 2\pi,
\end{align*}
with
\begin{align*}
	r(t) = \alpha_0 + \sum_{j=1}^{M} \left[ \alpha_j \cos(jt) + \alpha_{j+M} \sin(jt) \right],
\end{align*}
and the unknown coefficient vector is $\boldsymbol{\alpha} = [\alpha_0, \alpha_1, \ldots, \alpha_{2M}]^\top \in \mathbb{R}^{2M+1}$. We employ the Levenberg--Marquardt algorithm to minimize the residual between the measured and computed fields on $\partial B_R$. At each iteration, the update $\delta\boldsymbol{\alpha}$ solves
\begin{align*}
	\left( J^\top J + \lambda C \right) \delta\boldsymbol{\alpha} = -J^\top \big( u(\boldsymbol{\alpha}) - u^{\rm meas} \big),
\end{align*}
where $J$ is the Jacobian, $\lambda > 0$ is the regularization parameter, and $C$ is the diagonal matrix associated with the $H^1$-seminorm penalization of the Fourier coefficients. The linear system is solved by the conjugate gradient method, with the initial guess chosen as a circle of radius $r_0$. The two methods differ only in the forward solver: the reduced method excludes the probe, while the full-order method includes it. The corresponding algorithms are summarized in Algorithm~\ref{alg:reduced} and Algorithm~\ref{alg:full}.

\begin{algorithm}
	\caption{Reduced-domain method}
	\label{alg:reduced}
	\begin{algorithmic}
		\State{Input: Measured data $u^{\rm meas}$ on $\partial B_R$, initial radius $r_0$, regularization parameter $\lambda>0$, tolerance $\tau>0$, maximum iterations $K_{\max}$.}
		\State{Output: Reconstructed coefficient vector $\boldsymbol{\alpha}_{\rm red}$.}
		\State{Initialize $\boldsymbol{\alpha}^{(0)}=[r_0,0,\ldots,0]^\top$, $k=0$.}
		\While{$k<K_{\max}$ and $\|u(\boldsymbol{\alpha}^{(k)})-u^{\rm meas}\|>\tau$}
		\State{Solve \eqref{uD1} to obtain $u(\boldsymbol{\alpha}^{(k)})$ and the Jacobian $J(\boldsymbol{\alpha}^{(k)})$.}
		\State{Solve $(J^\top J+\lambda C)\delta\boldsymbol{\alpha}=-J^\top(u(\boldsymbol{\alpha}^{(k)})-u^{\rm meas})$ using CG.}
		\State{Update $\boldsymbol{\alpha}^{(k+1)}=\boldsymbol{\alpha}^{(k)}+\delta\boldsymbol{\alpha}$.}
		\State{Set $k\leftarrow k+1$.}
		\EndWhile
		\Return $\boldsymbol{\alpha}_{\rm red}=\boldsymbol{\alpha}^{(k)}$.
	\end{algorithmic}
\end{algorithm}

\begin{algorithm}
	\caption{Full-domain method}
	\label{alg:full}
	\begin{algorithmic}
		\State{Input: Measured data $u^{\rm meas}$ on $\partial B_R$, known probe geometry and material properties, initial radius $r_0$, regularization parameter $\lambda>0$, tolerance $\tau>0$, maximum iterations $K_{\max}$.}
		\State{Output: Reconstructed coefficient vector $\boldsymbol{\alpha}_{\rm full}$.}
		\State{Initialize $\boldsymbol{\alpha}^{(0)}=[r_0,0,\ldots,0]^\top$, $k=0$.}
		\While{$k<K_{\max}$ and $\|u(\boldsymbol{\alpha}^{(k)})-u^{\rm meas}\|>\tau$}
		\State{Solve \eqref{Integral-equations-matrix} to obtain $u(\boldsymbol{\alpha}^{(k)})$ and the Jacobian $J(\boldsymbol{\alpha}^{(k)})$.}
		\State{Solve $(J^\top J+\lambda C)\delta\boldsymbol{\alpha}=-J^\top(u(\boldsymbol{\alpha}^{(k)})-u^{\rm meas})$ using CG.}
		\State{Update $\boldsymbol{\alpha}^{(k+1)}=\boldsymbol{\alpha}^{(k)}+\delta\boldsymbol{\alpha}$.}
		\State{Set $k\leftarrow k+1$.}
		\EndWhile
		\Return $\boldsymbol{\alpha}_{\rm full}=\boldsymbol{\alpha}^{(k)}$.
	\end{algorithmic}
\end{algorithm}
\subsection{Numerical results and discussions}
In all numerical examples presented in this section, the measurement data are collected on $\partial B_R$ with $R=0.6$, and the initial guess for the reconstruction is chosen as a circle of radius $r_0=0.4$. The forward problem is solved using the Nystr\"om method, where each boundary is discretized into $N$ uniformly distributed collocation points. The reconstruction parameters are set to $K_{\max}=10$ and $M=7$.

We first consider the small-probe regime, where the reduced-domain method is applied to compare the reconstructions using a bare probe and a cloaked probe. We then turn to the large-probe regime, where both the reduced and full-domain methods are implemented and compared. Several probe configurations are examined, including single-layer, single multi-layer, and multiple multi-layer probes.
\subsubsection{Small-sized probe}

We first validate the reduced method in the small-probe regime. The unknown sample is set as a disk of radius $0.1$ centered at the origin. To avoid inverse crime, synthetic data are generated with $2N=64$ collocation points, while the inversion uses $N=32$. A single incident wave with direction $d=(\cos\theta_d,\sin\theta_d)$, $\theta_d=0$, is employed. The reduced-domain method is applied to both configurations below,

\begin{itemize}
	\item[(i)] \textbf{Single probe.} A bare probe (radius $0.005$, $\varepsilon_2=2$, $\varepsilon_3=\varepsilon_m=1$) and a single-layer cloaked probe ($D_2$: radius $0.005$, $\varepsilon_2=1$; $D_3$: radius $0.01$, $\varepsilon_3=-1+0.001i$) are placed at $(1,0)$, respectively.
	\item[(ii)] \textbf{Three probes.} Three bare probes and three single-layer cloaked probes placed at $(1,0)$, $(0,1)$ and $(0,-1)$.
\end{itemize}

\begin{figure}[htbp]
	\centering
	
	\subfloat
	{   \includegraphics[width=0.4\textwidth]{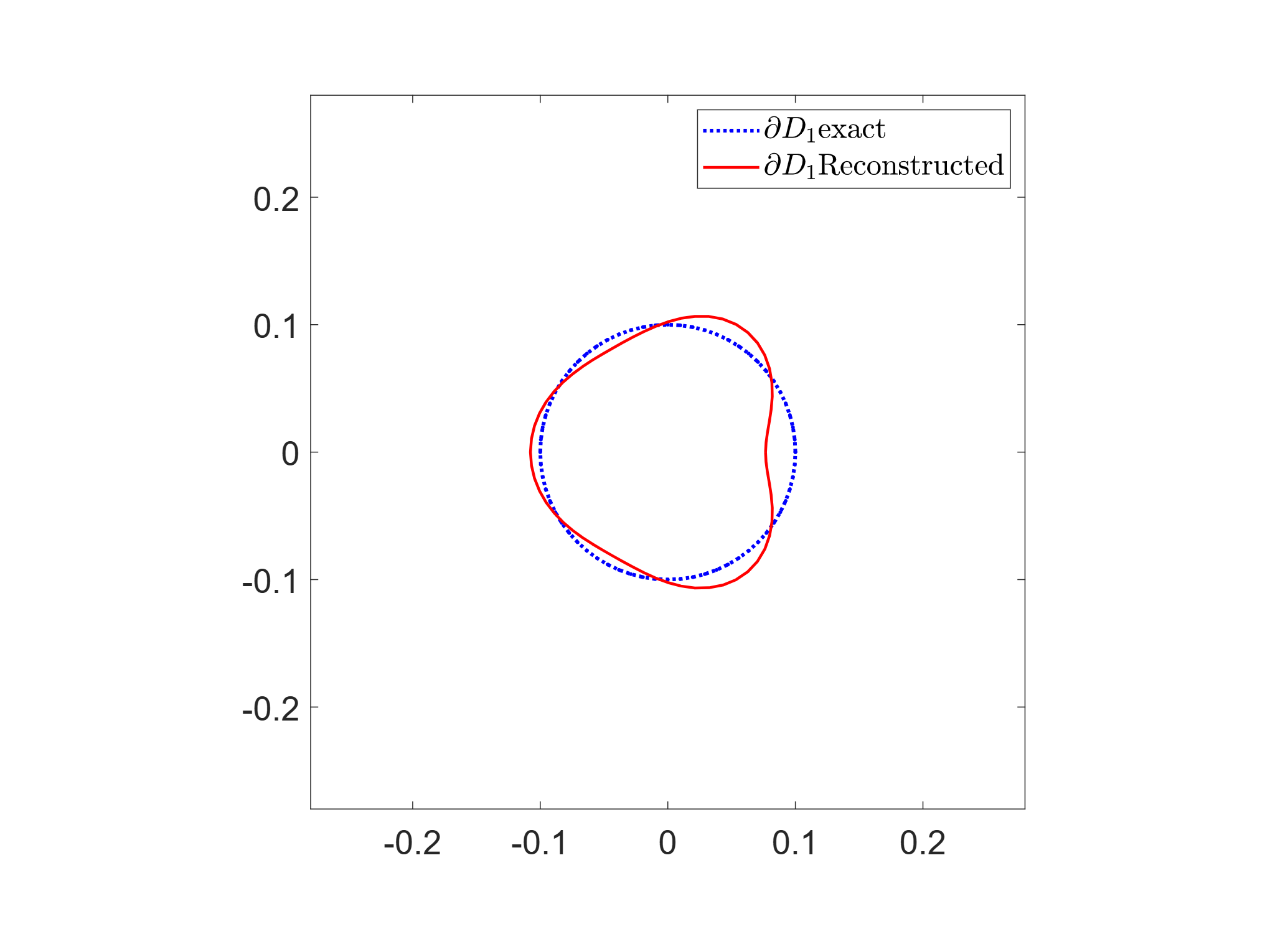}
	}\hspace{-1.5cm}
	{   \includegraphics[width=0.4\textwidth]{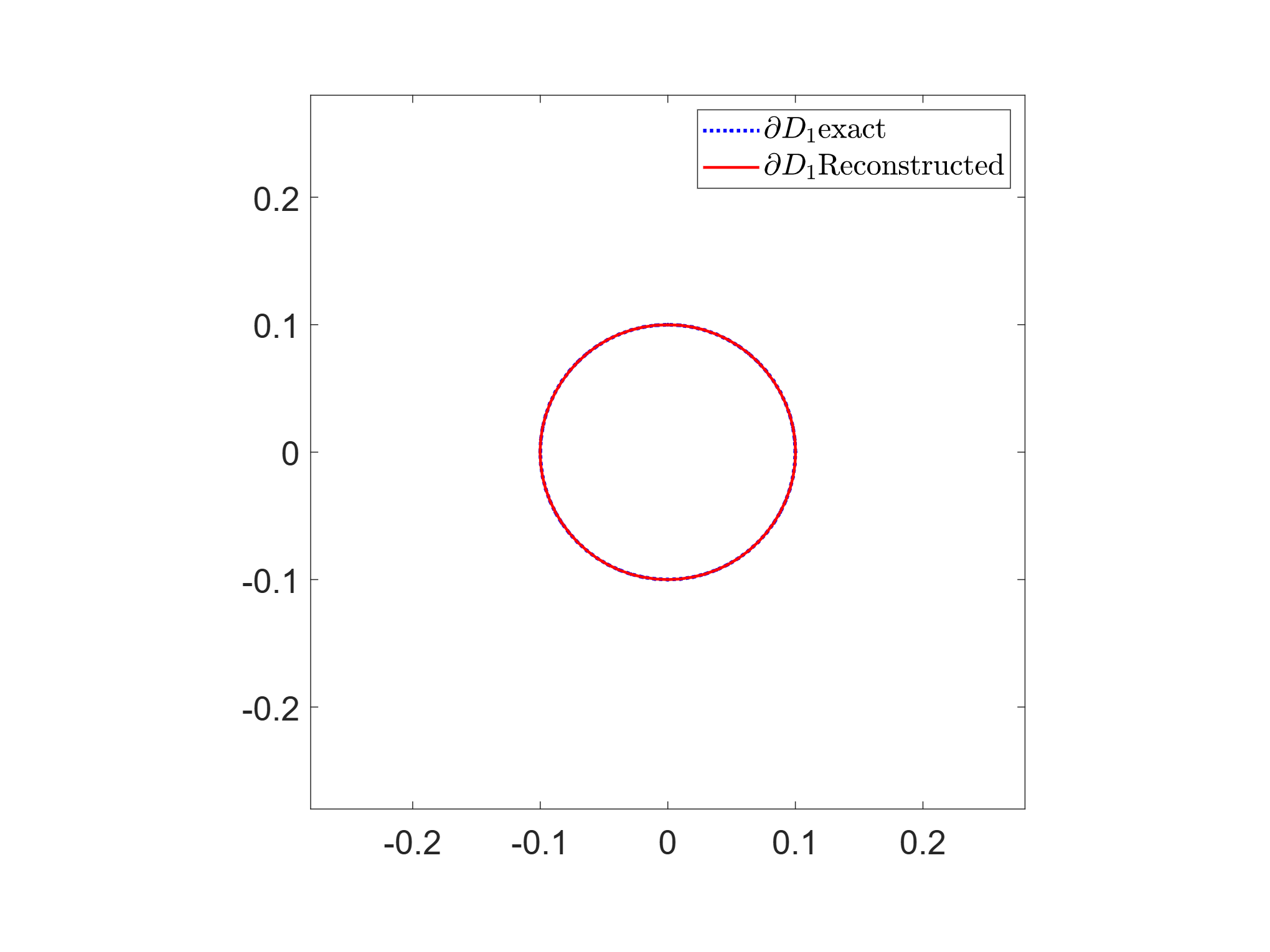}
	} \\ [-0.5cm]
	{   \includegraphics[width=0.4\textwidth]{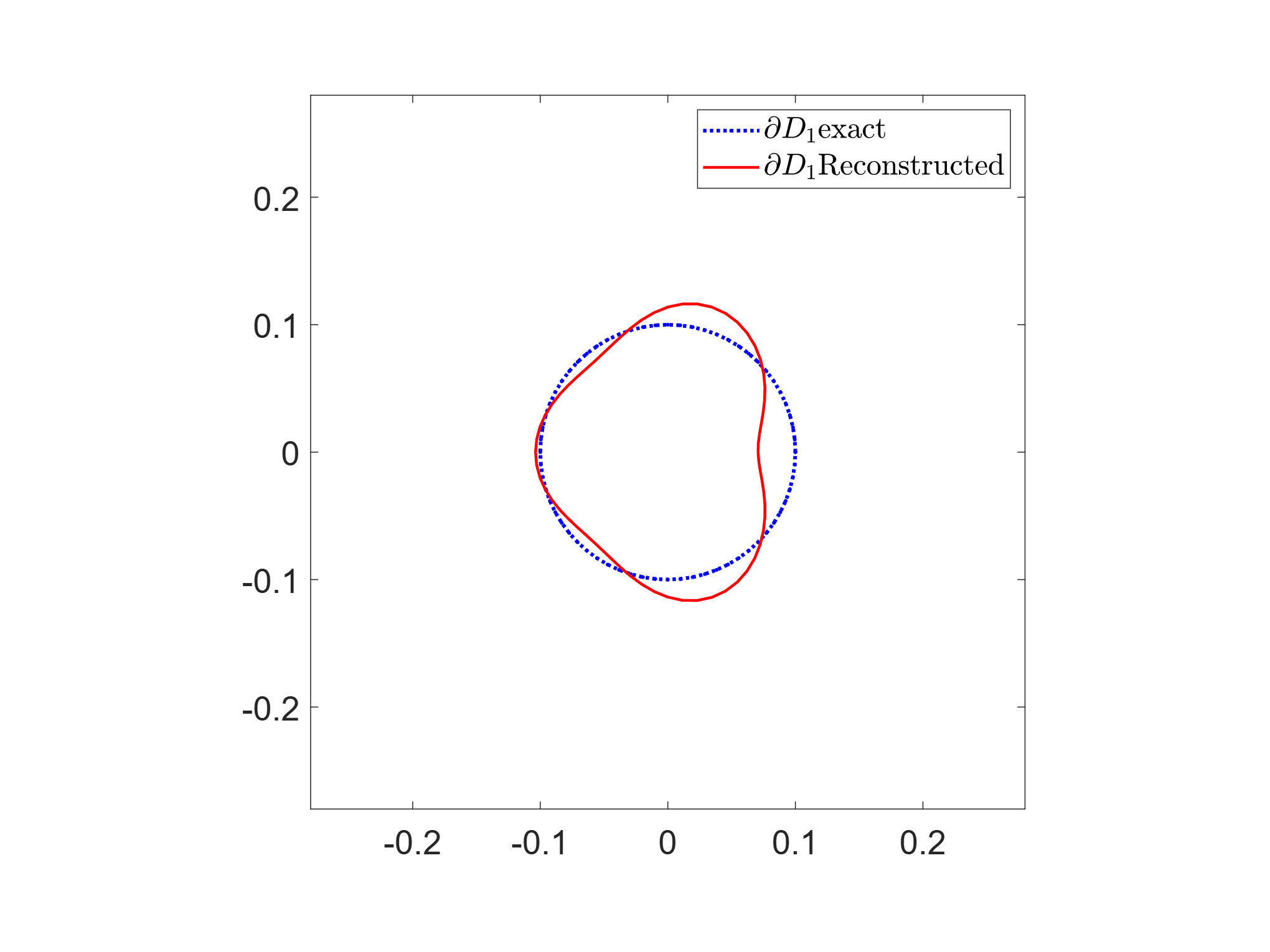}
	}\hspace{-1.5cm}
	{   \includegraphics[width=0.4\textwidth]{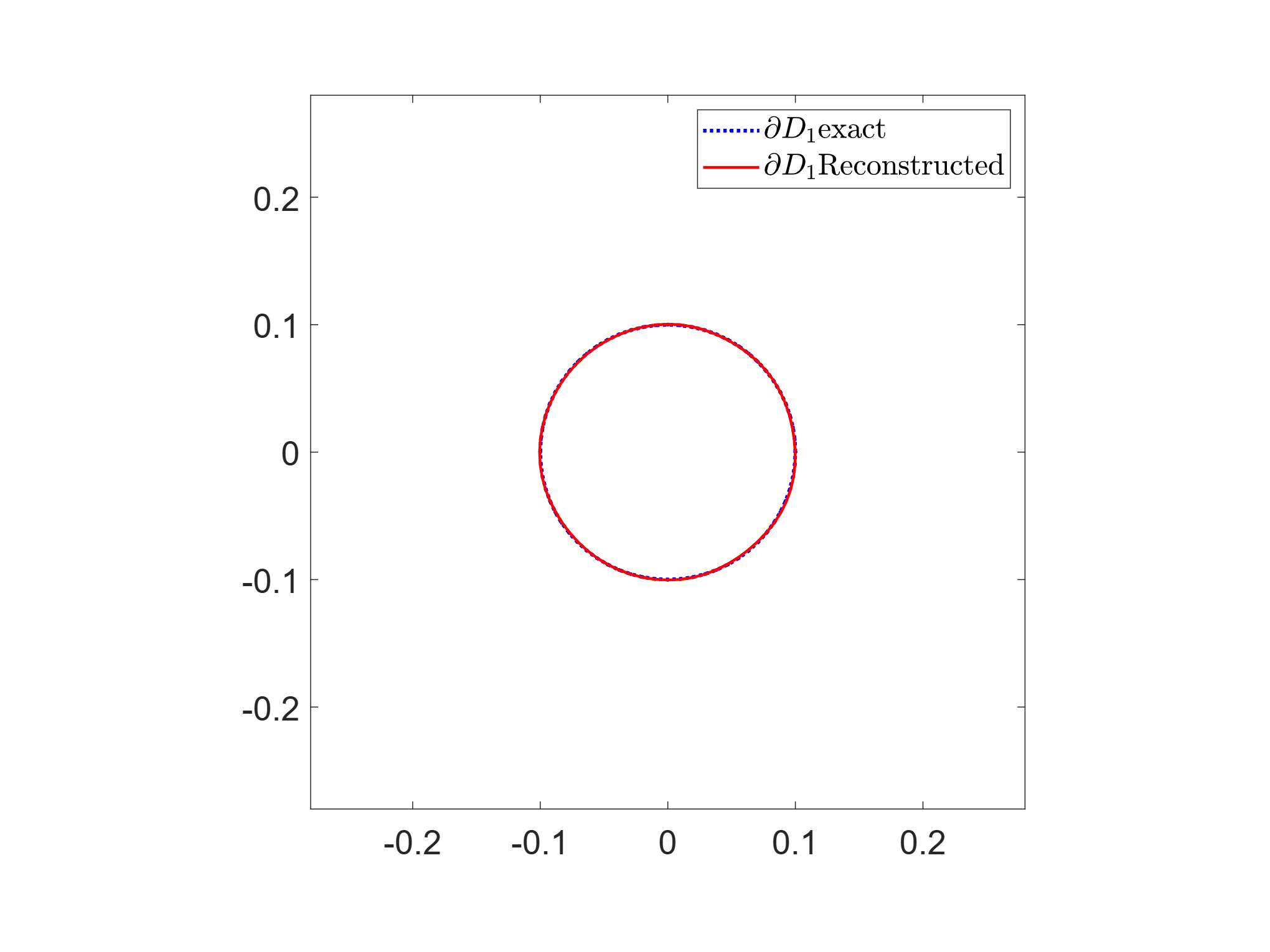}
	}
	\caption{Reconstruction results for the small-probe regime. Left: bare probe; right: cloaked probe; top: single probe; bottom: three probes.}
	\label{fig:small_probe}
\end{figure}
Figure~\ref{fig:small_probe} presents the reconstruction results for both configurations. The left column corresponds to the bare probe and the right column to the cloaked probe; the top row shows the single-probe results and the bottom row the multi-probe results. In all cases, the cloaked probe yields significantly more accurate reconstructions than the bare probe, demonstrating that the cloaking design effectively removes probe artifacts, allowing the reduced-domain method to achieve accurate reconstructions.
\subsubsection{Large-sized probe}
We now turn to the large-probe regime. In this setting, for an uncloaked probe, the reduced-domain method is no longer valid, and the full-domain method becomes necessary to ensure reconstruction accuracy.We will compare the reduced-remain and full-domain methods for various cloaked probe configurations to evaluate the cloaking performance in the large-probe regime.
\paragraph{Single-layer probe}
    We begin with the single-layer probe configuration. For ease of numerical result presentation, we let
	    \[\mathcal{N}_{1} = \norm{u_{D_{1}}-u^{i}}_{L^{2}(B_{R})}, \quad \mathcal{N}_{23} = \norm{u_{D_{2},D_{3}}}_{L^{2}(B_{R})}.\]
	    The incident wave is given by $d=(\cos (\pi/4), \sin (\pi/4))$. $D_{1}$ is a circle of radius 0.1 centered at the origin;
	    $D_{2}$ and $D_{3}$ are concentric disks centered at $(1,0)$ with radii $r_{2}=0.1$ and $r_{3} = 0.3$. The permittivities are chosen as $\epsilon_{1} = 1.2,\epsilon_{2} = \epsilon_{m} =1$ and $\epsilon_{3} = -s+ i\delta$ with $\delta=10^{-3}$. The measurement data are collected on
	    $B_{R}$ and $N=32$.
	
	    Figure~\ref{fig:J_s} shows the behavior of $\mathcal{J}(s)$ for $s\in[0,3]$. The functional attains its minimum at $s=1$, and blows up near $s=0.5$ and $s=2$. The blow-up occurs because $\lambda_{2}(s)$ approaches the eigenvalue $1/2 \rho$, as can be seen from \eqref{solution-potential} and \eqref{solution-S2S3}. Furthermore, we can directly compute that blow-up occurs at
	    \[s_{1} = \frac{1-\rho}{1+\rho}<1, \quad s_{2}=\frac{1+\rho}{1-\rho}>1.\]
	
     Figure~\ref{fig:N_vs_delta} shows $\mathcal{N}_{23}\to 0$ as $\delta\to 0$ with $s=1$, while $\mathcal{N}_1$ remains constant since $u_{D_1}$ is independent of $\delta$ and $s$. This confirms \Thmref{thm6}.
    \begin{figure}[htbp]
   \centering
 \subfloat[Variation of the minimized functional $\mathcal{J}$ with respect to $s$.]
  {   \includegraphics[width=0.4\textwidth]{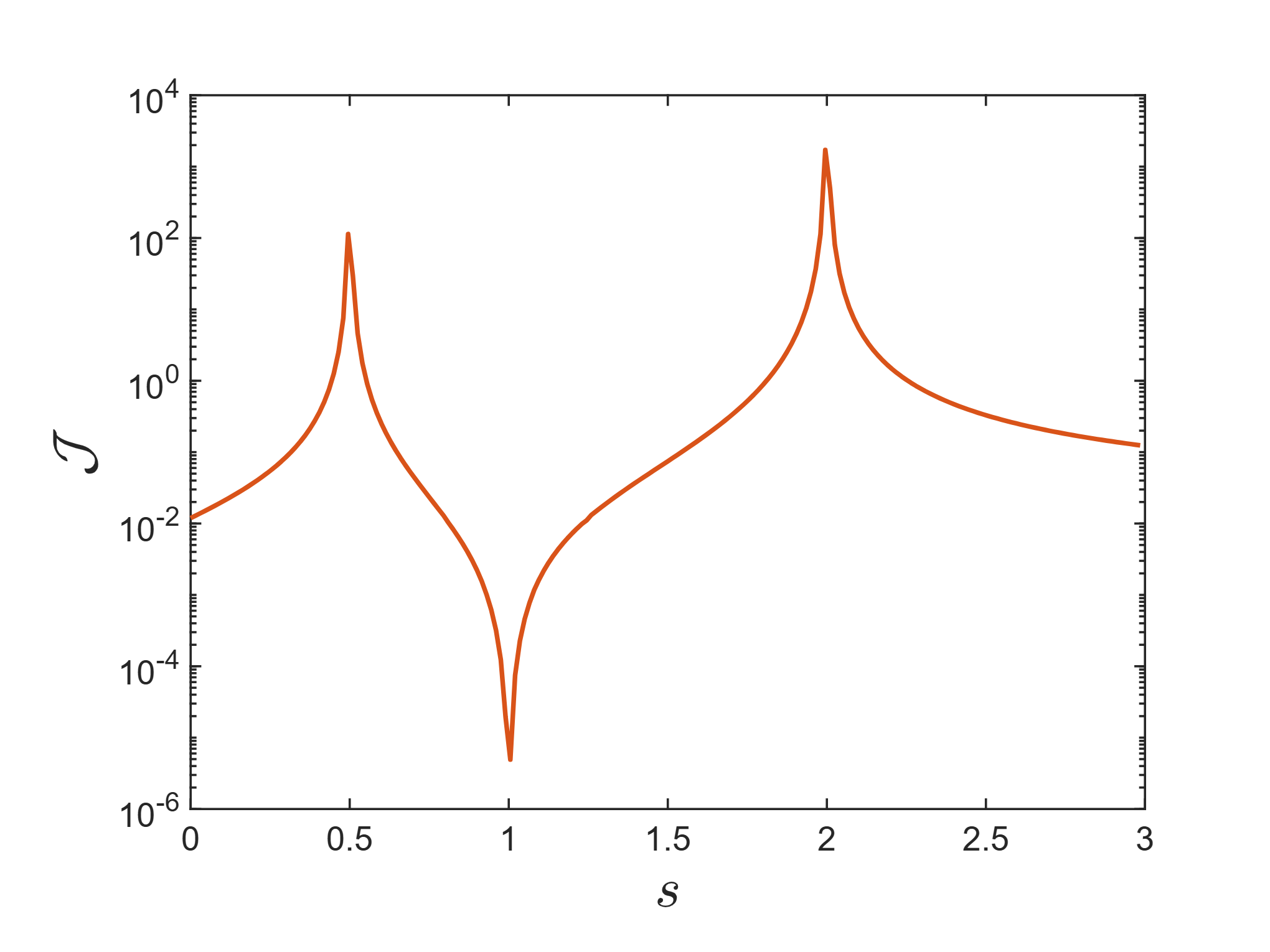} \label{fig:J_s}
 }
  \subfloat[Behavior of $\mathcal{N}_{1}$ and $\mathcal{N}_{23}$ as $\delta \to 0$.]
  {   \includegraphics[width=0.4\textwidth]{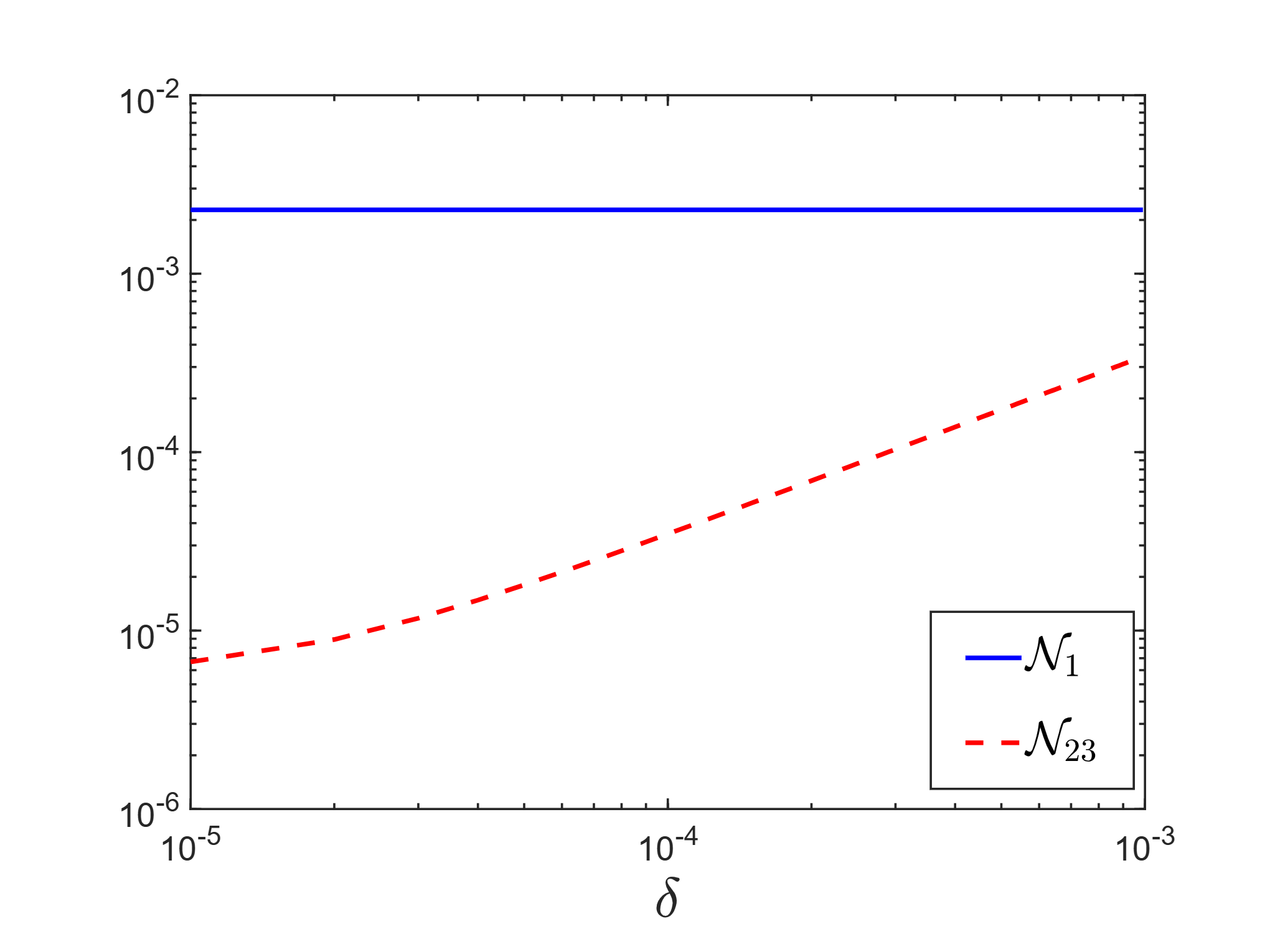} \label{fig:N_vs_delta}
  }
 \caption{(a) $\mathcal{J}(s)$: minimum at $s=1$, blow-up near $s=0.5$ and $s=2$. (b) For $s=1$, $\mathcal{N}_{23}\to 0$ as $\delta\to 0$, while $\mathcal{N}_1$ remains constant.}
 \label{fig:1}
\end{figure}

    With the above results, we are now ready to apply the reduced-domain method for the inversion. Keeping the same parameter settings as in the previous example, we consider three different shapes to be reconstructed: a circle of radius 0.1
     centered at the origin, an ellipse
     \[ x(t) = 0.2\cos{t} + 0.1\sin{t},\]
     and a kite-shaped curve
     \[x(t) = -0.05 + 0.1\cos{t} + 0.05\cos{2t} + 0.15\sin{t}.\]
     for $0\le t \le 2\pi$. Both the reduced-domain and full-domain methods are employed for the reconstruction. For the circle, a single incident direction $\theta_d=0$ is used; for the ellipse and kite-shaped curve, four incident directions $\theta_d=0,\pi/4,\pi/2,3\pi/4$ are employed.
      \begin{figure}[htb]
   \centering

 \subfloat
 {   \includegraphics[width=0.37\textwidth]{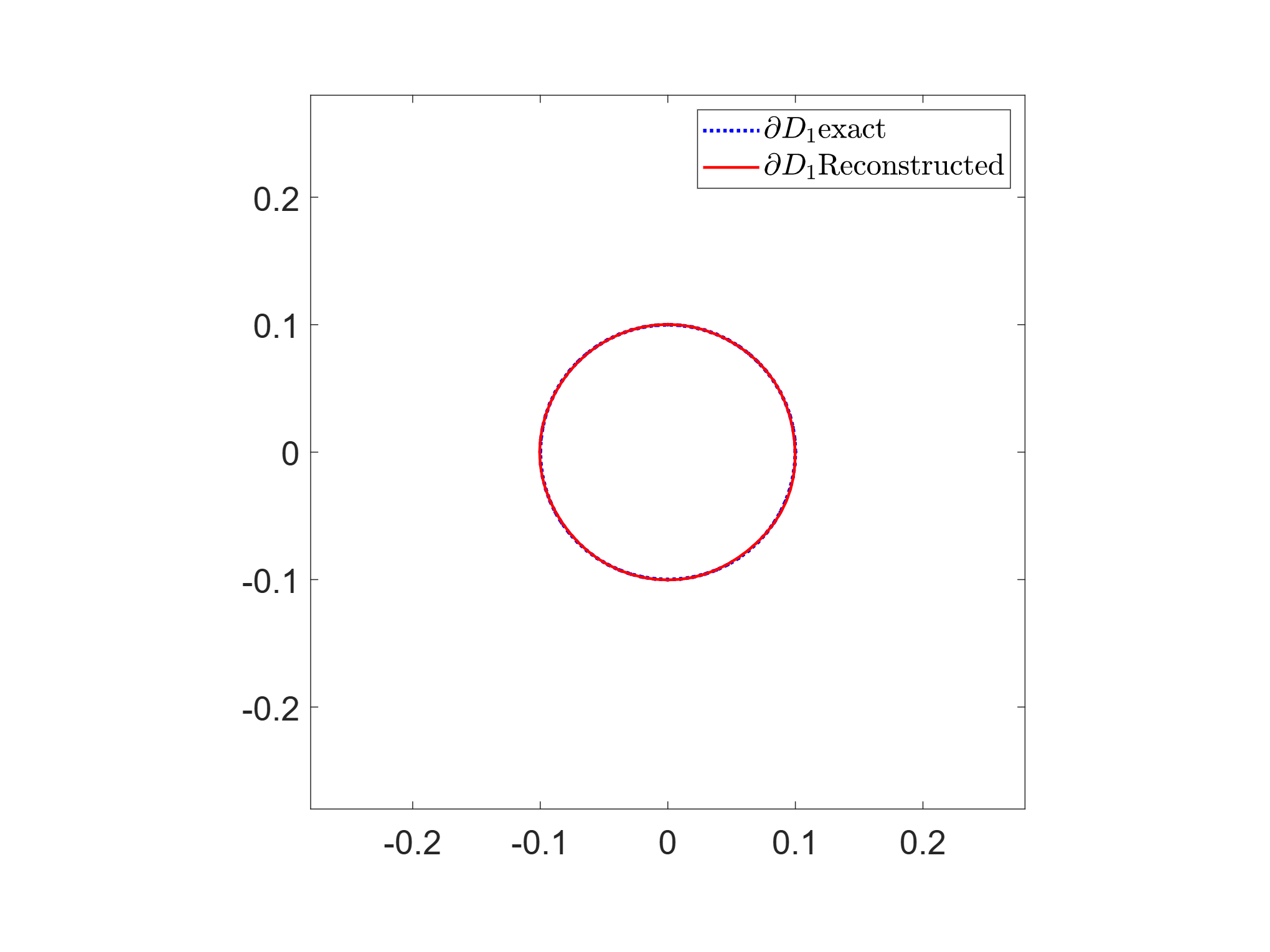}
 }\hspace{-1.4cm}
  {   \includegraphics[width=0.37\textwidth]{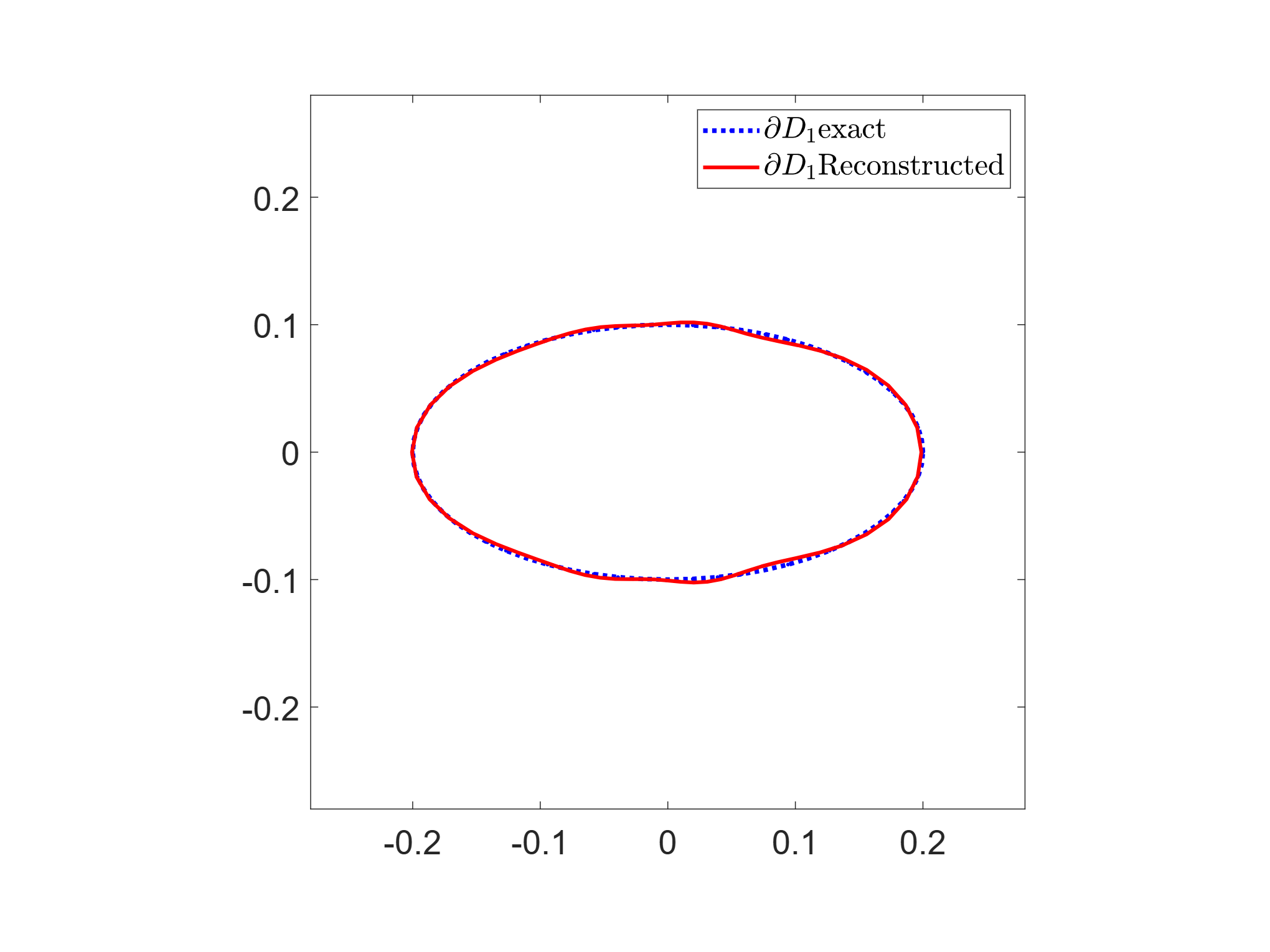}
 }\hspace{-1.4cm}
  {   \includegraphics[width=0.37\textwidth]{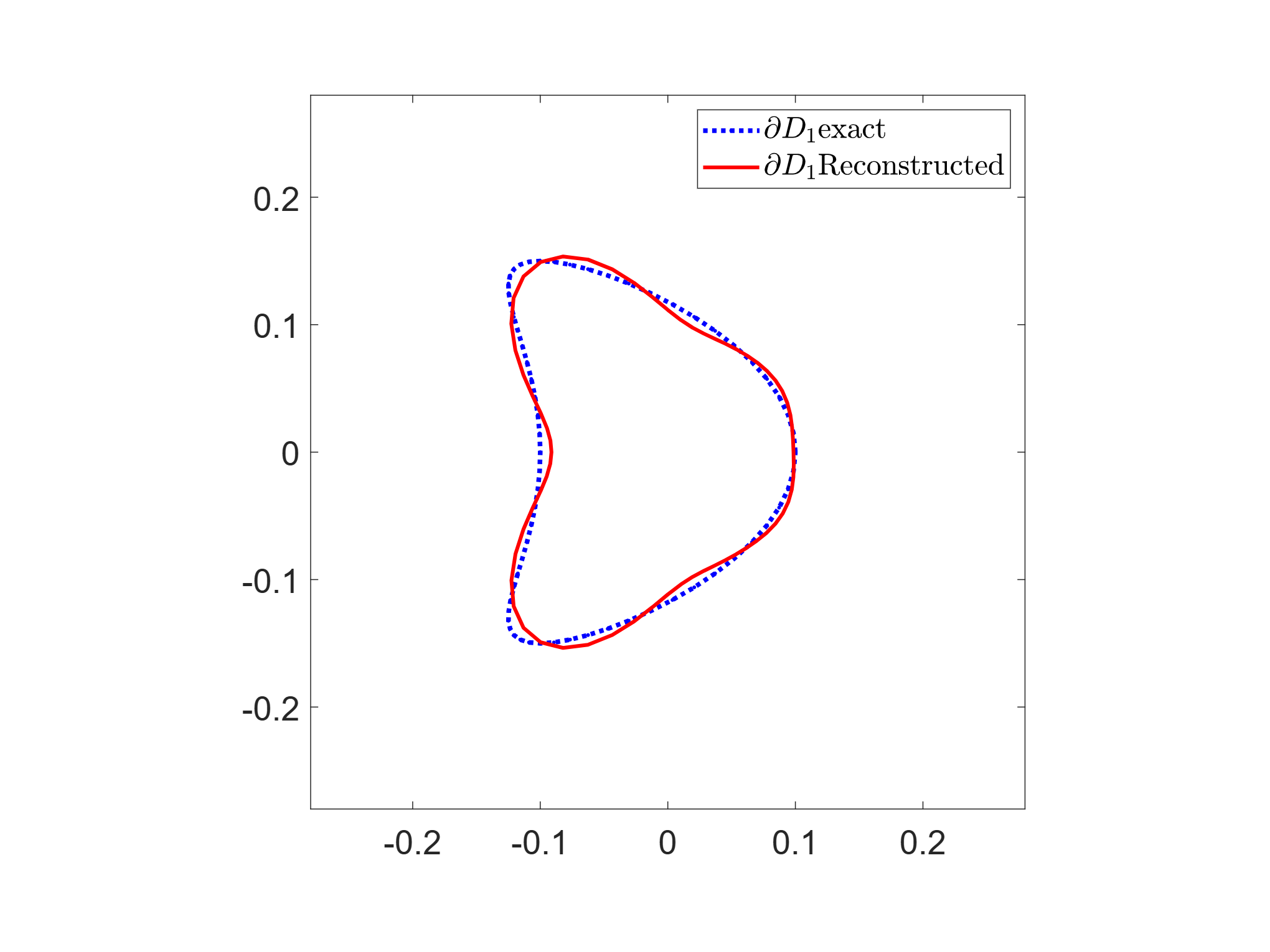}
  }  \\[-0.5cm]
 {   \includegraphics[width=0.37\textwidth]{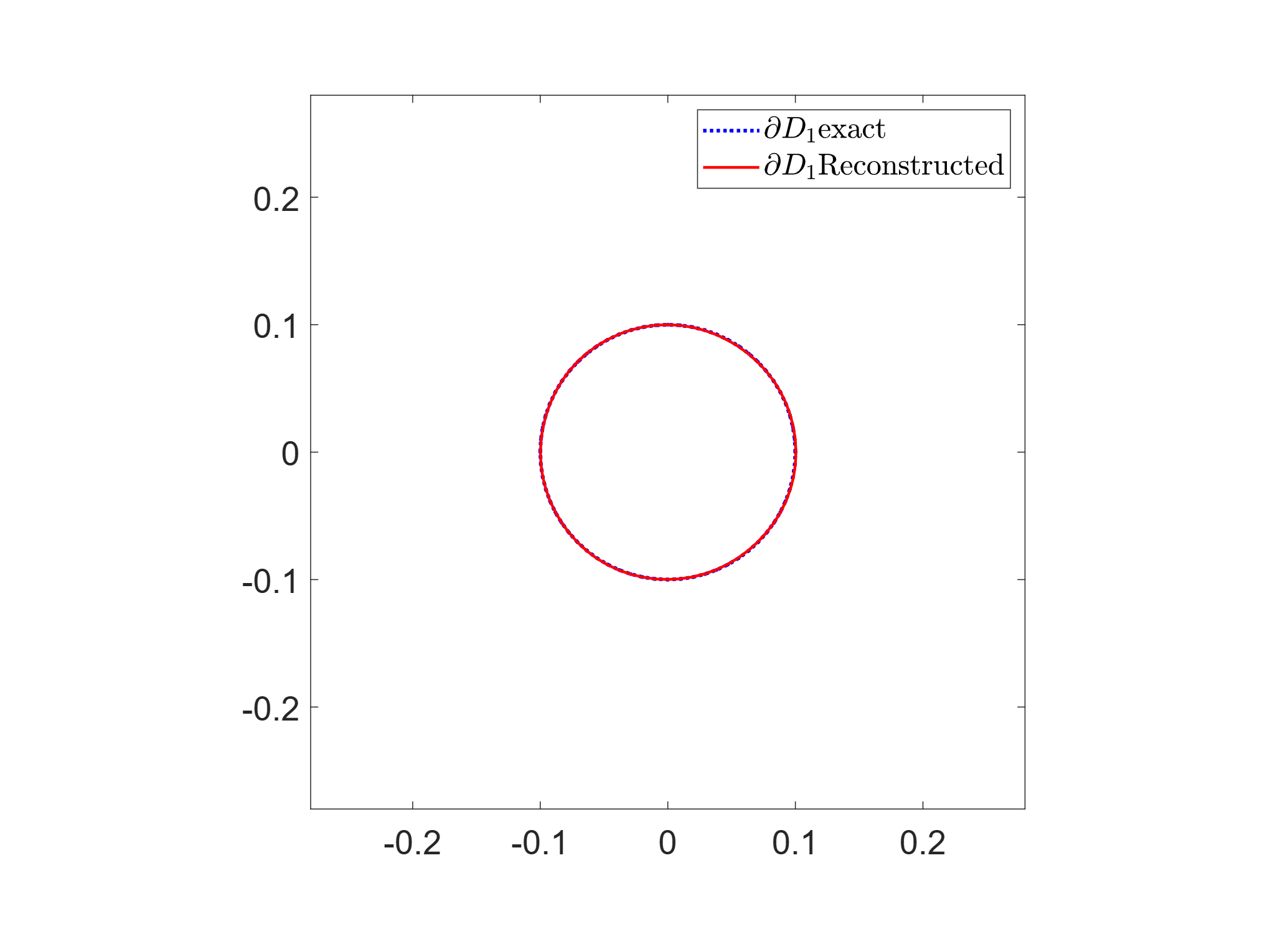}
 }\hspace{-1.4cm}
  {   \includegraphics[width=0.37\textwidth]{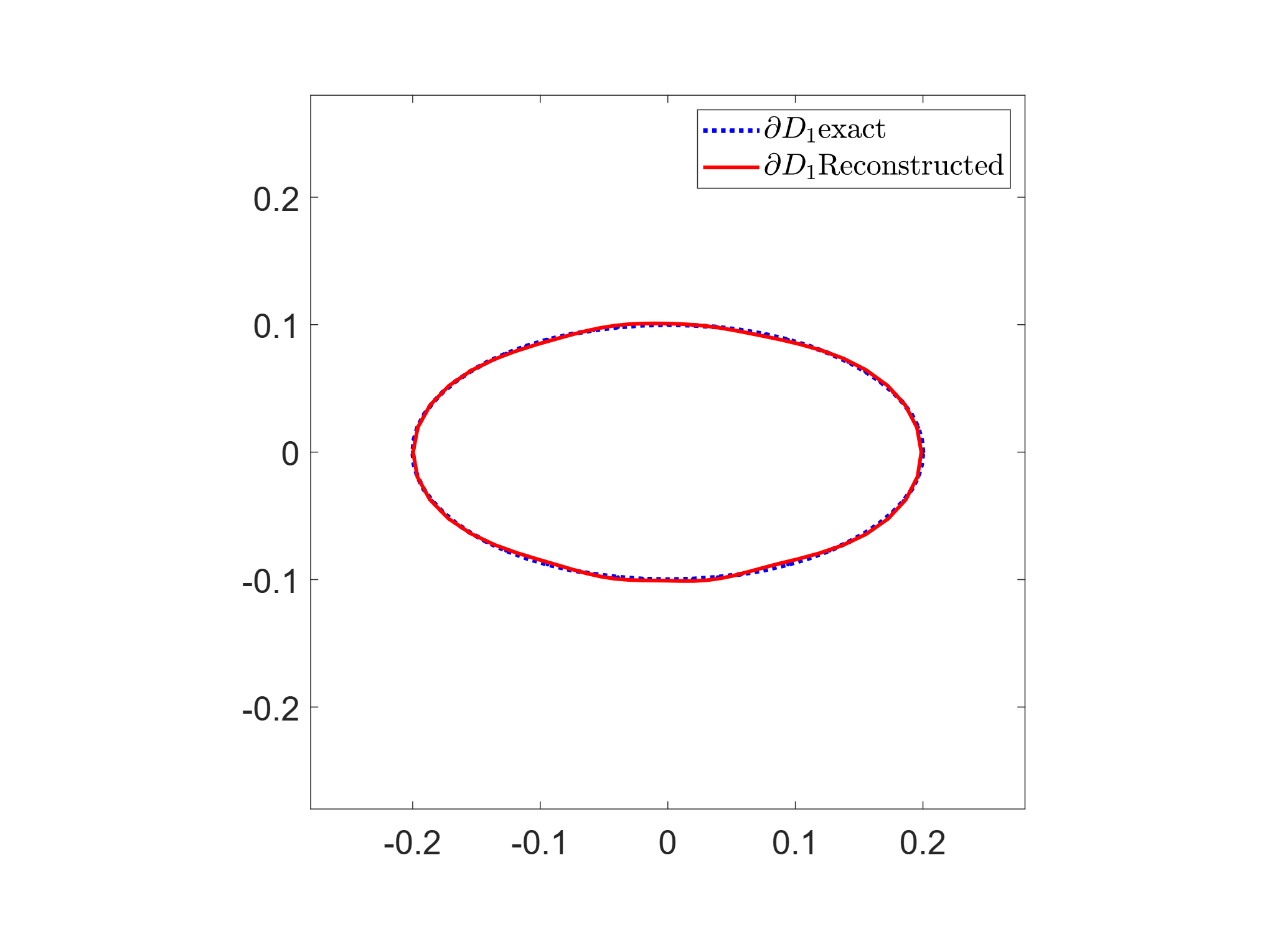}
 }\hspace{-1.4cm}
  {   \includegraphics[width=0.37\textwidth]{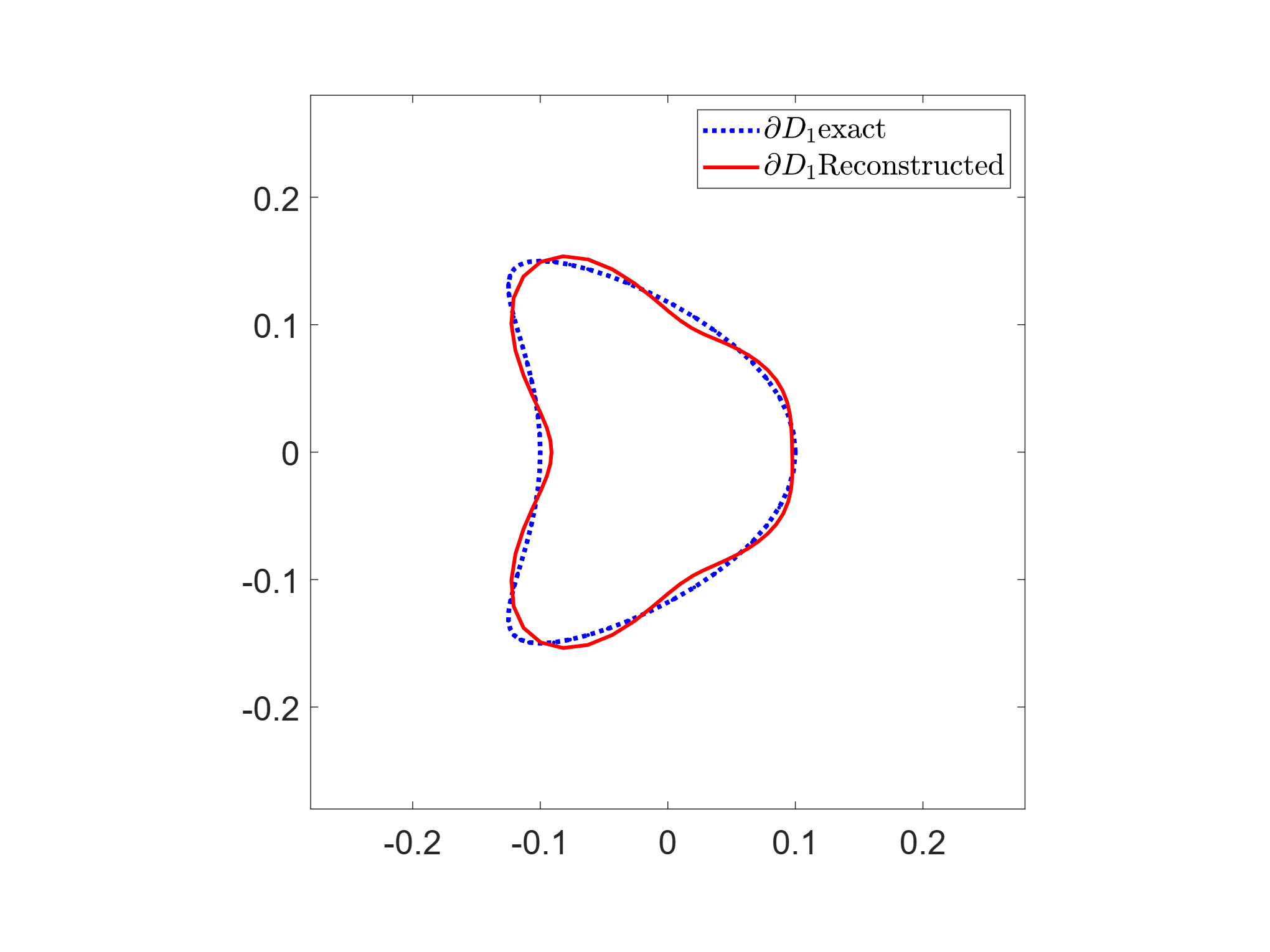}
  }
 \caption{Reconstruction results for $\partial D_{1}$. First row: full-domain method.
  Second row: reduced-domain method.}
 \label{fig:2}
\end{figure}

 Our numerical results are shown in Figure~\ref{fig:2}. The first row presents the results obtained by the full-domain method, while the second row presents the results obtained by the reduced-domain method.
  In Figure~\ref{fig:2}, the exact $\partial D_{1}$ is represented by a blue dotted line, and the reconstruction result is shown as a red curve. The two methods achieve comparable reconstruction accuracy, while the reduced-domain method is significantly more efficient.
  This efficiency gain arises because the reduced-domain method solves for a single density ($O(N\times N)$) per forward solve, whereas the full-domain method solves for three densities ($O(3N\times 3N)$). This advantage is further amplified over multiple iterations. A detailed timing comparison for multi-layer probes is given in the next section.(see Table~\ref{tab:efficiency1}).

\paragraph{Multi-layer probe}
\begin{figure}[htb]
\centering
\begin{tikzpicture}[scale=0.8]

\fill[fill=red!20] (0,0) circle (0.45);

\fill[blue!30, even odd rule] (0,0) circle (0.8) (0,0) circle (0.45);
\draw[black, thin] (0,0) circle (0.8);

\fill[gray!10, even odd rule] (0,0) circle (1.15) (0,0) circle (0.8);
\draw[black, thin] (0,0) circle (1.15);

\fill[blue!30, even odd rule] (0,0) circle (1.5) (0,0) circle (1.15);
\draw[black, thin] (0,0) circle (1.5);

\fill[gray!10, even odd rule] (0,0) circle (1.85) (0,0) circle (1.5);
\draw[black, thin] (0,0) circle (1.85);

\fill[blue!30, even odd rule] (0,0) circle (2.2) (0,0) circle (1.85);
\draw[black, thin] (0,0) circle (1.5);
\draw[black, thick] (0,0) circle (0.45);
\draw[black, thick] (0,0) circle (0.8);
\draw[black, thick] (0,0) circle (1.15);
\draw[black, thick] (0,0) circle (1.5);
\draw[black, thick] (0,0) circle (1.85);
\draw[black, thick] (0,0) circle (2.2);
\node at (-4.5, 0) {Air (background)};

\begin{scope}[shift={(3, 0.5)}]
    \fill[red!20] (0,0) circle (0.18);
    \draw[black, thin] (0,0) circle (0.18);
    \node[right] at (0.25, 0) {$D_2$ (core)};

    \fill[blue!30] (0,-0.6) circle (0.18);
    \draw[black, thin] (0,-0.6) circle (0.18);
    \node[right] at (0.25, -0.6) {Shell (coating, $\varepsilon=-1+i\delta$)};

    \fill[gray!10] (0,-1.2) circle (0.18);
    \draw[black, thin] (0,-1.2) circle (0.18);
    \node[right] at (0.25, -1.2) {Shell (air, $\varepsilon=1$)};
\end{scope}
\end{tikzpicture}
\caption{Schematic of the multi-layer core-shell probe}
\label{fig:multi_layer_geometry}
\end{figure}
We now extend the numerical results to the case of multi-layer probes. The geometry of the multi-layer probe is illustrated in Figure~\ref{fig:multi_layer_geometry}. The probe consists of alternating coating and air shells. An $L$-layer probe has $L$ coating shells and $L-1$ air shells. In this section, the center of the multi-layer probe is located at $(2,0)$. The core $D_{2}$ has radius $0.1$ and permittivity $\varepsilon_{2} = 1$. Each shell (both coating and air) has thickness $0.1$.

To verify whether the multi-layer probe satisfies \Thmref{thm6}, we define $\mathcal{N}_{2} = \norm{u-u_{D_{1}}}_{L^{2}(B_{R})}$ for the multi-layer case. The parameters for $D_{1}$ (radius $0.1$, $\varepsilon_{1} = 1.2$) and the measurement setup ($R = 0.6$, $N = 32$) are the same as in the single-layer case. The results are computed in the limit $\delta \to 0$ and are presented in Figure~\ref{fig:3}. As $\delta \to 0$,
$\mathcal{N}_{2}$ approaches zero for both $L=3$ and $L=5$, showing that the multi-layer probe inherits the property established in \Thmref{thm6} for the single-layer case. Hence, the reduced-domain method remains applicable for reconstructing $\partial D_{1}$ in the presence of multi-layer probe.
\begin{figure}[htbp]
   \centering
   \subfloat[$L=3$]
    {   \includegraphics[width=0.4\textwidth]{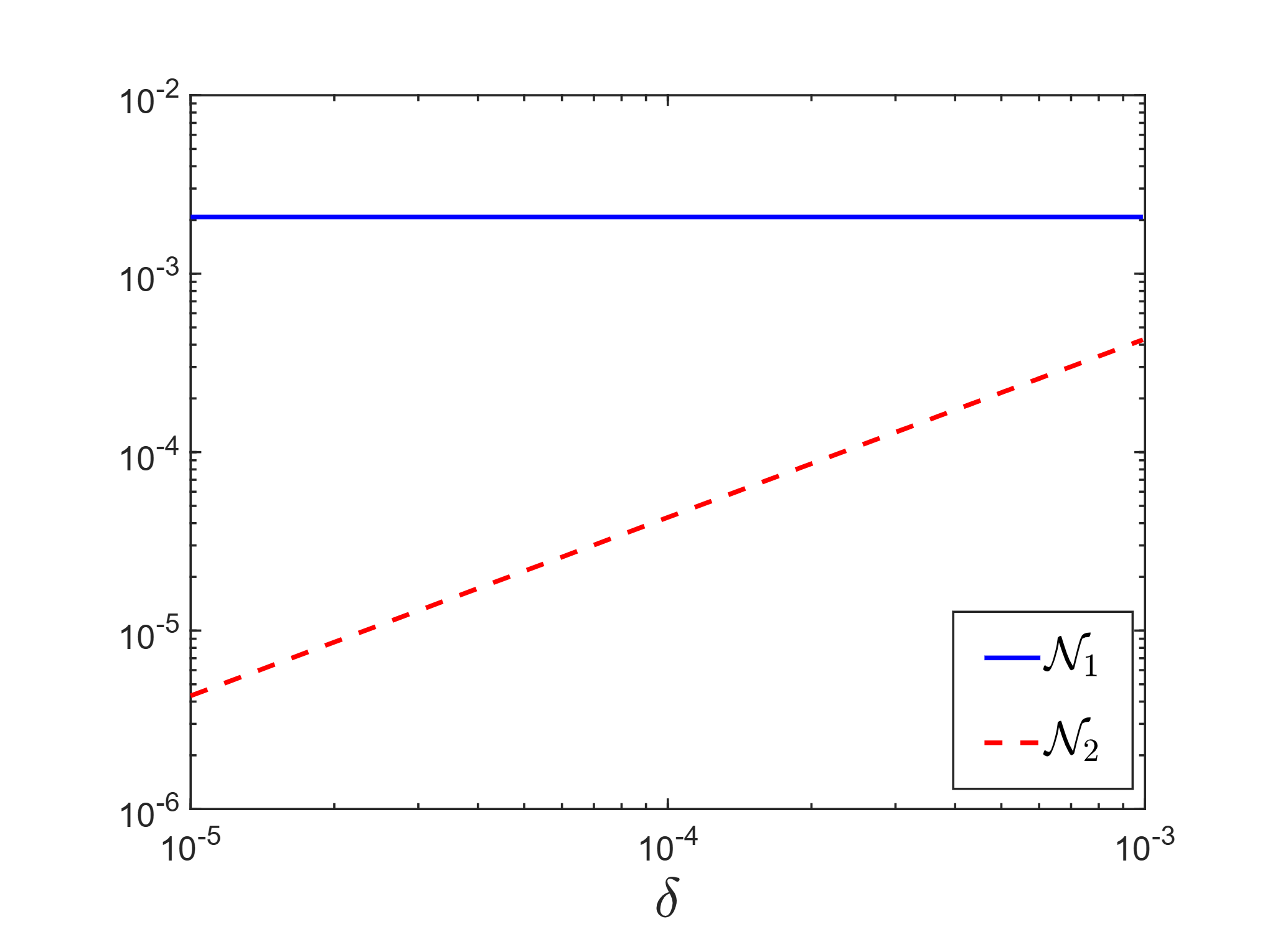}
     }
     \subfloat[$L=5$]
     {   \includegraphics[width=0.4\textwidth]{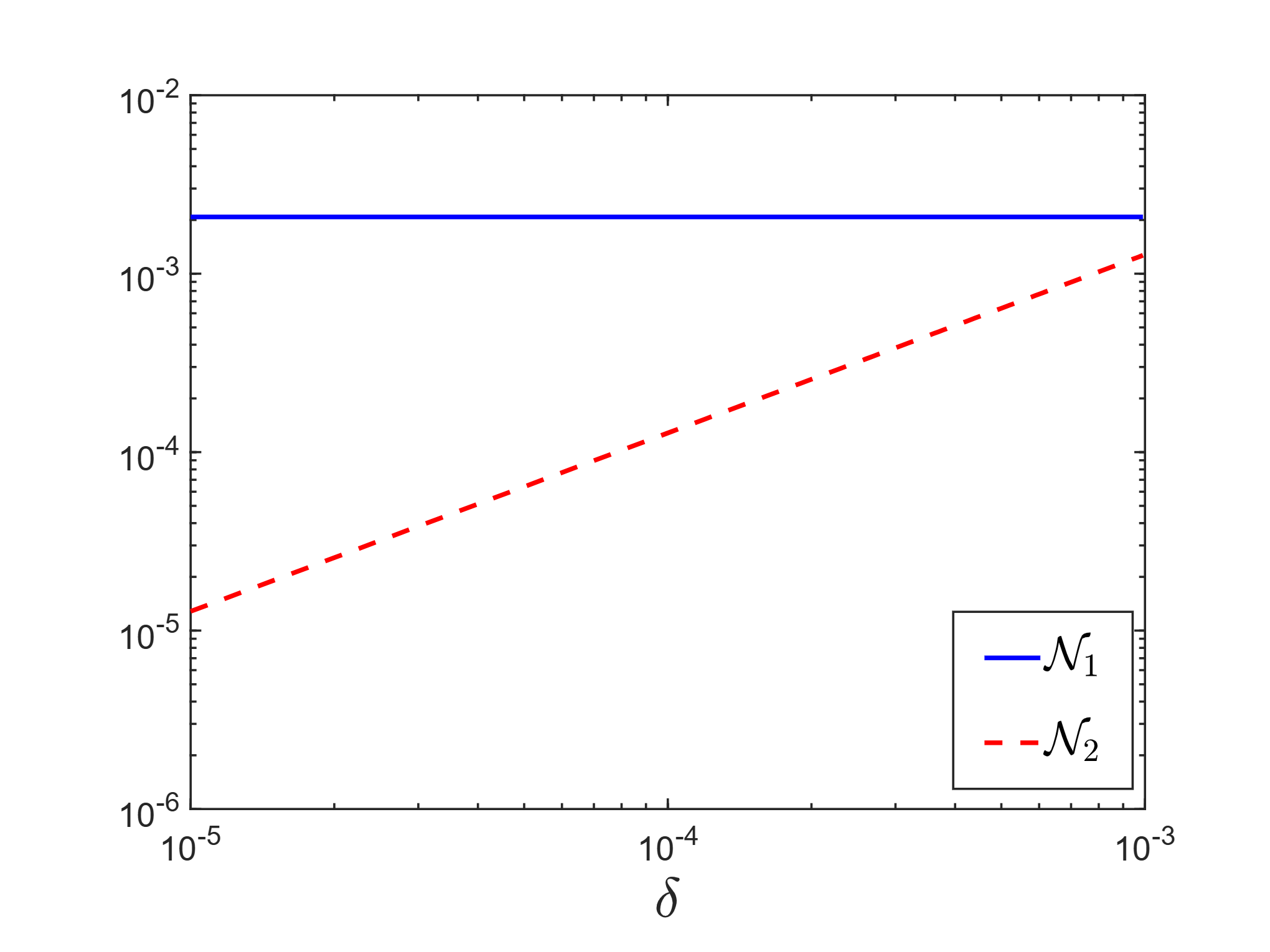}
     }
 \caption{Behavior of $\mathcal{N}_{1}$ and $\mathcal{N}_{2}$ as $\delta \to 0$ for an $L$-layer probe.}
 \label{fig:3}
\end{figure}

We now reconstruct $\partial D_1$ for multi-layer probes using the reduced-domain method. With $\delta=0.001$, we consider circle and kite-shaped targets for $L=3$ and $L=5$, using the same parameters as in Figure~\ref{fig:2}. The results, shown in Figure~\ref{fig:4}, demonstrate that the reduced-domain method achieves high accuracy and is significantly more efficient than the full-domain method.
\begin{figure}[htb]
   \centering

 \subfloat
 {   \includegraphics[width=0.4\textwidth]{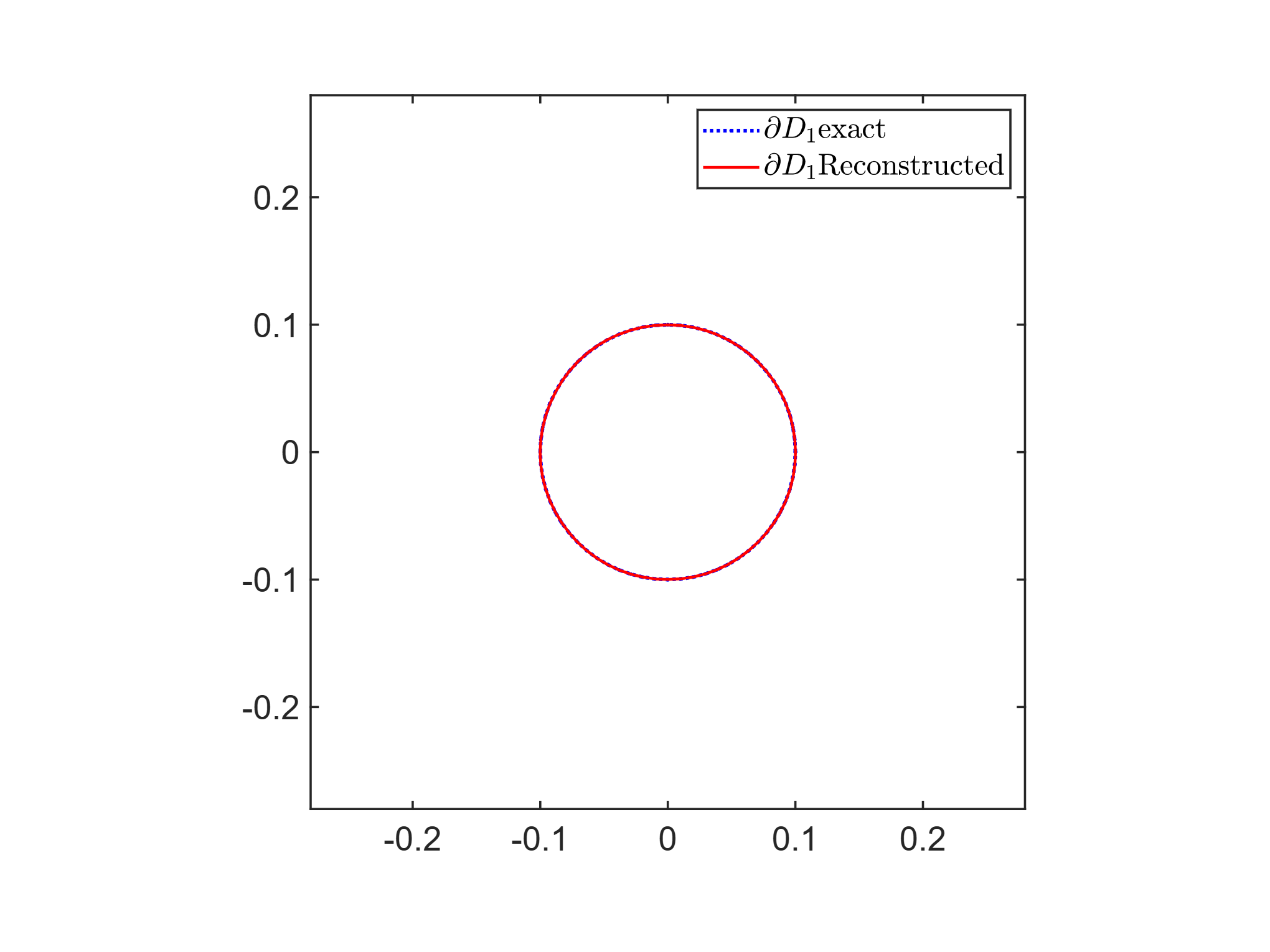}
 }\hspace{-1.5cm}
  {   \includegraphics[width=0.4\textwidth]{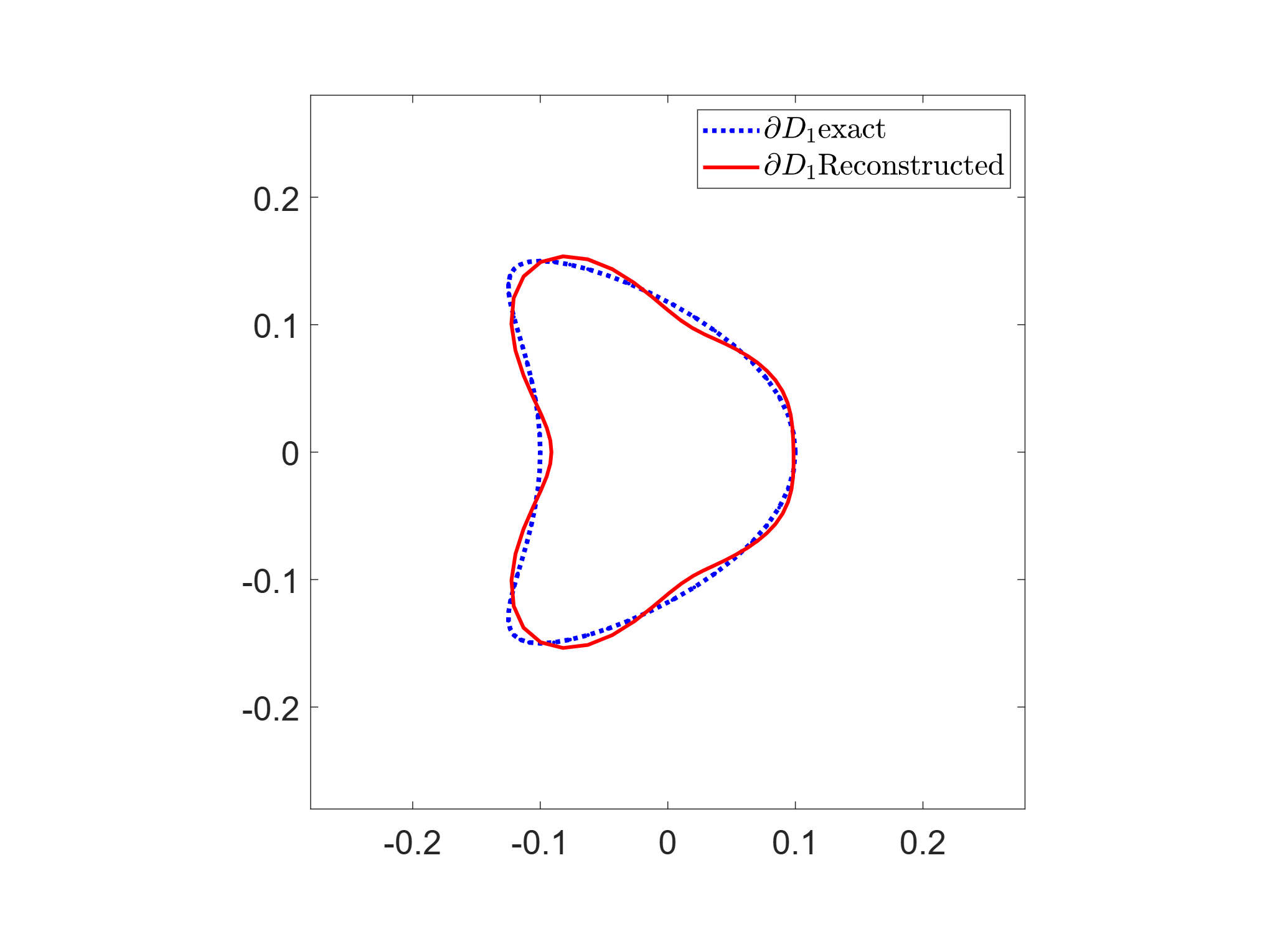}
  } \\ [-0.5cm]
 {   \includegraphics[width=0.4\textwidth]{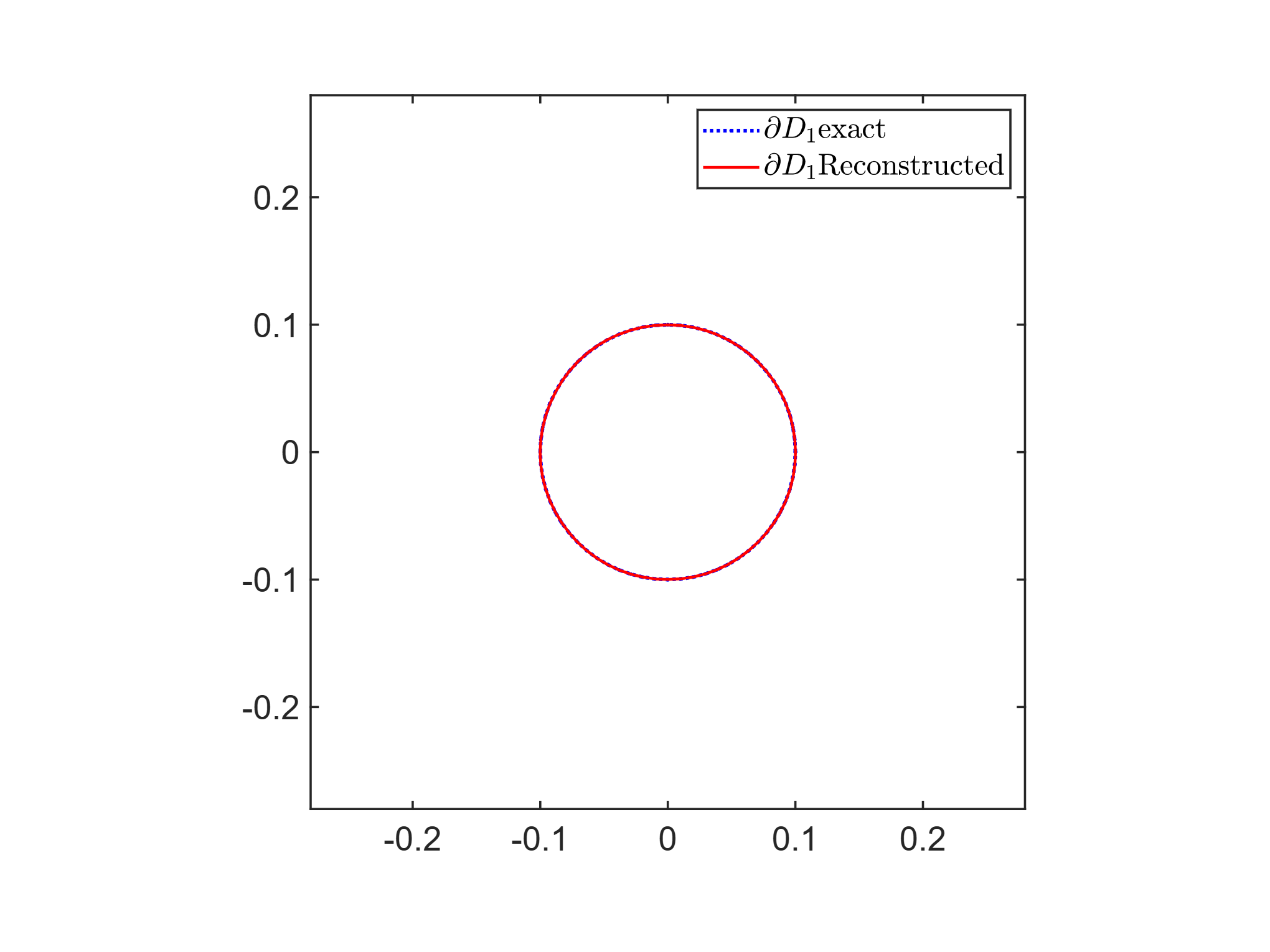}
 }\hspace{-1.5cm}
  {   \includegraphics[width=0.4\textwidth]{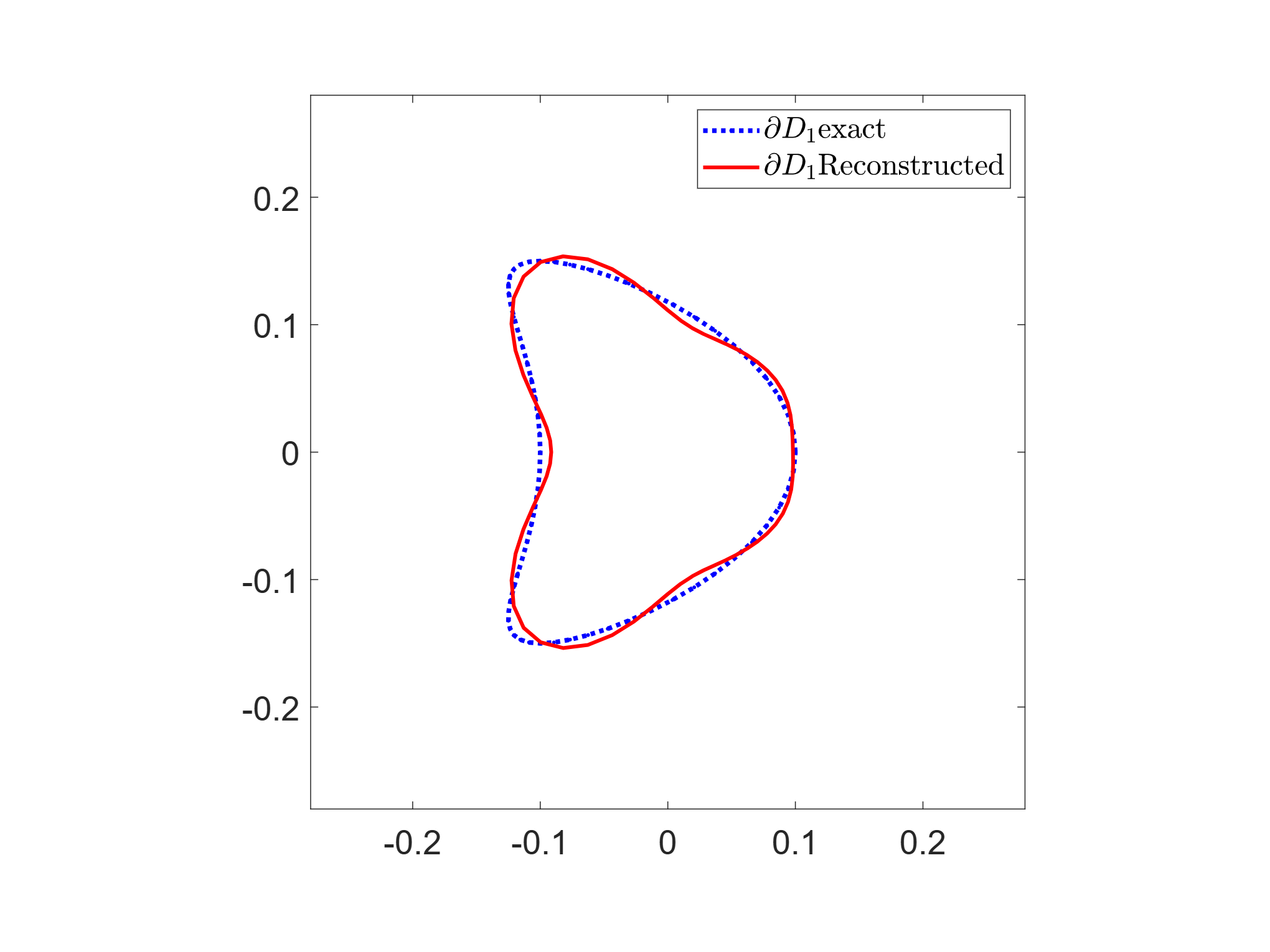}
  }
 \caption{Reconstruction results of the reduced-domain method for $L$-layer
 probe. Top row: $L=3$ (left: circle, right: kite). Bottom row: $L=5$ (left: circle, right: kite).}
 \label{fig:4}
\end{figure}

To quantitatively compare the computational efficiency of the two methods, we set $\delta = 0.001$ and $N = 128$. We consider $L = 1, 3, 5$ and three target shapes: circle, ellipse, and kite. For each configuration, we reconstruct $D_1$ using both the reduced-domain and full-domain methods, and record their computational times as $t_{\text{re}}$ and $t_{\text{fu}}$, respectively. Each experiment is repeated five times, and the average computation time is reported. The detailed efficiency comparison is presented in Table~\ref{tab:efficiency1}.

From Table~\ref{tab:efficiency1}, the reduced-domain method is significantly faster than the full-domain method. Note that the circle uses one set of measurements, while the ellipse and kite use four, which contributes to the longer computation time for the latter. The speedup factor of the reduced-domain method is about 4 for $L=1$, 20--23 for $L=3$, and 54--62 for $L=5$. Importantly, as $L$ increases, $t_{\text{re}}$ remains nearly constant while $t_{\text{fu}}$ grows rapidly, indicating that the advantage of the reduced-domain method grows as the number of probe layers increases.

This efficiency gain comes from the different linear system sizes in each forward solve. The reduced-domain method solves for a single density on $D_1$ ($O(N\times N)$), while the full-domain method solves for $2L+1$ densities ($O((2L+1)N\times(2L+1)N)$). This cost difference is amplified over multiple iterations, yielding the speedups reported in the table.
\begin{table}[htb]
\centering
\setlength{\tabcolsep}{18pt}
\caption{Computational time comparison between the reduced-domain and full-domain methods.}
\begin{tabular}{ccccc}
\toprule
Layers $L$ & Shape & $t_{\text{re}}$ (s) & $t_{\text{fu}}$ (s) & Speedup \\
\midrule
\multirow{3}{*}{1} & Circle & 0.6231 & 2.6481 & 4.25 \\
                   & Ellipse & 2.1319 & 9.8046  & 4.60 \\
                   & Kite & 2.1272 & 9.8169 & 4.61\\
\hline\hline
\multirow{3}{*}{3} & Circle & 0.6199 & 12.4798 & 20.13 \\
                   & Ellipse & 2.1690 & 47.4833 & 21.89 \\
                   & Kite & 2.1048 & 47.8806 & 22.75 \\
\hline\hline
\multirow{3}{*}{5} & Circle & 0.6105 & 33.0766 & 54.18 \\
                   & Ellipse & 2.1498 & 133.3211 & 62.02 \\
                   & Kite & 2.1493 & 133.1206 &  61.94\\
\bottomrule
\end{tabular}
\label{tab:efficiency1}
\end{table}

\paragraph{Multiple Probes}
\begin{figure}[htbp]
\centering
\begin{tikzpicture}[scale=1]

\begin{scope}[shift={(0,0)}]
    \begin{scope}[shift={(-1.0,0)}]
        \draw[fill=green!40, draw=black, thick, scale=0.35]
            plot[domain=0:360, smooth, samples=60]
            ({1.0*cos(\x) + 0.2*cos(2*\x)},
             {0.8*sin(\x) + 0.1*sin(2*\x)});
        \node[font=\small] at (0,0) {$D_1$};
    \end{scope}

    \foreach \y in {-1.0, 0, 1.0} {
        \fill[red!20] (1.6,\y) circle (0.18);
        \fill[blue!30, even odd rule] (1.6,\y) circle (0.32) (1.6,\y) circle (0.18);
        \draw[black, thick] (1.6,\y) circle (0.32);
    }

    \node at (0.3,-2) {\textbf{Case 1}};
\end{scope}

\draw[gray, thick] (3,-2.0) -- (3,2.0);

\begin{scope}[shift={(4.8,0)}]
    \begin{scope}[shift={(-1.0,0)}]
        \draw[fill=green!40, draw=black, thick, scale=0.35]
            plot[domain=0:360, smooth, samples=60]
            ({1.0*cos(\x) + 0.2*cos(2*\x)},
             {0.8*sin(\x) + 0.1*sin(2*\x)});
        \node[font=\small] at (0,0) {$D_1$};
    \end{scope}

    \foreach \y in {1.0, 0} {
        \fill[red!20] (1.6,\y) circle (0.18);
        \fill[blue!30, even odd rule] (1.6,\y) circle (0.32) (1.6,\y) circle (0.18);
        \draw[black, thick] (1.6,\y) circle (0.32);
    }

    \fill[red!20] (1.3,-1.3) circle (0.18);
    \fill[blue!30, even odd rule] (1.3,-1.3) circle (0.32) (1.3,-1.3) circle (0.18);
    \fill[gray!10, even odd rule] (1.3,-1.3) circle (0.46) (1.3,-1.3) circle (0.32);
    \fill[blue!30, even odd rule] (1.3,-1.3) circle (0.60) (1.3,-1.3) circle (0.46);
    \draw[black, thick] (1.3,-1.3) circle (0.60);

    \node at (0.3,-2) {\textbf{Case 2}};
\end{scope}

\draw[gray, thick] (7.8,-2.0) -- (7.8,2.0);

\begin{scope}[shift={(9.6,0)}]
    \begin{scope}[shift={(-1.0,0)}]
        \draw[fill=green!40, draw=black, thick, scale=0.35]
            plot[domain=0:360, smooth, samples=60]
            ({1.0*cos(\x) + 0.2*cos(2*\x)},
             {0.8*sin(\x) + 0.1*sin(2*\x)});
        \node[font=\small] at (0,0) {$D_1$};
    \end{scope}

    \fill[red!20] (1.6,0) circle (0.18);
    \fill[blue!30, even odd rule] (1.6,0) circle (0.32) (1.6,0) circle (0.18);
    \draw[black, thick] (1.6,0) circle (0.32);

    \fill[red!20] (1.3,-1.3) circle (0.18);
    \fill[blue!30, even odd rule] (1.3,-1.3) circle (0.32) (1.3,-1.3) circle (0.18);
    \fill[gray!10, even odd rule] (1.3,-1.3) circle (0.46) (1.3,-1.3) circle (0.32);
    \fill[blue!30, even odd rule] (1.3,-1.3) circle (0.60) (1.3,-1.3) circle (0.46);
    \draw[black, thick] (1.3,-1.3) circle (0.60);

    \fill[red!20] (1.3,1.5) circle (0.18);
    \fill[blue!30, even odd rule] (1.3,1.5) circle (0.32) (1.3,1.5) circle (0.18);
    \fill[gray!10, even odd rule] (1.3,1.5) circle (0.46) (1.3,1.5) circle (0.32);
    \fill[blue!30, even odd rule] (1.3,1.5) circle (0.60) (1.3,1.5) circle (0.46);
    \fill[gray!10, even odd rule] (1.3,1.5) circle (0.74) (1.3,1.5) circle (0.60);
    \fill[blue!30, even odd rule] (1.3,1.5) circle (0.88) (1.3,1.5) circle (0.74);
    \draw[black, thick] (1.3,1.5) circle (0.88);

    \node at (0.3,-2) {\textbf{Case 3}};
\end{scope}

\end{tikzpicture}
\caption{Schematic of the three multi-probe cases.}
\label{fig:multi_probe_setup}
\end{figure}

We now consider the case of multiple probes. Figure~\ref{fig:multi_probe_setup} shows the distribution of the probes in the three cases. Each probe has core radius $0.1$ and each shell has thickness $0.1$. The three probe configurations are as follows:
\begin{itemize}
    \item Case 1: three single-layer probes centered at $(1, -0.5)$, $(1, 0)$ and $(1, 0.5)$.
    \item Case 2: two single-layer probes centered at $(1, 0.5)$ and $(1, 0)$, and one two-layer probe centered at $(0.8, -0.7)$.
    \item Case 3: one single-layer probe centered at $(1, 0)$, one two-layer probe centered at $(0.8, -0.7)$, and one three-layer probe centered at $(0.8, 1)$.
\end{itemize}

The parameters for $D_{1}$ and the measurement setup are the same as those in the previous examples. For the three cases, we have numerically verified that $\mathcal{N}_2 \to 0$ as $\delta \to 0$, confirming that the reduced-domain method remains valid in these multi-probe settings.

\begin{figure}[htbp]
   \centering
   \subfloat[Case 1]
    {   \includegraphics[width=0.36\textwidth]{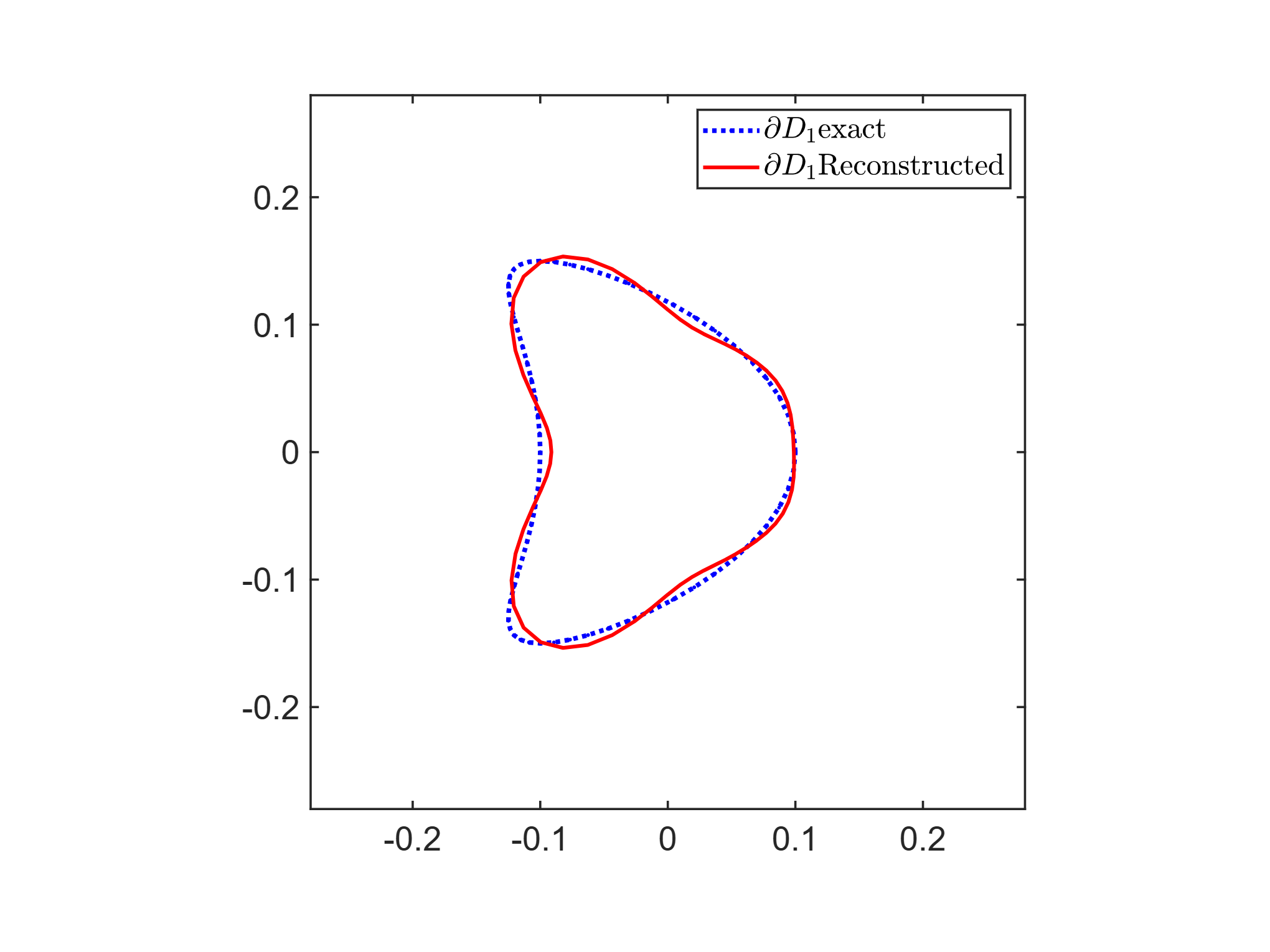}
     } \hspace{-1.4cm}
     \subfloat[Case 2]
     {   \includegraphics[width=0.36\textwidth]{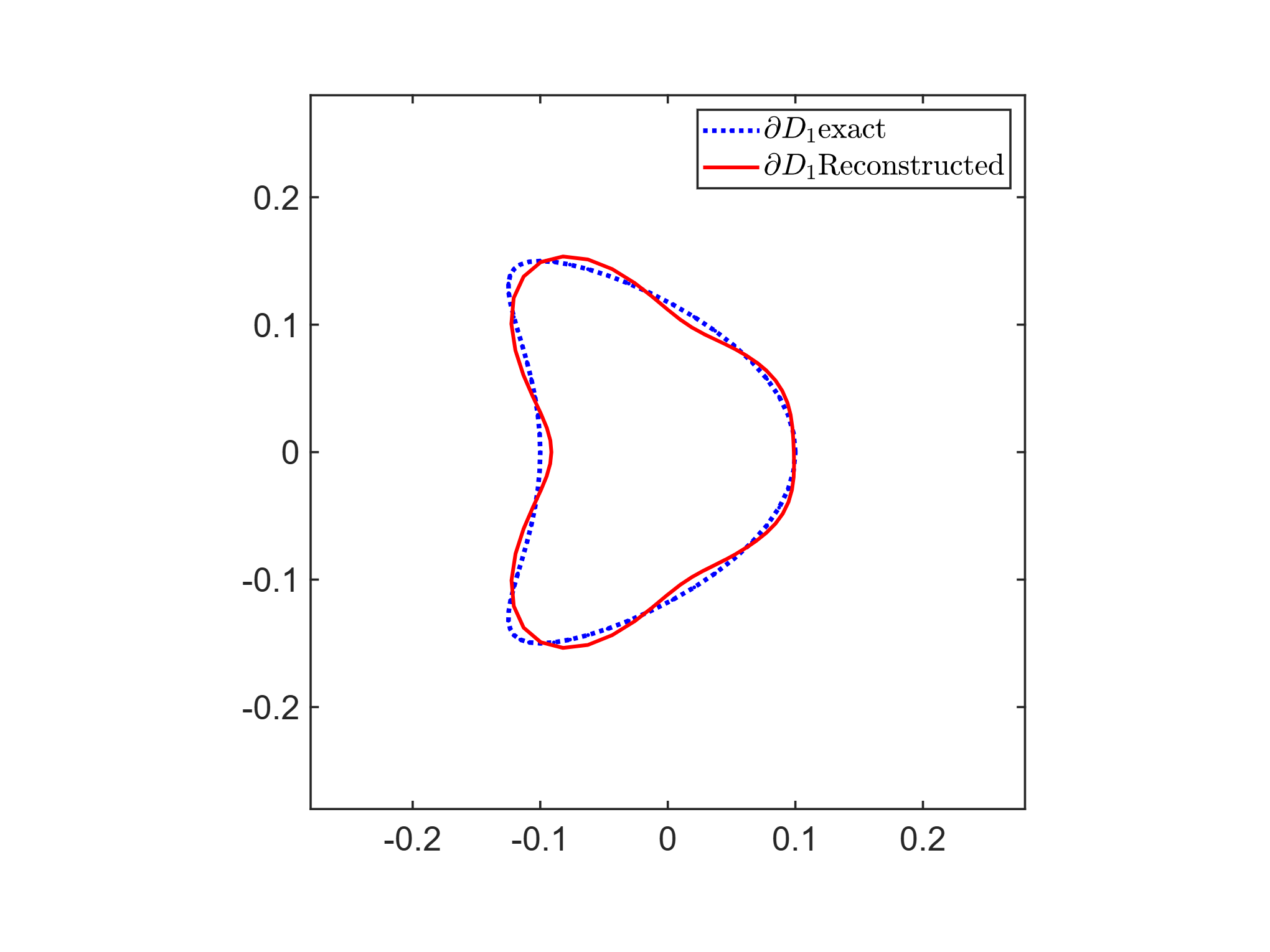}
     } \hspace{-1.4cm}
     \subfloat[Case 3]
     {   \includegraphics[width=0.36\textwidth]{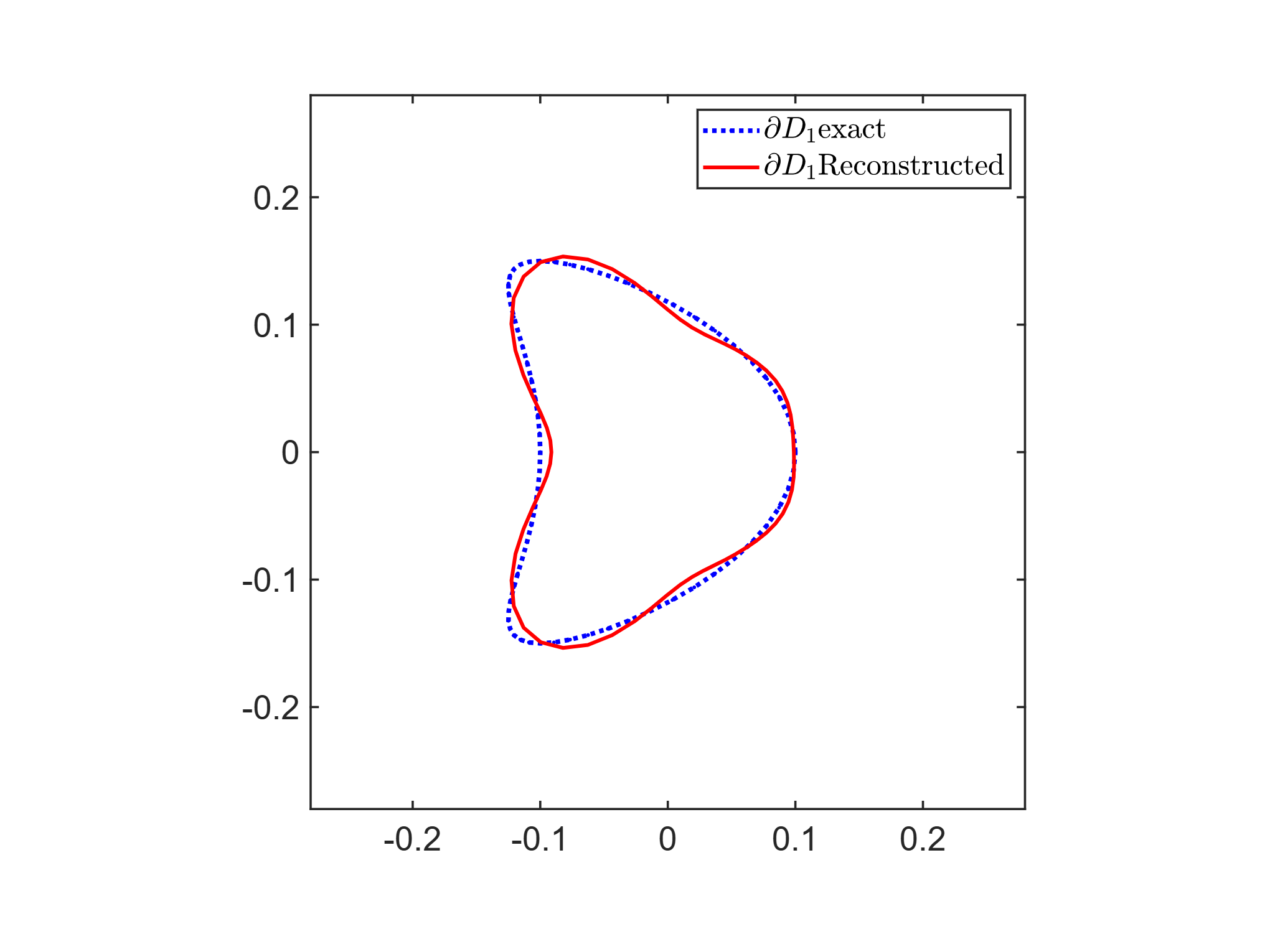}
     }
 \caption{Reconstruction results for the kite-shaped target using the reduced-domain method for the three multi-probe cases.}
 \label{fig:5}
\end{figure}

Figure~\ref{fig:5} presents the reconstruction results for the kite-shaped sample using the reduced-domain method for the three cases. In all cases, the method recovers $D_1$ with high accuracy, confirming its robustness to multiple probes regardless of layer composition. The computational efficiency of the two methods for these three cases is reported in Table~\ref{tab:multi_probe_efficiency}, with all inversion parameters kept the same as in the multi-layer examples.

\begin{table}[htbp]
\centering
\setlength{\tabcolsep}{18pt}
\caption{Computational time comparison for the three multi-probe cases (kite-shaped sample).}
\begin{tabular}{cccc}

\toprule
Case & $t_{\text{re}}$ (s) & $t_{\text{fu}}$ (s) & Speedup \\
\midrule
1 & 2.0449 & 48.6328 & 23.78\\

2 & 2.0218& 84.2347& 41.66\\

3 &2.0032 & 179.9859& 89.85\\
\bottomrule
\end{tabular}
\label{tab:multi_probe_efficiency}
\end{table}

As shown in the table, the speedup increases with probe complexity, from 23.78 in Case 1 to 41.66 in Case 2 and 89.85 in Case 3, confirming that the efficiency gain of the reduced-domain method becomes more pronounced for more complex probes. These results are consistent with the earlier single-probe and multi-layer findings, demonstrating that the computational advantage persists in multi-probe settings..

\section{Conclusion}
In this paper, we reformulate the cloaking problem for a core-shell probe as a minimization problem with respect to the permittivity of the coating shell, and prove the existence of a minimizer. By leveraging localized anomalous resonance, we show that the core-shell probe can be rendered nearly invisible as $\delta \to 0$. This property enables a reduced-domain method for reconstructing the sample $D_1$, which neglects the probe entirely during the shape imaging. Numerical experiments demonstrate that the cloaked probe significantly improves reconstruction accuracy in the small-probe regime, while in the large-probe regime, the reduced-domain method achieves substantial speedups over the full-domain approach with comparable accuracy, and exhibits good scalability with respect to probe complexity and number. Future work includes extensions to more general layered media and more complex probe geometries such as slit-type probes commonly used in NSOM systems.

	
	
	
	\addcontentsline{toc}{section}{References}

\end{document}